\let\oldll=\ll
\let\oldgg=\gg
\let\oldr@@t\r@@t
\def\r@@t#1#2{%
\setbox0=\hbox{$\oldr@@t#1{#2\,}$}\dimen0=\ht0
\advance\dimen0-0.2\ht0
\setbox2=\hbox{\vrule height\ht0 depth -\dimen0}%
{\box0\lower0.4pt\box2}}
\renewcommand{\ll}{\oldll}
\renewcommand{\gg}{\oldgg}
\newcommand{\s}[1]{$\smash{#1}$}
\newcommand{\ee}{e}
\renewcommand{\vec}[1]{\underline{\smash{#1}}}
\DeclareMathOperator{\wgt}{wgt}
\DeclareMathOperator{\len}{len}
\DeclareMathOperator{\range}{range}
\newcommand{\W}{\mathscr{W}}
\newcommand{\LL}{\mathscr{L}}
\newcommand{\CC}{\mathscr{C}}
\newcommand{\Exp}{\operatorname{\textsc{Exp}}}
\newcommand{\mw}{\bar{c}}
\newcommand{\tZ}{\bar{Z}}
\begin{document}

\vspace*{-12pt}
\title{Supercritical minimum mean-weight cycles}
\author[J.~Ding]{Jian Ding$^*$}
\author[N.~Sun]{Nike Sun$^\dagger$}
\author[D.~B.~Wilson]{David B. Wilson}

\thanks{Research supported in part by $^*$NSF grant DMS-1313596 and $^\dagger$NSF MSPRF grant DMS-1401123}

\address{Statistics Department\\University of Chicago\\Chicago, IL 60637}
\address{Microsoft Research\\
One Memorial Drive\\Cambridge, MA 02142 --and--
\newline\indent
Mathematics Department,
Massachusetts Institute of Technology,
Cambridge, MA 02139}
\address{Microsoft Research\\One Microsoft Way\\Redmond, WA 98052}

\begin{abstract}
We study the weight and length of the minimum mean-weight cycle in the stochastic mean-field distance model, i.e., in the complete graph on \s{n} vertices with edges weighted by independent exponential random variables. Mathieu and Wilson showed that the minimum mean-weight cycle exhibits one of two distinct behaviors, according to whether its mean weight is smaller or larger than \s{1/(ne)}; and that both scenarios occur with positive probability in the limit \s{n\to\infty}. If the mean weight is \s{< 1/(ne)}, the length is of constant order. If the mean weight is \s{> 1/(ne)}, it is concentrated just above \s{1/(n\ee)}, and the length diverges with \s{n}. The analysis of Mathieu--Wilson gives a detailed characterization of the subcritical regime, including the (non-degenerate) limiting distributions of the weight and length, but leaves open the supercritical behavior. We determine the asymptotics for the supercritical regime, showing that with high probability, the minimum mean weight is \s{(n\ee)^{-1}[1 + \pi^2/(2 \log^2 n) + O((\log n)^{-3})]}, and the cycle achieving this minimum has length on the order of \s{(\log n)^3}.
\end{abstract}

\maketitle

\section{Introduction} Given a directed or undirected graph with edge weights, a \emph{minimum mean-weight cycle} (\textsc{mmwc}) is any cycle that minimizes the mean weight (ratio of total weight to cycle length) over all cycles in the graph. Finding an \textsc{mmwc} is a fundamental subproblem to a wide variety of algorithms: for example, they have been used (\cite{goldberg-tarjan,radzik-goldberg}, building on \cite{klein}) to give a strongly polynomial-time algorithm for the minimum-cost circulation problem, of which the maximum flow problem is a special case. Other applications of \textsc{mmwc}'s include algorithms for multicommodity flow~\cite{ouorou-mahey} and asymmetric travelling salesman tours~\cite{kleinberg-williamson}; for further applications see the extensive discussion in~\cite{dasdan}.

Several \textsc{mmwc} algorithms are available (see \cite{dasdan,georgiadis} and references therein); and it is of practical interest to understand their runtime in ``average-case'' settings, that is to say, on random inputs. Experimental studies \cite{DG98, DG99, dasdan, georgiadis} suggest that an algorithm due to Young--Tarjan--Orlin \cite{YTO} (based on an improvement of a parametric shortest-path algorithm~\cite{KO81}) has the best runtime in standard random graph ensembles, where it substantially outperforms its worst-case theoretical guarantees.

Motivated by the empirical studies, a natural direction is to understand the typical behavior of the \textsc{mmwc} in random graphs. With this in mind, Mathieu--Wilson \cite{mathieu-wilson} study the \textsc{mmwc} in the stochastic mean-field distance model: the complete graph, or complete digraph, with i.i.d.\ random edge weights. This is a canonical setting for the study of combinatorial optimization problems, both in the mathematics and physics literature:  other examples include minimum spanning tree \cite{MR770868,MR1054012}, shortest path \cite{MR770869,MR1723648,MR2309622,MR2433939}, traveling salesman \cite{mezard-parisi-tsp,MR2104159,MR2600434}, assignment \cite{mezard-parisi-replicas-opt,MR1839499,MR2036492,MR2178256}, spanners \cite{MR2551020}, and Steiner tree \cite{MR2071332,MR2927630}. The analysis of \cite{mathieu-wilson} uncovers an unusual dichotomy for the \textsc{mmwc} in the stochastic mean-field distance model, and characterizes the subcritical regime. In this work we complement their analysis by characterizing the supercritical regime.

\subsection{Main result} Consider the stochastic mean-field distance model where each edge is independently weighted by an exponential random variable of unit rate.  In this setting we study the minimum mean-weight cycle, which is unique with probability one. Denote its mean weight by \s{\W_n}, and its length by \s{\LL_n}, so the cycle has total weight \s{\LL_n\W_n}. This model exhibits an unusual phase transition \cite{mathieu-wilson} at \s{n\W_n=1/e}: In the \emph{subcritical\/} regime \s{n\W_n<1/e}, \s{n\W_n} is non-concentrated, and \s{\LL_n} stays bounded; moreover the limiting distributions of \s{n\W_n} and \s{\LL_n} are precisely characterized. In the \emph{supercritical\/} regime \s{n\W_n>1/e}, \s{n\W_n} is very concentrated with \s{n\W_n-1/e\to0} in probability, while \s{\LL_n} diverges with \s{n}, at least on the order of \s{(\log n)^2\log\log n}. The conclusions of \cite{mathieu-wilson} are much less precise in the supercritical case, leaving open the asymptotic order of \s{n\W_n-1/e} and \s{\LL_n}.

In this paper we characterize the asymptotics of \s{\W_n} and \s{\LL_n} in the supercritical regime.
In particular, our results confirm simulations done by Uri Zwick and the last author
which suggested that \s{\LL_n\asymp(\log n)^3}. Our main theorem is as follows:

\begin{theorem}\label{t:main}
In the complete graph or complete digraph with i.i.d.\ unit-rate exponential edge weights, let \s{\W_n} and \s{\LL_n} be the mean weight and length of the minimum mean-weight cycle, and write 
\beq
c_\star(n) \equiv e^{-1}[1+\pi^2/[2(\log n)^2]]\,.
\eeq
For all \s{\ep>0} there exists a constant \s{C=C(\ep)>0} such that
	\beq\label{e:main.len.wt}
	\liminf_{n\to\infty}
	\P\left( \hspace{-4pt} \left.
	\begin{array}{c}
	|n\W_n-c_\star(n)|(\log n)^3 \le C\\
	\text{and }
	1/C \le \LL_n/(\log n)^3\le C \\
	\end{array}
	\,\right|\, n\W_n >\ee^{-1}
	\hspace{-4pt} \right) \ge 1-\ep.
	\eeq
These bounds are optimal in the sense that
for any interval \s{I} with length \s{|I|\leq 1/C},
	\beq\label{e:main.nondegenerate}
	\limsup_{n\to\infty}
	\P\left(\hspace{-4pt}
	\left.
	\begin{array}{c}
	[n\W_n-c_\star(n)](\log n)^3
	\in I\\
	\text{or }\LL_n/(\log n)^3 \in I
	\end{array}
	\,\right|\, n\W_n >\ee^{-1}
	\hspace{-4pt}\right) \le\ep.
	\eeq
\end{theorem}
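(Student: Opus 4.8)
The plan is to work with the threshold (tropical eigenvalue) characterization of the minimum mean weight: $n\W_n$ is the largest $c$ for which the complete graph (or digraph) carrying the shifted weights $n w_e - c$ has no cycle of negative total shifted weight, equivalently the largest $c$ admitting a potential $\phi\colon V\to\mathbb R$ with $\phi(v)-\phi(u)\le n w_{uv}-c$ for every edge $uv$. For $c$ near $1/e$ one analyzes, from a fixed vertex, the Dijkstra-type exploration along light edges; in both models its local structure is governed by a near-critical branching random walk in discrete time indexed by the number of hops, each particle having $\approx n$ offspring with displacements distributed as the shifted weights $w_e - c/n$. This is the cycle-aware, finite-$n$ analogue of the branching structure behind the subcritical analysis of Mathieu--Wilson. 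A many-to-one computation shows that the expected number of negative closed walks of length $\ell$ grows like $(ce)^\ell$ up to polynomial factors, which recovers the critical value $c=1/e$; all the refinements in Theorem~\ref{t:main} come from the fact that a realizable negative closed walk cannot stray far from the critical trend $j/e$ in the shifted-weight (equivalently, potential) coordinate, because the graph has only $n$ vertices. Precisely, this is the regime of a branching random walk run near criticality with absorption: the walk is confined to a strip whose width $R$ is the one at which the process carries $\Theta(n)$ particles, one per vertex, namely $R\asymp\log n$ (consistent with $K_n$ having weighted diameter of order $\log n/n$).

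Granting the confinement to a strip of width $R\asymp\log n$, the surviving part of the first moment is discounted by the probability that a length-$\ell$ bridge stays inside the strip, which for $\ell\gg R^2$ decays like $\exp(-\lambda_1\ell)$ with $\lambda_1$ the principal Dirichlet eigenvalue of the linearized diffusive generator, i.e.\ the ground-state energy $\tfrac12(\pi/R)^2$ corrected by the increment variance in the relevant exponential tilt; this works out to $\lambda_1=[1+o(1)]\,\pi^2/(2(\log n)^2)$ in the units in which the per-hop growth exponent is $\log(ce)\approx e(c-1/e)$. Thus $\sum_\ell (ce)^\ell\,\ell^{-O(1)}e^{-\lambda_1\ell}$ is $o(1)$ exactly when $e(c-1/e)<\lambda_1(1-o(1))$, i.e.\ when $c<c_\star(n)(1-O((\log n)^{-3}))$; a union bound over the $n$ starting vertices then gives $n\W_n\ge c_\star(n)(1-O((\log n)^{-3}))$. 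The same balance forces $\ell\,[\log(ce)-\lambda_1]$ to stay bounded away from zero for any negative closed walk of length $\ell$, hence $\ell\gtrsim R^3\asymp(\log n)^3$ (a strip budget of order $R$ spent at rate of order $\lambda_1$ per hop), so $\LL_n\ge(\log n)^3/C$.

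For the matching upper bounds I would run a second-moment argument over a truncated family: closed walks through a fixed vertex of length $\ell\asymp(\log n)^3$ that stay inside the strip and close up with negative total shifted weight. The strip confinement is precisely what tames the pair-correlation terms, since it suppresses the configurations in which two walks branch apart and later rejoin, so that the second moment is comparable to the square of the first; then Paley--Zygmund together with a union over starting vertices produces such a cycle with high probability, giving $n\W_n\le c_\star(n)(1+O((\log n)^{-3}))$ and $\LL_n\le C(\log n)^3$. Combined with the previous paragraph this is \eqref{e:main.len.wt}. The optimality assertion \eqref{e:main.nondegenerate} comes out of the same picture: conditioned on supercriticality, the recentered pair $\big([n\W_n-c_\star(n)](\log n)^3,\ \LL_n/(\log n)^3\big)$ does not concentrate, because the construction realizes a number of near-optimal cycles of constant order with a non-degenerate limiting law (equivalently, the minimum of the confined near-critical branching random walk, recentered on the scale $1/\lambda_1\asymp(\log n)^3$, converges to a non-degenerate distribution), so for $C$ large no interval of length $1/C$ can carry more than a $1-\ep$ fraction of the mass of either coordinate.

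I expect the main obstacle to be the structural confinement step, which is the genuinely finite-$n$ ingredient, invisible on the branching tree: it amounts to showing, quantitatively and uniformly in the threshold, that the subgraph of edges of shifted weight below a given level is too sparse (too tree-like) to carry a cycle whose weight profile wanders far from the critical trend. A close second is propagating the exact constant $\pi^2/2$, rather than merely its order of magnitude, through the exploration: one must verify that the branching random walk is in its near-critical diffusive window over the $\asymp(\log n)^3$ relevant hops, so that the bottom eigenvalue of a one-dimensional Laplacian on an interval of length $\asymp\log n$, with the correct constant, governs the answer. This requires ballot/barrier-type estimates for the confined walk and a quantitative central limit theorem for the tilted displacements, both uniform across the strip, and pinning the exact power of $\log n$ in $\LL_n$ (rather than spurious $\log\log n$ factors) requires a conditional, spine-type refinement rather than a crude second moment.
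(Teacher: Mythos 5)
Your outline correctly identifies the phenomenology (a confinement scale near $\log n$, a Dirichlet-eigenvalue penalty $\pi^2/(2(\log n)^2)$, and the resulting length scale $(\log n)^3$), but the two steps you yourself flag as obstacles are exactly the substance of the proof, and your plan supplies no mechanism for either. First, the confinement. The paper does not prove that optimal cycles are confined to a strip; instead it decomposes: any $c$-light cycle is either $A$-uniform or contains a subpath whose bridge drops by $A+O(1)$ while staying uniform (Lemma~\ref{l:many.subpaths}), and the expected number of such subpaths is $\lesssim n e^{-A}(ce\lm_A)^\ell/A^3$ (Lemma~\ref{l:unif.paths}). The factor $n e^{-A}$ is the finite-$n$ input, and it pins the strip width to $\log n + O(1)$ --- not merely $\asymp\log n$ as in your proposal. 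This precision is not cosmetic: an uncertainty of a constant factor in the strip width shifts the eigenvalue at order $(\log n)^{-2}$, which would swamp the $(\log n)^{-3}$ window in \eqref{e:main.len.wt}; likewise your ``$\lambda_1=[1+o(1)]\pi^2/(2(\log n)^2)$'' must be sharpened to relative error $O(1/\log n)$, which the paper obtains from the Lambert-$W$ computation of $\lm_A$ in Section~\ref{s:eig} together with the range-restricted bridge estimate $R^k_A\asymp(\lm_A)^k k^{3/2}/A^3$ (Lemma~\ref{l:bridge}). Your ``one particle per vertex'' heuristic, and the potential/tropical-eigenvalue reformulation, do not by themselves produce any of these quantitative inputs.

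Second, the upper bound on $\LL_n$ and the second moment. Your second-moment step would only give $n\W_n\le c_\star+O((\log n)^{-3})$ and existence of a cycle of length $\asymp(\log n)^3$; it does not bound $\LL_n$ above, because the \textsc{mmwc} could be a much longer cycle with mean weight in the same window. For weights slightly above $c_\circ$ one has $ce\lm_{A_\circ}>1$, so the first moment over confined (uniform) cycles does not decay in $k$ and cannot rule out long cycles; the paper handles this with a genuinely involved three-way case analysis (Lemmas~\ref{p:too.unif},~\ref{c:too.non.unif},~\ref{l:light.and.too.long}), the hard case being cycles that are $(\log n+\Delta)$-uniform but not $(\log n-\Delta)$-uniform, treated via an up/down-crossing decomposition; nothing in your plan addresses this. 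Moreover, your claim that strip confinement alone ``tames the pair-correlation terms'' is essentially the uniformity device already present in \cite{mathieu-wilson,ding}, and at the precision required here (a weight window of relative width $1/k\asymp(\log n)^{-3}$, needed also for the non-degeneracy statement \eqref{e:main.nondegenerate}) the paper restricts further to cycles with a typical local-time profile (Definition~\ref{d:good}(iii), via Lemma~\ref{l:expected.profile}) in order to make the overlap contribution $o_\Delta(1)$ in Lemma~\ref{l:conditional}; asserting Paley--Zygmund ``since rejoining is suppressed'' is not an argument at that scale. Finally, \eqref{e:main.nondegenerate} in the paper follows from first-moment bounds over short intervals, not from a claimed non-degenerate limit law for the minimum of a confined branching random walk, which you state without proof.
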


\noindent
Theorem~\ref{t:main} implies that in the supercritical regime \s{n\W_n>1/e}, the random variables \s{\log[\LL_n/(\log n)^3]} and \s{(\log n)^3[n\W_n-c_\star]} are tight, with non-degenerate distributions.

\subsection{Proof ideas}
In the remainder of this introductory section we highlight some of the main new ideas in our proof, which allow us to overcome obstacles in the analysis of \cite{mathieu-wilson}. We remark that very similar obstacles arose in an analysis \cite{ding} for a related model, \emph{percolation of averages}, which asks for the longest path with mean weight below a parameter~\s{\lm}. While our work is carried out in the context of the \textsc{mmwc}, we expect our methods may be applied to improve results in \cite{ding}.

In order to show that the supercritical \textsc{mmwc} has mean weight \s{\W_n} in some interval \s{[a,b]} and satisfies some property \textsc{p}, one must show (i) there are no cycles with mean weight in \s{[1/(ne),a]}, (ii) there are no cycles violating \textsc{p} in \s{[a,b]}, and (iii) there is at least one cycle (satisfying~\textsc{p}) in \s{[a,b]}. In particular, (ii) is usually done by first moment arguments, and is clearly easier for less restrictive properties \textsc{p}. On the other hand, a natural approach for (iii) is the second moment method, for which it is often advantageous to make \textsc{p} more restrictive.

Indeed, it is demonstrated in \cite{ding,mathieu-wilson} that a straightforward second moment method on the number of cycles fails, due to an excessive contribution from atypically light cycles (or paths) --- conditioned on finding one atypically light cycle \s{\CC}, it is very likely to have a large number of light cycles overlapping with \s{\CC}.  To address this issue, these works developed a notion of \emph{uniformity}: for a \s{k}-cycle with edge weights \s{w_1,\ldots,w_k} summing to \s{k\mw/n}, we say the cycle is \s{A}-uniform if the process \s{X} with increments \s{X_{i+1}-X_i= (nw_i/\mw-1)} has range at most \s{A} --- meaning the cycle has no excessively light subpath. The typical \textsc{mmwc} is uniform with high probability; but on the other hand the count of uniform cycles has small enough variance for the second moment method to go through.

However, this uniformity property is not sufficiently restrictive to yield accurate implications on the asymptotics of \s{\W_n,\LL_n}.
Indeed, it is clear that the precision achievable
for the window \s{[a,b]}
is dictated by the accuracy with which
\textsc{p} captures the typical properties of the \textsc{mmwc}.
In this work, we restrict further to uniform cycles with a \emph{typical profile}. We defer the formal definitions to Section~\ref{s:second.moment}; roughly speaking, we restrict to cycles for which the associated process \s{X} not only has range \s{\le A}, but furthermore has \emph{typical local times} within its range.
The analysis of Sections~\ref{s:rw}~and~\ref{s:atypical} will show that this restricted property is satisfied with high probability. On the other hand, in Section~\ref{s:second.moment}
we show that the restriction captures the remaining variance, so that the number of cycles restricted in this manner is well-concentrated about its mean. Theorem~\ref{t:main} follows as a consequence.

A key technical ingredient in our proof is a collection of precise estimates for \textit{exp-minus-one\/} random walks (that is, a random walk with increments distributed as \s{E-1}, with \s{E} a unit-rate exponential variable) conditioned to have restricted range. While our results are reminiscent of analogous estimates for simple random walk or Brownian motion,
for general random walks under suitable moment assumptions there is no general theory yielding estimates to the level of accuracy needed for our \textsc{mmwc} analysis.  Our estimates for the exp-minus-one walk are derived in Section~\ref{s:rw}. A crucial input to these estimates is a precise characterization of the principal eigenvalue and eigenfunction for the exp-minus-one walk in an interval with absorbing boundaries. This analysis may be of independent interest, and is presented in Section~\ref{s:eig}.

\subsection*{Acknowledgements} Computer experiments on the mean-field stochastic \textsc{mmwc} performed by Uri Zwick with D.B.W.\ provided valuable intuition at an early stage of this project.  J.D.\ and N.S.\ thank the MSR Redmond Theory Group for its hospitality.

\section{Preliminaries}

\subsection*{Notation}
We write \s{f_n \lesssim g_n} (or \s{g_n \gtrsim f_n}) if there exists an absolute constant \s{C>0} such that \s{f_n \leq Cg_n} for all \s{n\in \mathbb N}. We write \s{\asymp} to indicate that \s{\lesssim} and \s{\gtrsim} both hold. For numerous parameters in the paper, we write for example \s{C = C(\Delta, \ep)} to indicate that \s{C>0} is a number depending only on \s{\Delta} and \s{\ep}.

\subsection{Cycles and paths}

For a cycle \s{\CC} we write \s{\len(\CC)} for the length (number of edges) of~\s{\CC}; when \s{\len(\CC)=k} we refer to \s{\CC} as a \s{k}-cycle. We write \s{\wgt(\CC)} for its total weight, and
	\begin{equation} \label{e:c.bar}
	\mw(\CC)\equiv n\wgt(\CC)/\len(\CC)
	\end{equation}
for its mean weight scaled by \s{n}; we say \s{\CC} is \emph{\s{c}-light} if \s{\mw(\CC)\le c}. We apply these terms to paths as well as cycles;
note that a \s{k}-cycle involves \s{k} vertices
while a \s{k}-path involves \s{k+1} vertices.
To treat both undirected and directed
random networks in a fairly unified manner, we will always take cycles and paths to be directed. In our random network, the weight of any given \s{k}-cycle or \s{k}-path is distributed as the sum of \s{k} independent unit-rate exponential random variables:
that is to say, a gamma random variable with shape parameter \s{k}, with probability density
	\begin{equation}\label{e:gamma.density}
	f_k(x) = \f{e^{-x} x^{k-1} }{ (k-1)!},
	\quad x\ge0.
	\end{equation}
We abbreviate this distribution as \s{\text{Gam}(k)}.
For the sake of review, we repeat the following calculation from \cite{mathieu-wilson}:

\begin{lemma}\label{l:basic.first.moment}
Let \s{Z^k_c} count all \s{c}-light \s{k}-cycles
(\s{k\ge2}), 
and let \s{\tZ^k_c} count all \s{c}-light \s{k}-paths
		(\s{k\ge1}). For all \s{c\lesssim1}
		and \s{0<\delta<1} we have
	\begin{align*}
	\E [Z^k_c-Z^k_{c(1-\delta)}]
	&\asymp \f{(n)_k}{n^k} \f{(ce)^k[1-(1-\delta)^k]}{k^{3/2}}
	\le \f{(ce)^k}{k^{3/2}},\\
	\E \tZ^k_c
	&\asymp \f{(n)_{k+1}}{n^k} \f{(ce)^k}{k^{1/2}}
	\le n\f{(ce)^k}{k^{1/2}}.
	\end{align*}
\begin{proof}
In the complete graph on \s{n} vertices, the number of (directed) \s{k}-cycles (\s{k\ge2}) is \s{(n)_k/k}.
The weight of any given \s{k}-cycle is distributed as a \s{\text{Gam}(k)} random variable, therefore
	\[\E Z^k_c
	= \f{(n)_k}{k}
		\P( \text{Gam}(k) \le ck/n )
	= \f{(n)_k}{k}
		\int_0^{ck/n}
			\f{e^{-x} x^{k-1}}{(k-1)!} \,dx.\]
Since \s{c\lesssim1} we have \s{e^{-x}\asymp1}
uniformly over the range of integration,
therefore
	\begin{align*}
	\E [Z^k_c-Z^k_{c(1-\delta)}]
	&\asymp
		\f{(n)_k}{k}
		\int_{c(1-\delta)k/n}^{ck/n}
		\f{x^{k-1}}{(k-1)!} \,dx
	\asymp
	\f{(n)_k}{k}
		\f{(ck/n)^k[1-(1-\delta)^k]}{k!}\\
	&\asymp
	\f{(n)_k}{n^k}
	\f{(ce)^k[1-(1-\delta)^k]}{k^{3/2}}
	\le \f{(ce)^k[1-(1-\delta)^k]}{k^{3/2}},
	\end{align*}
proving the estimate for \s{c}-light \s{k}-cycles.
The estimate for
\s{c}-light \s{k}-paths
follows by noting that
the number of \s{k}-paths
(\s{k\ge1}) is \s{(n)_{k+1}}.
\end{proof}
\end{lemma}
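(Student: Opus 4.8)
The argument is a direct first-moment computation, so the plan is simply to make each of the three standard inputs explicit. \textbf{Counting.} In the complete graph on $n$ vertices there are exactly $(n)_k/k$ directed $k$-cycles (an ordered tuple of $k$ distinct vertices, modulo the $k$ cyclic rotations describing the same cycle) and exactly $(n)_{k+1}$ directed $k$-paths (an ordered tuple of $k+1$ distinct vertices). \textbf{Weight distribution.} The total weight of any fixed $k$-cycle or $k$-path is $\mathrm{Gam}(k)$-distributed, and by \eqref{e:c.bar} the event that $\CC$ is $c$-light is precisely $\{\wgt(\CC)\le ck/n\}$. Combining these via linearity of expectation gives
\[
\E[Z^k_c - Z^k_{c(1-\delta)}] = \frac{(n)_k}{k}\int_{c(1-\delta)k/n}^{ck/n} f_k(x)\,dx,
\qquad
\E \tZ^k_c = (n)_{k+1}\int_0^{ck/n} f_k(x)\,dx,
\]
with $f_k$ the gamma density from \eqref{e:gamma.density}.

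\textbf{Estimating the integral.} The only point requiring a remark is that the exponential factor in $f_k$ can be discarded up to constants. A $k$-cycle uses $k$ vertices and a $k$-path uses $k+1\le n$, so $k\le n$ and the whole integration range lies in $[0,ck/n]\subseteq[0,c]$; since $c\lesssim1$, this yields $e^{-x}\asymp1$ uniformly there, with implied constants independent of $k$ and $n$. Hence up to universal constants we may replace $f_k(x)$ by $x^{k-1}/(k-1)!$ and integrate exactly:
\[
\int_{c(1-\delta)k/n}^{ck/n}\frac{x^{k-1}}{(k-1)!}\,dx = \frac{(ck/n)^k\big[1-(1-\delta)^k\big]}{k!},
\qquad
\int_0^{ck/n}\frac{x^{k-1}}{(k-1)!}\,dx = \frac{(ck/n)^k}{k!}.
\]
Then Stirling's formula $k!\asymp\sqrt{k}\,(k/e)^k$ turns $k^k/(k\cdot k!)$ into $e^k/k^{3/2}$ and $k^k/k!$ into $e^k/\sqrt{k}$; collecting the surviving $c^k$ and $n^{-k}$ factors gives
\[
\E[Z^k_c - Z^k_{c(1-\delta)}]\asymp \frac{(n)_k}{n^k}\cdot\frac{(ce)^k\big[1-(1-\delta)^k\big]}{k^{3/2}},
\qquad
\E \tZ^k_c \asymp \frac{(n)_{k+1}}{n^k}\cdot\frac{(ce)^k}{\sqrt{k}}.
\]
The stated upper bounds are then immediate from $(n)_k\le n^k$, $(n)_{k+1}\le n^{k+1}$, and $0\le 1-(1-\delta)^k\le 1$.

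There is no substantive obstacle in this lemma; the only thing to watch is the uniformity of the implied constants in $k$ and $n$, which is exactly why the hypotheses $c\lesssim1$ (together with the implicit $k\le n$) are what upgrade the trivial upper bound to the two-sided estimate $\asymp$ — without control on $c$ the factor $e^{-x}$ over the integration window would no longer be bounded below by a universal constant.
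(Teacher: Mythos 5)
Your proof is correct and follows essentially the same route as the paper's: count directed $k$-cycles/$k$-paths, use the $\mathrm{Gam}(k)$ weight distribution, drop the $e^{-x}$ factor up to constants since $c\lesssim 1$ (and $k\le n$), and integrate the polynomial exactly before applying Stirling. The only difference is that you make the uniformity of constants and the implicit bound $k\le n$ explicit, which the paper leaves tacit.
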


An easy variation of the preceding calculation shows that in the targeted regime for \s{\W_n}, there can be no cycles of length less than \s{(\log n)^2}:

\begin{lemma}\label{l:log.squared}
Given any \s{c = 1/e[1 + O(1/(\log n)^2)]} and \s{C>0},
the probability that there is a cycle \s{\CC} with
\s{1/e <\mw(\CC) \le c}
and \s{\len(\CC) < C (\log n)^2}
tends to zero in the limit \s{n\to\infty}.

\begin{proof}
The expected number of directed \s{k}-cycles
that are \s{c}-light but not \s{(1/e)}-light is
	\[
	\E (Z^k_c-Z^k_{1/e})
	=
	\f{(n)_k}{k}
	\int_{k/(en)}^{ck/n}  \f{e^{-x}x^{k-1}}{(k-1)!} \,dx
	\lesssim \f{(ce)^k-1}{k^{3/2}}
	\lesssim \f{k/(\log n)^2}{k^{3/2}},
	\]
where the last bound used that
\s{k\lesssim(\log n)^2}.
Summing over \s{2\le k<(\log n)^2}
and applying Markov's inequality proves the claim.
\end{proof}
\end{lemma}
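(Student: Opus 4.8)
The plan is a one-line first-moment estimate built on Lemma~\ref{l:basic.first.moment}. First I would dispose of a trivial case: if $c\le 1/e$ the event is empty (no cycle can have $1/e<\mw(\CC)\le c$), so I may assume $c>1/e$, equivalently $ce>1$. For $k\ge 2$ the difference $Z^k_c-Z^k_{1/e}$ counts exactly the $k$-cycles $\CC$ with $1/e<\mw(\CC)\le c$ (the boundary value $\mw(\CC)=1/e$ having probability zero), so the event in the lemma is contained in $\{\sum_{2\le k<C(\log n)^2}(Z^k_c-Z^k_{1/e})\ge 1\}$, and by Markov's inequality it will suffice to show that the expectation of this sum tends to $0$ as $n\to\infty$.

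Next I would estimate a single term via Lemma~\ref{l:basic.first.moment}, taking $\delta\equiv 1-1/(ce)\in(0,1)$ so that $c(1-\delta)=1/e$ and $(ce)^k[1-(1-\delta)^k]=(ce)^k-1$; this gives
\[
\E[Z^k_c-Z^k_{1/e}]\ \asymp\ \frac{(n)_k}{n^k}\,\frac{(ce)^k-1}{k^{3/2}}\ \le\ \frac{(ce)^k-1}{k^{3/2}}\,.
\]
The only point requiring care is a uniform bound on $(ce)^k-1$ over the range $2\le k<C(\log n)^2$. Since $c=\frac1e[1+O((\log n)^{-2})]$ we have $\log(ce)=O((\log n)^{-2})$, so $k\log(ce)$ stays bounded by a constant depending only on $C$ on that range; then $e^t-1\le |t|e^{|t|}$ yields $(ce)^k-1\lesssim k/(\log n)^2$, with implied constant depending on $C$. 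This is precisely where the hypothesis $\len(\CC)<C(\log n)^2$ is essential: without it $(ce)^k$ could be superpolynomially large and the first moment would not be small.

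Finally I would sum. Combining the two bounds gives $\E[Z^k_c-Z^k_{1/e}]\lesssim (\log n)^{-2}k^{-1/2}$ uniformly on the range, hence
\[
\sum_{k=2}^{\lceil C(\log n)^2\rceil}\E[Z^k_c-Z^k_{1/e}]\ \lesssim\ \frac{1}{(\log n)^2}\sum_{k\le C(\log n)^2}k^{-1/2}\ \lesssim\ \frac{\sqrt{C}}{\log n}\ \longrightarrow\ 0\,,
\]
and Markov's inequality completes the proof. I do not expect any genuine obstacle here; this is essentially a warm-up computation. The only thing to keep track of is that the constant hidden in the $O((\log n)^{-2})$ hypothesis on $c$, amplified by the exponent $k\le C(\log n)^2$, produces a $C$-dependent prefactor, which is harmless because $C$ is fixed and the final bound still decays like $1/\log n$.
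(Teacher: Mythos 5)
Your proposal is correct and takes essentially the same route as the paper: a first-moment bound on $\E[Z^k_c-Z^k_{1/e}]$ (the paper writes the gamma-density integral directly, you invoke Lemma~\ref{l:basic.first.moment} with $\delta=1-1/(ce)$, which is the same computation), followed by the observation that $(ce)^k-1\lesssim k/(\log n)^2$ for $k\lesssim(\log n)^2$, summation over $k$, and Markov's inequality. The extra care you take with the trivial case $c\le 1/e$ and the $C$-dependent constants is fine and changes nothing of substance.
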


\subsection{Uniformity}

It was previously demonstrated in \cite{ding,mathieu-wilson} that cycles which are
uniform (in a sense defined formally below) have low variance.
For a sequence of weights \s{w_1,\ldots,w_k}
summing to \s{k\mw/n}, let us define its \emph{excedance relative to \s{c}}
(for short, \emph{\s{c}-excedance\/})
to be the quantity
	\beq\label{e:excedance}
	\sum_i (nw_i/c-1) = k(\mw/c-1).
	\eeq

\begin{dfn}\label{d:unif}
Say that a cycle \s{\CC} is \emph{\s{(c,A)}-uniform}
if it has no subpaths with \s{c}-excedance
outside \s{[-A,A]}. We say simply that \s{\CC} is \emph{\s{A}-uniform} if it is \s{(\mw(\CC),A)}-uniform. We apply the same terminology
to paths as well as cycles.
\end{dfn}

\begin{dfn}\label{d:bridge}
Given weights \s{w_1,\ldots,w_k} with mean \s{\mw/n}, the \emph{\s{D}-tilted bridge} is the process
	\[W_j= \sum_{i=1}^j (n w_i-\mw-D/k),
	\quad\text{started from \s{W_0=0}
	and ending at \s{W_k=-D}.}\]
We refer to the \s{D=0} case as the \emph{untilted} bridge: in particular, a cycle
with mean weight \s{\mw/n} is \s{A}-uniform if and only if its untilted bridge has range \s{\le \mw A}.
Sometimes we say \s{k}-bridge to emphasize that the bridge is defined on the time interval \s{[0,k]}.
\end{dfn}

\subsection{Exponential random walks}
\label{ss:intro.exp.rw}
The \emph{exp-minus-one random walk} is the random walk on the real line with step distribution \s{u-1}, where \s{u} is a unit-rate exponential random variable.
Observe that the cycle bridges (Definition~\ref{d:bridge})
can be rescaled to an exp-minus-one version:
to see this, take any positive \s{\mu} and consider the random vector \s{\vec{w}=(w_1,\ldots,w_k)} where the \s{w_i} are independent exponential random variables with mean \s{\mu}. Following the notation \eqref{e:c.bar}, suppose they sum to \s{k\mw/n}. Let \s{\bar{s}} be any positive \s{\mw}-measurable random variable, and consider the process
	\[W_j = \sum_{i=1}^j (n w_i-\bar{s})
	=\bar{s} \sum_{i=1}^j (n w_i/\bar{s}-1),
	\quad 0\le j\le k.\]
Conditioned on \s{\mw}, the vector \s{\vec{w}} is distributed as a uniform sample from the space of all non-negative vectors in \s{\R^k} with sum \s{k\mw/n}, regardless of the value of \s{\mu}.  In particular, taking \s{\mu=\bar{s}/n} shows that
	\begin{equation}\label{e:scale}
	\begin{array}{l}
	\text{the process \s{W/\bar{s}}
	is distributed as an
	exp-minus-one random walk}\\
	\text{started from the origin and
	\emph{conditioned} to be at \s{k(\mw/\bar{s}-1)}
	at time \s{k}.}\end{array}
	\end{equation}
For example, by taking \s{\bar{s}=\mw}, we see that the process with increments \s{(nw_i/\mw-1)} is distributed simply as an exp-minus-one walk conditioned to return to the origin at time \s{k}. We will apply \eqref{e:scale} with \s{\bar{s}\ne\mw} in the proof of Lemma~\ref{l:unif.paths}.

In view of Definition~\ref{d:bridge}, we study the exp-minus-walk with restricted range. In Section~\ref{s:eig} we give a precise computation of the principal eigenvalue \s{\lm_A} of the exp-minus-walk with killing outside \s{[0,A]}. In the limit of large \s{A}
it behaves as
	\begin{equation}\label{e:lambda.A.asymptotic}
	\lm_A=\exp\{ -\pi^2/(2A^2) + O(1/A^3) \}.
	\end{equation}
By comparison, for simple symmetric random walk on the integers killed outside \s{\{1,\dots,A-1\}} (with \s{A} integral), the principal eigenvalue is \s{\lm_A^\textsc{srw}=\cos(\pi/A)}, which behaves in the limit of large \s{A} as \s{\exp\{ -\pi^2/(2A^2) + O(1/A^4) \}} (see e.g.\ \cite{kac}).

\subsection{Proof overview} Having finished our preliminary calculations, we conclude this section by outlining the proof of our main result.

In Section~\ref{s:rw} we prove the necessary estimates for (range-restricted) exp-minus-one walks --- taking as input the principal eigenvalue and eigenfunction for the walk with killing outside the interval \s{[0,A]}, which will be computed in Section~\ref{s:eig}. The most important consequences of this section concern the \emph{exp-minus-one random walk \s{k}-bridge},
by which we mean an exp-minus-one random walk
conditioned to return to the origin in \s{k} steps.
Lemma~\ref{l:bridge} computes the probability \s{R^k_A} for this process to have range \s{\le A}:
	\[
	R^k_A \asymp (\lm_A)^k \f{k^{3/2}}{A^3}
	\quad\quad\text{for all }k\gtrsim A^2.
	\]
Lemma~\ref{l:expected.profile} shows that if we condition this process to have range \s{\le A}, then its local times are comparable with those of the analogously range-restricted Brownian motion.

In Section~\ref{s:atypical} we apply the random walk estimates to rule out supercritical cycles which are atypically light or long. Recall that in Lemma~\ref{l:basic.first.moment} we computed the expectation of the number \s{Z^k_c} of \s{c}-light \s{k}-cycles. Let \s{Z^k_c(A)} count \s{c}-light \s{k}-cycles that are \s{A}-uniform: in Lemma~\ref{l:unif.cycles} we apply the estimate on \s{R^k_A} to prove
	\[
	\E Z^k_c(A)
	= (\E Z^k_c) R^k_A
	\asymp \f{(ce\lm_A)^k}{A^3}
	\quad\quad\text{for all }k\gtrsim A^2.
	\]
If a \s{c}-light cycle fails to be \s{A}-uniform, then we can extract a subpath whose bridge decreases by \s{-A+O(1)}
and is \s{(A+O(1))}-uniform.
We let \s{\tZ^\ell_c(A)} count \s{\ell}-paths of this type:
in Lemma~\ref{l:unif.paths} we apply random walk estimates from Section~\ref{s:rw} to show that
	\[\E \tZ^\ell_c(A)
	\lesssim \f{(ce\lm_A)^\ell}{A^3} \f{n}{e^A}
	\quad\quad
	\text{for all }\ell\ge1.\]
Consequently, if \s{Z^{\ge A^2}_c} counts all
\s{c}-light cycles of length \s{\ge A^2},
we have
	\begin{equation}\label{e:cycles.and.paths.lbd}
	\P\big( Z^{\ge A^2}_c>0 \big)
	\le \sum_{k\ge A^2} \E Z^k_c(A)
		+ \sum_{\ell\ge1}\E \tZ^\ell_c(A)
	\lesssim \sum_{k\ge1} \f{(ce\lm_A)^k}{A^3}
		\Big[1+\f{n}{e^A}\Big].
	\end{equation}
This calculation suggests that we take \s{A=\log n+O(1)}
and rule out values of \s{c} that make \s{ce\lm_A} too small. This leads to the definitions
	\beq\label{e:c.crit}
	A_\circ\equiv\log n\,, \quad\quad\quad
	c_\circ
	\equiv \f{1}{e\lm_{A_\circ}}
		=\frac{1}{\ee} \left[1+\frac{\pi^2}{2(\log n)^2}+O(1/(\log n)^3)\right],
	\eeq
applying \eqref{e:lambda.A.asymptotic}.
It suffices to prove Theorem~\ref{t:main}
with \s{c_\circ} in place of \s{c_\star}.
The lower bound on \s{n\W_n} stated in the theorem
is an easy consequence of Lemma~\ref{l:log.squared}
and \eqref{e:cycles.and.paths.lbd}, and this is the first main consequence of Section~\ref{s:atypical}.

The second main part of Section~\ref{s:atypical} is to rule out cycles that are much longer than \s{(\log n)^3} in the regime \s{|\mw-c_\circ| \lesssim 1/(\log n)^3}. This argument is rather more involved, but it is guided by the same basic principle that the untilted bridge of a cycle either stays in a restricted range, or has sharp decreases over restricted ranges. If a cycle is very long, its bridge must either have many sharp decreases, or else stay in restricted ranges over long intervals. Both events impose a severe probability cost which can be used to rule out the presence of long cycles.

In Section~\ref{s:second.moment} we identify a subcollection of uniform cycles with a ``typical local time profile,'' and show that the number of such cycles is well concentrated about its mean. It follows that these cycles exist (with high probability) whenever their expected number is large. It follows from Lemma~\ref{l:expected.profile} that most of the contribution to \s{\E Z^k_c(A)} comes from cycles having a typical local time profile --- meaning that \s{\E Z^k_c(A)} locates the sharp transition. Theorem~\ref{t:main} then follows in a straightforward manner.

\section{Random walk estimates}\label{s:rw}

In this section we derive the necessary estimates for exp-minus-one walks subject to restrictions on the range of the walk. A few of the estimates require some understanding of the principal eigenvalue \s{\lm_A}, and associated left eigenfunction \s{\vph_A}, for the walk with killing outside the interval \s{[0,A]}. These will be computed in Section~\ref{s:eig}; in the present section we shall require only the facts that
	\begin{equation}\label{e:delta.A-1}
	\lm_A
	= \exp\{ -[1+O(1/A)]\pi^2/(2A^2) \}
	\end{equation}
and that when \s{\vph_A} is normalized to be a probability density on \s{[0,A]},
	\begin{equation}\label{e:delta.A-2}
	\vph_A(x) \asymp \de_A(x)/A^2\,,\\
	\text{ where }
	\de_A(x)
	\equiv \Ind{0\le x\le A}
		[(x+1) \wedge (A-x+1)].
	\end{equation}
We will also make repeated use of a coupling
of Koml\'os--Major--Tusn\'ady
\cite[Theorem~1]{KMT2}:
let \s{(X_i)_{i\ge1}} be i.i.d.\ random variables with zero mean, unit variance, and finite exponential moments. For any \s{\lm>0} there are constants \s{K_1,K_2}, and a coupling of \s{(X_i)_{i\ge1}} to i.i.d.\ standard Gaussian random variables \s{(Y_i)_{i\ge1}}, such that
	\[\P\bigg( \max_{j\le k} \left|
	\sum_{i=1}^j (X_i-Y_i)\right|
	>K_1\log k+x \bigg) \le K_2 e^{-\lm x}.\]

\subsection{Estimates for short time scales} We begin with some estimates for exp-minus-one walks run for \s{k} steps where \s{k} is arbitrary. We will apply these estimates for the case \s{k\lesssim A^2}; in the next subsection we derive better estimates for longer time scales \s{k \gtrsim A^2}.  The following lemma is well known, see e.g., \cite[Lemma~3.3]{pemantle-peres}.

\begin{lemma} \label{l:one.side}
Consider a random walk whose step distribution has zero mean and unit variance. The probability that the walk started from \s{x} survives at least \s{k} steps before going negative is \s{\asymp (x+1)/(k+1)^{1/2}}, uniformly over \s{k\ge0} and \s{0\le x\lesssim k^{1/2}}.
\end{lemma}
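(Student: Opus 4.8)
I would obtain this either by citing \cite[Lemma~3.3]{pemantle-peres} or, self-containedly, by comparing the walk to Brownian motion through the Koml\'os--Major--Tusn\'ady coupling quoted above (which applies here, the step distributions arising in this paper all having finite exponential moments). Write $\tau$ for the first time the walk is negative; the claim is trivial when $k$ is bounded, so assume $k$ is large, so that $(k+1)^{1/2}\asymp k^{1/2}$. For standard Brownian motion $\mathcal B$ the analogous estimate is exact: by the reflection principle the probability that $t\mapsto x+\mathcal B_t$ stays nonnegative on $[0,k]$ is $\P(|\mathcal B_k|\le x)=\P(|N|\le x/k^{1/2})\asymp x/k^{1/2}$ with $N$ a standard Gaussian, which matches the target $(x+1)/k^{1/2}$ whenever $x\gtrsim1$ (both being $\asymp1$ once $x\asymp k^{1/2}$). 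So the task splits into (a) transporting this Brownian estimate across the coupling, which is routine away from small starting points, and (b) the small-$x$ regime.

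For (a): when $x\asymp k^{1/2}$ the Brownian comparison over $[0,k]$ already gives $\P_x(\tau>k)\asymp1$, so assume $\log k\lesssim x\lesssim k^{1/2}$. Apply the KMT coupling with $\lambda=1$ and $t=\log k$: off an event of probability $\lesssim1/k$, the walk started at $x$ stays within $M:=K_1\log k+t\asymp\log k$ of $x+\mathcal B$ at every integer time; and a union bound over the $k$ Brownian bridges joining consecutive integer times (each dipping more than $u$ below its endpoints with probability $\le e^{-cu^2}$) controls the between-integer fluctuation by $O((\log k)^{1/2})$ off a further event of probability $\lesssim1/k$. On the good event the survival of the walk is then sandwiched between the survival on $[0,k]$ of $x-O(M)+\mathcal B$ and of $x+O(M)+\mathcal B$; since $M\lesssim x$ in this range, the Brownian estimate pins $\P_x(\tau>k)$ between two constant multiples of $(x+1)/k^{1/2}$, the $O(1/k)$ coupling errors being negligible.

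The delicate part is (b), $x\lesssim\log k$, where the $O(\log k)$ coupling error swamps the target $\asymp(x+1)/k^{1/2}\lesssim(\log k)/k^{1/2}$. For the upper bound I would still bootstrap on $k$: the Markov property at time $\lfloor k/2\rfloor$ combined with optional stopping at $\tau\wedge\lfloor k/2\rfloor$ (the steps being bounded below in our applications, so $\E[S_{\lfloor k/2\rfloor}\mathbf 1\{\tau>\lfloor k/2\rfloor\}]\le x+O(1)$) yields by induction the crude bound $\P_x(\tau>k)\lesssim(x+\log k)/k^{1/2}$, and feeding this back into the same identity upgrades it to the sharp $(x+1)/k^{1/2}$ for $x\lesssim\log k$. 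For the matching lower bound I would fall back on classical fluctuation theory: Sparre--Andersen's identity gives $\P_0(\tau>k)\asymp k^{-1/2}$, hence $\P_x(\tau>k)\gtrsim k^{-1/2}$ for every $x\ge0$, and the refinement $\P_x(\tau>k)\asymp(x+1)k^{-1/2}$ over the remaining range $1\lesssim x\lesssim\log k$ follows from the renewal-function description of the strict descending ladder heights, whose renewal function obeys $V(x)\asymp x+1$; this is precisely the content of \cite[Lemma~3.3]{pemantle-peres}. I expect the small-$x$ lower bound to be the main obstacle: an elementary induction appears to lose a factor at each of $\Theta(\log^\ast k)$ scales, whereas the ladder-height machinery delivers the whole range uniformly.
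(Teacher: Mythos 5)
Your proposal is correct and ultimately coincides with what the paper does: the paper gives no proof of this lemma at all, simply citing it as well known from \cite[Lemma~3.3]{pemantle-peres}, which is your first route and also the result your self-contained sketch falls back on for the delicate small-$x$ lower bound via ladder-height renewal theory. The additional KMT/Brownian comparison for $\log k\lesssim x\lesssim k^{1/2}$ is a reasonable (if unnecessary) supplement, with the minor caveat that the lemma as stated assumes only zero mean and unit variance, so the exponential-moment hypothesis of KMT is an extra assumption that the classical fluctuation-theoretic argument does not need.
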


Next we review an easy estimate for the probability the random walk stays confined in an interval; this will be substantially refined later for exp-minus-one walks.

\begin{lemma}\label{l:exit.prob}
Consider a random walk whose step distribution has zero mean and unit variance. The probability the walk will survive for at least \s{k} steps  before exiting \s{[0,A]} is \s{\lesssim \exp\{ -\Om(k/A^2) \}} uniformly over all \s{A\gtrsim1}, \s{k\ge0}, \s{X_0\in[0,A]}.

\begin{proof}
Divide the time interval \s{[0,k]} into length-\s{t} subintervals with \s{t\asymp A^2}. It is then enough to note that the probability for the walk \s{X}
to survive over a single subinterval
is bounded away from one, uniformly over \s{A} and over the choice of the starting point \s{x\in[0,A]}.
Indeed, the survival probability is upper bounded by
\s{\P(|X_t-X_0|\le A)},
which is bounded away from one
either by direct calculation with the gamma distribution,
or by applying the central limit theorem for \s{(X_t-t)/\sqrt{t}}.
\end{proof}
\end{lemma}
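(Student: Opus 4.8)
The plan is to partition the time axis into consecutive blocks of length $t \asymp A^2$ and show that the walk fails to survive a single block with probability bounded uniformly away from $1$; the full statement then follows by independence of increments across blocks. Concretely, choose $t = \lceil c_0 A^2 \rceil$ for a suitable absolute constant $c_0$, and write $k = mt + r$ with $0 \le r < t$, so that $m \asymp k/A^2$ (we may assume $k \ge t$, else the bound is trivial after adjusting the implied constant since $\exp\{-\Om(k/A^2)\}$ is bounded below by a positive constant on $k \le t$). Surviving $[0,k]$ inside $[0,A]$ forces the walk to survive each of the $m$ disjoint blocks $[jt,(j+1)t]$ inside $[0,A]$; conditionally on the walk's position at time $jt$, the event of surviving the $j$-th block is determined by fresh i.i.d.\ increments, and its probability is at most $\sup_{x \in [0,A]} \P_x(\text{walk stays in }[0,A]\text{ for }t\text{ steps})$. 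So it suffices to prove
	\beq\label{e:block.survival}
	\sup_{A \gtrsim 1}\ \sup_{x \in [0,A]}\ \P_x\big( X_s \in [0,A]\text{ for all } 0 \le s \le t \big) \le 1 - \eta
	\eeq
for some absolute $\eta > 0$; then $\P(\text{survive }k\text{ steps}) \le (1-\eta)^m \le \exp\{-\eta m\} \le \exp\{-\Om(k/A^2)\}$.

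For \eqref{e:block.survival}, bound the survival probability by $\P_x(|X_t - x| \le A)$, i.e.\ $\P(|X_t - t| \le A)$ after recentering, using that the walk has zero-mean unit-variance increments so $X_t - X_0$ is the sum of $t$ such increments. Since $t \asymp A^2$, the event $\{|X_t - t| \le A\}$ is the event that the standardized sum $(X_t - t)/\sqrt{t}$ lies in an interval of fixed length $A/\sqrt t \asymp 1$. By the central limit theorem this probability converges (as $t \to \infty$, hence as $A \to \infty$) to $\P(|N| \le c)$ for a standard normal $N$ and a constant $c$ depending only on the ratio $A^2/t$, which is strictly less than $1$; choosing $c_0$ fixed makes this a fixed constant bounded away from $1$, uniformly for all large $A$. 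For the finitely many small values of $A$ (equivalently small $t$) not covered by the limit, one checks \eqref{e:block.survival} directly: when the increments are $\Exp(1)-1$, $X_t - t \sim \text{Gam}(t) - t$ has an explicit density, and $\P(|X_t - t| \le A)$ is bounded away from $1$ by a crude estimate (e.g.\ the gamma law puts mass bounded below on each of $\{X_t > t + A\}$ and a neighborhood of $0$). Combining the two ranges yields a uniform $\eta > 0$.

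The only mild subtlety — and the one place to be careful — is making the uniformity in \eqref{e:block.survival} genuinely uniform in $A$ rather than merely asymptotic: the CLT handles $A \ge A_0$ for some threshold, and one must supply the direct gamma computation for $1 \lesssim A < A_0$, where the normal approximation is not yet accurate. Neither range is hard, but the argument must name an explicit constant $\eta$ valid across the crossover. Everything else is routine: the block decomposition, the use of independence of increments across blocks, and the passage from the single-block bound to the exponential decay in $k/A^2$.
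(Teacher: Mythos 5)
Your proposal is correct and follows essentially the same route as the paper's proof: block the time axis into intervals of length $t\asymp A^2$, bound the one-block survival probability by $\P(|X_t-X_0|\le A)$, and control that via the CLT (with the gamma density available as a direct check for small $A$), then multiply across blocks. The only difference is that you spell out the Markov-property/product step and the small-$A$ crossover explicitly, which the paper leaves implicit.
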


From now on we restrict our attention to exp-minus-one random walks in intervals \s{[0,A]} with absorbing boundaries. All our estimates hold also for one-minus-exp random walks (up to constant multiplicative error). Recall from \eqref{e:delta.A-2} the definition of \s{\de_A}. We assume from now on that \s{A} is at least of large constant size.

\begin{cor}\label{c:two.side} There is an absolute constant \s{\eta>0} such that for \s{k\le (\eta A)^2}, the probability for the exp-minus-one walk started from \s{x} to survive at least \s{k} steps before exiting \s{[0,A]} is \s{\asymp \de_A(x)/(k+1)^{1/2}}, uniformly over \s{0\le \de_A(x) \lesssim k^{1/2}}.

\begin{proof} As before it suffices to consider \s{k} exceeding any large fixed constant, so that \s{k\asymp k+1}. Assume first that \s{x\le A/2}, so \s{\de_A(x)=x+1}; the case \s{x\ge A/2} follows in a symmetric fashion. From the one-sided bound of Lemma~\ref{l:one.side}, the probability for the walk started from \s{x} to survive at least \s{k} steps before going negative is \s{\asymp (x+1)/k^{1/2}}, which trivially implies the upper bound. For the lower bound it suffices to subtract the probability that the walk exceeds \s{A} before going negative, which has probability \s{\lesssim (x+1)/A \le \eta(x+1)/k^{1/2}}. Taking sufficiently small \s{\eta} gives the lower bound.
\end{proof}
\end{cor}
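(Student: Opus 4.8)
The plan is to reduce the two-sided survival estimate to the one-sided estimate of Lemma~\ref{l:one.side}, using the short time scale \s{k\le(\eta A)^2} to control the error from the upper barrier at \s{A}. First I would reduce to \s{k} exceeding a large absolute constant, so that \s{k+1\asymp k}; for bounded \s{k} the claim is immediate since then \s{\de_A(x)\asymp1} and the survival probability is bounded above and below by absolute constants (the walk survives if its first few steps are suitably small, which has positive probability uniformly). Next I would split into the two symmetric cases \s{x\le A/2} (where \s{\de_A(x)=x+1}) and \s{x\ge A/2} (where \s{\de_A(x)=A-x+1}), and treat only the first; the second follows by reflecting the interval \s{[0,A]} about its midpoint, noting that the reflected walk is a one-minus-exp walk, for which the same estimates hold up to constants.

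For \s{x\le A/2}: let \s{p} be the probability that the walk from \s{x} survives \s{k} steps before going negative, and let \s{q} be the probability that the walk from \s{x} reaches or exceeds \s{A} at some point before first going negative. The quantity we want lies between \s{p-q} and \s{p}. Lemma~\ref{l:one.side} gives \s{p\asymp(x+1)/k^{1/2}}, which is the desired order and already yields the upper bound. For the lower bound it suffices to show \s{q\le\tfrac12 p} for a suitable choice of the constant \s{\eta}. Here I would bound \s{q} by the probability that a random walk started at \s{x} ever reaches level \s{A} before level \s{0}; by optional stopping (the walk is a martingale, and it has finite variance so the stopped walk is uniformly integrable on the relevant event — or one can truncate and pass to the limit), this probability is at most \s{(x+1)/A}, up to an absolute constant absorbing the overshoot past level \s{A} (the overshoot has exponentially small tails since the steps do). Since \s{k\le(\eta A)^2}, we get \s{q\lesssim (x+1)/A\le \eta (x+1)/k^{1/2}\lesssim \eta\cdot p\cdot k^{1/2}\cdot (1/k^{1/2})}; more directly, \s{q\lesssim (x+1)/A} and \s{p\asymp (x+1)/k^{1/2}\geq (x+1)/(\eta A)}, so \s{q/p\lesssim \eta}. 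Choosing \s{\eta} small enough makes \s{q\le\tfrac12 p}, giving the lower bound \s{\gtrsim (x+1)/k^{1/2}=\de_A(x)/(k+1)^{1/2}}.

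The main obstacle, such as it is, is making the overshoot bound in the estimate for \s{q} clean: the optional-stopping argument bounds the probability of reaching the \emph{value} \s{A} exactly, but the walk has continuous steps and will generically overshoot, so one wants the probability of crossing level \s{A}, and must check the overshoot contributes only a constant factor. This is handled by the exponential moment assumption on the exp-minus-one step (overshoot past a barrier has uniformly bounded exponential moments), or even more simply by applying the one-sided bound of Lemma~\ref{l:one.side} to the walk reflected about \s{A} to control \s{q} directly in the same form \s{\asymp (x+1)/A} up to constants. The rest is bookkeeping with the symmetric case and the reduction to large \s{k}.
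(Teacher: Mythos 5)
Your proposal is correct and follows essentially the same route as the paper: upper bound from the one-sided estimate of Lemma~\ref{l:one.side}, lower bound by subtracting the probability of exceeding \s{A} before going negative, bounded by \s{\lesssim (x+1)/A \le \eta(x+1)/k^{1/2}} and absorbed by taking \s{\eta} small. Your extra care with the overshoot via optional stopping (or reflection and the one-sided bound) is exactly the justification the paper leaves implicit, so there is no substantive difference.
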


Let \s{p^k_A(x,y)} denote the probability density for an exp-minus-one walk \s{X} to go from \s{x} to \s{y} in \s{k} time steps without exiting \s{[0,A]}. Let \s{\vph^k_A(x,y)} denote the density at \s{X_k=y} conditioned on survival in \s{[0,A]} for \s{k} steps.
	\[\begin{array}{rl}
	p^k_A(x,y)
	\hspace{-6pt}&\equiv
	\lim_{\ep\downarrow0} \ (2\ep)^{-1}
	\P( |X_k-y|\le\ep; X_t \in[0,A]
	\text{ for all } 0\le t\le k
	\,|\, X_0=x ),\\
	\vph^k_A(x,y)
	\hspace{-6pt}&\equiv
	\lim_{\ep\downarrow0} \
	(2\ep)^{-1}
	\P( |X_k-y|\le\ep \,|\,
	X_t \in[0,A] \text{ for all } 0\le t\le k,
	X_0=x
	).
	\end{array}\]

\begin{lemma}\label{l:short.kernel} It holds uniformly over \s{A,k,x,y} that
	\[p^k_A(x,y)\lesssim
	 \f{\de_A(x) \de_A(y) }{ (k+1)^{3/2}}.\]

\begin{proof}
As before it suffices to consider \s{k} exceeding any large fixed constant, so that \s{k\asymp k+1}. Take exp-minus-one walks \s{X} and \s{Z} started from \s{X_0=x} and \s{Z_0=0} respectively, and take a one-minus-exp walk \s{Y} started from \s{Y_0=y}, with \s{X,Y,Z} mutually independent. Fix also \s{c\in(0,1/2)}. The probability for an exp-minus-one walk to go from \s{x} to \s{[y-\ep,y+\ep]} without exiting \s{[0,A]} is upper bounded by the probability of the intersection of three events:
	\[\begin{array}{rl}
	E_1 \hspace{-6pt}&=
		\set{\text{\s{X} survives at least
		\s{ck} steps
		without exiting \s{[0,A]}}},\\
	E_2 \hspace{-6pt}
		&= \set{\text{\s{Y} survives at least
		\s{ck} steps
		without exiting \s{[0,A]}}},\\
	E_3 \hspace{-6pt}&=
		\set{\text{\s{Z} goes from \s{0}
		to \s{Y_{ck}-X_{ck} + [-\ep,\ep]}
		 in \s{(1-2c)k} steps}}.
	\end{array}
	\]
Now take \s{c=1/3}, so the one-sided bound of Lemma~\ref{l:one.side} gives \s{\P(E_1)\lesssim \de_A(x)/k^{1/2}} and similarly \s{\P(E_2)\lesssim \de_A(y)/k^{1/2}}. The \textsc{clt} then gives \s{\P(E_3 | E_1 \cap E_2, Y_{ck}-X_{ck})\lesssim \ep/k^{1/2}}, uniformly over all choices of \s{Y_{ck}-X_{ck}}. Multiplying these probabilities together proves the bound.
\end{proof}
\end{lemma}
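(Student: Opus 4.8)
The plan is to bound the constrained transition density by a product of three probabilities coming from a three-block decomposition of the path: survival of the initial block, survival of the time-reversed terminal block, and a local central limit theorem for the middle block.

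First I would dispose of the case where \s{k} is below a fixed constant: there \s{p^k_A(x,y)} is dominated by the unconstrained \s{k}-step transition density, which is \s{\lesssim(k+1)^{-1/2}\lesssim1}, while the right-hand side is \s{\asymp1}, since \s{\de_A(x),\de_A(y)\ge1} for \s{x,y\in[0,A]} (and both sides vanish if \s{x\notin[0,A]} or \s{y\notin[0,A]}). So assume \s{k} exceeds a large constant, in particular \s{k\asymp k+1}.

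For the main regime, fix \s{c=1/3} and split the \s{k} steps into a first block of \s{\lceil ck\rceil} steps, a last block of \s{\lceil ck\rceil} steps, and a middle block of \s{\asymp k} steps. For the walk to run from \s{x} to \s{y} without leaving \s{[0,A]} it must in particular satisfy: (i) over the first block, the exp-minus-one walk started at \s{x} survives in \s{[0,A]}; (ii) over the last block, the time-reversed trajectory --- which is a one-minus-exp walk started at \s{y}, for which all the estimates of this section remain valid --- survives in \s{[0,A]}; and (iii) the middle block, which conditionally on the endpoints of the first and last blocks is an independent exp-minus-one walk, covers the prescribed displacement. For (i), the survival probability is at most the minimum of the probability of never going below \s{0} and the probability of never going above \s{A}; applying Lemma~\ref{l:one.side} from \s{x} and from \s{A-x} respectively shows it is \s{\lesssim\de_A(x)/(k+1)^{1/2}}. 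Symmetrically (ii) is \s{\lesssim\de_A(y)/(k+1)^{1/2}}. For (iii), after discarding the \s{[0,A]} constraint on the middle block (which only weakens the bound), the event is that a free exp-minus-one walk of \s{\asymp k} steps lands in an \s{\ep}-window about a fixed point; by the local central limit theorem this has probability \s{\lesssim\ep/(k+1)^{1/2}}, uniformly over the location of that point. Multiplying the three estimates and letting \s{\ep\downarrow0} yields \s{p^k_A(x,y)\lesssim\de_A(x)\de_A(y)/(k+1)^{3/2}}.

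The step requiring the most care is the legitimacy of the decomposition: one must verify that conditioning on the first block and on the time-reversed last block leaves the middle block distributed as a free exp-minus-one walk, so that the three events may be multiplied. This follows from the Markov property of the walk together with the exchangeability of exponential increments underlying the Dirichlet description in Section~\ref{ss:intro.exp.rw}; the remaining inputs (the two-sided refinement of Lemma~\ref{l:one.side}, and uniformity of the middle-block density in the target displacement) are routine. I note that this argument needs no separate treatment of the regime \s{k\gtrsim A^2}: there the true density decays exponentially in \s{k/A^2} and the stated polynomial bound is far from tight, but it is all that is needed in the sequel.
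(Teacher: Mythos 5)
Your proposal is correct and follows essentially the same route as the paper: split off an initial block (forward exp-minus-one walk from $x$, bounded via the one-sided estimate of Lemma~\ref{l:one.side} applied both at $x$ and, after reflection, at $A-x$), a terminal block viewed in reverse time (a one-minus-exp walk from $y$, bounded the same way), and a middle block handled by a (local) CLT bound $\lesssim \ep/k^{1/2}$ uniform in the target displacement, then multiply and let $\ep\downarrow0$. The only cosmetic difference is that you justify the independence of the three blocks via the Markov property (the mention of exchangeability is unnecessary—independence of increments suffices), whereas the paper packages the same decomposition as three independent walks $X,Y,Z$ and the event inclusion $E_1\cap E_2\cap E_3$.
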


\begin{cor}\label{c:apx.sine}  For \s{k\asymp A^2}, it holds uniformly over \s{x,y} that
	\[p^k_A(x,y)\asymp
		\f{\de_A(x) \de_A(y) }{ A^3}
	\quad\quad\text{and}\quad\quad
	\vph^k_A(x,y) \asymp \f{ \de_A(y)}{A^2}.\]

\begin{proof} Take the same notation as in the proof of Lemma~\ref{l:short.kernel}, and take a constant \s{c\in(0,1/2)} such that \s{A^2\lesssim ck\le (\eta A)^2}. Corollary~\ref{c:two.side} then gives \s{\P(E_1\cap E_2)\asymp \de_A(x)\de_A(y)/k}. Let
	\[
	E_4\equiv \left\{\hspace{-4pt}
	\begin{array}{c}
	\text{\s{Z} goes from \s{0}
		to \s{Y_{ck}-X_{ck} + [-\ep,\ep]}
		in \s{(1-2c)k} steps}\\
	\text{without exiting the interval \s{[-A/3,2A/3]}.}
	\end{array}
	\hspace{-4pt}\right\}
	\]
Conditioned on \s{E_1\cap E_2}, the event \s{E_5} that both \s{X_{ck},Y_{ck}} are in \s{[A/3, 2A/3]} occurs with probability \s{\asymp1}: for \s{\de_A(x),\de_A(y)\gg\log k} this can be deduced directly from the KMT coupling; otherwise one can use the argument from Lemma~\ref{l:one.side} to reduce to the case \s{\de_A(x),\de_A(y)\gg\log k}.
The functional \textsc{clt} gives
	\[\P(E_4 \,|\,
	E_1 \cap E_2 \cap E_5) \asymp
	\ep/(k-2ck)^{1/2}
	\asymp \ep/k^{1/2},\]
so altogether \s{p^k_A(x,y)\gtrsim \de_A(x)\de_A(y)/k^{3/2} \asymp \de_A(x)\de_A(y)/A^3}. Combining with the  upper bound of Lemma~\ref{l:short.kernel} proves \s{p^k_A(x,y) \asymp \de_A(x)\de_A(y)/A^3}. A final application of Corollary~\ref{c:two.side} proves the estimate on the conditional density, \s{\vph^k_A(x,y)\asymp \de_A(y)/A^2}.
\end{proof}
\end{cor}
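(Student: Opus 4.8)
The upper bounds in both displayed estimates are essentially free: Lemma~\ref{l:short.kernel} gives $p^k_A(x,y) \lesssim \de_A(x)\de_A(y)/(k+1)^{3/2}$, and $(k+1)^{3/2} \asymp A^3$ whenever $k \asymp A^2$, while the bound on $\vph^k_A$ then follows by dividing through by the total survival probability once that quantity is pinned down up to constants. So the real content is the matching \emph{lower} bound on $p^k_A$, and I would get it from a three-stage decomposition of the confined length-$k$ path. Fix a constant $c \in (0,1/2)$ with $ck$ in the window $[\Om(A^2), (\eta A)^2]$ where Corollary~\ref{c:two.side} applies, and split $[0,k]$ into $[0,ck]$, $[ck,(1-c)k]$, $[(1-c)k,k]$. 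On the first block require the exp-minus-one walk $X$ from $x$ both to survive in $[0,A]$ and to be in the bulk $[A/3,2A/3]$ at time $ck$; by time-reversal the third block is the same event for an independent one-minus-exp walk $Y$ started from $y$; on the middle block connect the two bulk endpoints by an exp-minus-one increment that stays in $[0,A]$.

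Granting that these end-block survival events place the walk in the bulk with only a constant-factor loss, Corollary~\ref{c:two.side} supplies survival probabilities $\asymp \de_A(x)/\sqrt{k}$ and $\asymp \de_A(y)/\sqrt{k}$, independent as they are events on the separate walks $X$ and $Y$, so the first two blocks contribute $\asymp \de_A(x)\de_A(y)/k$. For the middle block, conditioned on both relevant positions lying in $[A/3,2A/3]$ the required displacement is $O(A)$ over a time $(1-2c)k \asymp A^2$ subject to confinement in $[0,A]$ --- a Brownian confinement event of bounded aspect ratio, hence of positive probability, with endpoint density $\asymp \ep/\sqrt{k}$ by the functional \textsc{clt}. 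Multiplying the three factors and letting $\ep \downarrow 0$ yields $p^k_A(x,y) \gtrsim \de_A(x)\de_A(y)/k^{3/2} \asymp \de_A(x)\de_A(y)/A^3$, which together with Lemma~\ref{l:short.kernel} gives the first estimate. Integrating this lower bound over $y \in [0,A]$ shows the total survival probability from $x$ is $\gtrsim \de_A(x)/A$, while the reverse bound follows from Corollary~\ref{c:two.side} applied over the first $ck$ steps; dividing the two-sided bound on $p^k_A$ by the resulting $\asymp \de_A(x)/A$ gives $\vph^k_A(x,y) \asymp \de_A(y)/A^2$.

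The one genuinely delicate point --- and the step I expect to be the main obstacle --- is the claim invoked above that, conditioned on surviving in $[0,A]$ for $ck$ steps, the walk lies in $[A/3,2A/3]$ with probability $\asymp 1$; deriving this from the conditional density $\vph^{ck}_A$ would be circular with the estimate we are after. I would argue it via the KMT coupling instead. When $\de_A(x) \gg \log k$, the walk $X$ tracks a standard Brownian motion to within $O(\log k)$ through time $ck$, so both survival and bulk-membership transfer up to constants to the corresponding statements for Brownian motion confined to $[0,A]$ over a time $\asymp A^2$, which are classical. When $x$ lies within $O(\log A)$ of an endpoint of $[0,A]$, I would first run the walk for $O((\log A)^2)$ steps and use the one-sided estimate of Lemma~\ref{l:one.side} to see that, conditioned on not yet having died, it has moved to distance $\gg \log A$ from the boundary with probability $\asymp 1$, which reduces matters to the previous case. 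The remainder is a routine assembly of these inputs.
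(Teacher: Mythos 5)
Your proposal is correct and follows essentially the same route as the paper's proof: split the confined path into two end blocks handled by Corollary~\ref{c:two.side}, localize the endpoints of those blocks in the bulk $[A/3,2A/3]$ via the KMT coupling (with the Lemma~\ref{l:one.side} argument to reduce the near-boundary case), connect through the middle block with the functional \textsc{clt}, and then divide by the survival probability (again Corollary~\ref{c:two.side}) to get the conditional density. The only differences are cosmetic (your explicit integration step for the survival probability versus the paper's direct appeal to Corollary~\ref{c:two.side}).
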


In preparation for the lemma that follows, observe that the \s{\mathrm{Gam}(t)} density \s{f_t} satisfies
	\beq\label{e:one.over.x.ubd}
	f_t(t+x)
	\le 1/|x+1|
	\text{ for all \s{t\ge1,x\ge -t}}.
	\eeq
For \s{t=1} it is easy to verify that \s{f_1(1+x)= e^{-1-x} \le 1/|x+1|}, uniformly over all \s{x\ge -1}. For \s{t\ge2}, note that \s{f_t(t+x)} is unimodal in \s{x}, with the unique mode at \s{x=-1}. Since the density \s{f_t} integrates to \s{1}, for any \s{x\ge-t} we have
	\[
	1\ge \int_0^{|x+1|} f_t(t-1+u)\,du
	\ge |x+1| f_t(t+x),
	\]
and rearranging proves \eqref{e:one.over.x.ubd}.

\begin{lemma}\label{l:large.jump} It holds uniformly over \s{A,k,x,y} that
	\[p^k_A(x,y)\lesssim\left\{
	\hspace{-4pt}
	\begin{array}{rl}
	(x+1)(A-y+1) / |x-y|^{3} &\text{for } x<y; \\
	(A-x+1)(y+1) / |x-y|^{3} &\text{for } x>y. \\
	\end{array}\right.\]

\begin{proof} Assume \s{0\le x < y \le A}; the case \s{x>y} follows by a symmetric argument. Define \s{x' \equiv x+(y-x)/4}, \s{y' \equiv y-(y-x)/4}; note that we need only consider \s{y-x} exceeding a large constant. Take exp-minus-one walks \s{X} and \s{Z} started from \s{X_0=x} and \s{Z_0=0} respectively, and take a one-minus-exp walk \s{Y} started from \s{Y_0=y}, with \s{X,Y,Z} mutually independent. Define the stopping times \s{\si\equiv\min\set{t\ge0 :X_t\notin[0,x']}} and \s{\tau\equiv\min\set{t\ge0 :Y_t\notin [y',A]}}. The probability for an exp-minus-one random walk to go from \s{x} to \s{[y-\ep,y+\ep]} in \s{k} steps without exiting \s{[0,A]} can be upper bounded by the probability of the intersection of three events,
	\[\begin{array}{l}
	E_1 = \set{ X_\si>x'}\,,\\
	E_2 = \set{ Y_\tau < y'}\,,\\
	E_3=\set{ \si+\tau \le k} \cap
	\set{Z_{k-(\si+\tau)} \in
	 Y_\tau-X_\si + [-\ep,\ep] }\,.
	\end{array}\]
It is clear from Lemma~\ref{l:exit.prob} that the stopping time \s{\sigma} is  integrable, so Wald's identity gives \s{x=\E X_\sigma}. Rearranging gives
	\[
	\P(E_1) =\f{x-a}{b-a}
	\text{ where }
	a\equiv \E[X_\sigma\,|\,X_\sigma<0]
	\text{ and }
	b\equiv \E[X_\sigma\,|\,X_\sigma>x'].
	\]
If \s{X} is either the exp-minus-one or one-minus-exp walk then \s{-1\le a\le0} and \s{x'\le b\le x'+1}, so \s{\P(E_1) \lesssim (x+1)/x'\lesssim (x+1)/(y-x)},
and similarly \s{\P(E_2) \lesssim (A-y+1)/(y-x)}.
Conditioned on \s{E_1\cap E_2}, we have \s{y'-1\le Y_\tau \le y'} a.s., while \s{X_\si-x'} is a standard exponential random variable. For \s{X_\si-x'} large we take the crude bound
	\[\P(  E_3;  X_\si-x' > (y-x)/4 )
	\le
	\max_{t\le k}
	\max_{u\in\R}
		\f{\P(Z_t \in u + [-\ep,\ep])}{e^{(y-x)/4}}
	\lesssim \f{\ep}{ e^{(y-x)/4} }
	\lesssim \f{\ep}{y-x}.
	\]
For smaller values of \s{X_\si-x'} we instead bound
	\[\P( E_3;  X_\si-x' \le (y-x)/4  )
	\le \max_{t\le k}
	\max_{ 0\le u\le 1+(y-x)/4 }
	\P(Z_t \in (y'-x')-u
		+ [-\ep,\ep]).
	\]
which is again \s{\lesssim \ep/(y-x)}
by \eqref{e:one.over.x.ubd}. Multiplying the probabilities together gives the claimed bound.
\end{proof}	
\end{lemma}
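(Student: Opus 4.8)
Assume $0\le x<y\le A$ and set $D\equiv y-x$, $x'\equiv x+D/4$, $y'\equiv y-D/4$; the case $x>y$ is symmetric (by time reversal, or by the reflection $X\mapsto A-X$, which interchanges the exp‑minus‑one and one‑minus‑exp walks and preserves $[0,A]$). When $D$ is bounded the claim already follows from Lemma~\ref{l:short.kernel}, since $p^k_A(x,y)\lesssim\de_A(x)\de_A(y)/(k+1)^{3/2}\le(x+1)(A-y+1)\asymp(x+1)(A-y+1)/D^3$, so I may assume $D$ exceeds any fixed constant. The guiding picture is that a path from $x$ to $y$ that stays in $[0,A]$ must accomplish three hard tasks: climb out of the floor strip $[0,x']$ (it must, since it ends at $y>x'$); exit the ceiling strip $[y',A]$ through the bottom (reading the path backwards, since it starts at $x<y'$); and, in the time left over, have an auxiliary free walk cross the gap of width $\asymp D$ between these two strips. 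Each strip event will cost a gambler's‑ruin factor, and the gap crossing a further factor of order $1/D$.

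Following the coupling device in the proof of Lemma~\ref{l:short.kernel}, take mutually independent walks $X,Y,Z$: $X$ an exp‑minus‑one walk from $x$, with $\sigma$ its first exit time from $[0,x']$; $Y$ a one‑minus‑exp walk (the time reversal of exp‑minus‑one) from $y$, with $\tau$ its first exit time from $[y',A]$; and $Z$ an exp‑minus‑one walk from $0$. Then
\[
p^k_A(x,y)\le\lim_{\ep\downarrow0}(2\ep)^{-1}\,\P(E_1\cap E_2\cap E_3),
\]
with $E_1=\{X_\sigma>x'\}$, $E_2=\{Y_\tau<y'\}$, and $E_3=\{\sigma+\tau\le k\}\cap\{Z_{k-\sigma-\tau}\in Y_\tau-X_\sigma+[-\ep,\ep]\}$; cutting at $x'$ and $y'$ rather than at $x$ and $y$ is what leaves a macroscopic gap for the piece $Z$ to span.

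The strip events are gambler's‑ruin estimates. The exit time $\sigma$ from $[0,x']$ has exponential tails (Lemma~\ref{l:exit.prob}) and so is integrable, and the exp‑minus‑one walk is a mean‑zero martingale, so optional stopping gives $x=\E X_\sigma$. On $E_1^c$ we have $X_\sigma\in[-1,0)$ because steps are $\ge-1$; on $E_1$ the overshoot $X_\sigma-x'$ is \emph{exactly} a standard exponential, by the memoryless property of the step law, whence $\E[X_\sigma\mid E_1]=x'+1$. Solving the resulting linear identity for $\P(E_1)$ gives $\P(E_1)\lesssim(x+1)/(x'+1)\lesssim(x+1)/D$. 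Running the same computation for $A-Y$---again an exp‑minus‑one walk, started from $A-y$ and exiting $[0,A-y']$ through the top---gives $\P(E_2)\lesssim(A-y+1)/(A-y'+1)\lesssim(A-y+1)/D$, with $y'-Y_\tau$ a standard exponential on $E_2$.

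The gap crossing is the crux, and the step I expect to be the main obstacle. Since $\sigma,\tau$ depend only on $X,Y$ while $Z$ is independent, $\P(E_1\cap E_2\cap E_3)=\E\big[\Ind{E_1}\Ind{E_2}\,\P(E_3\mid X,Y)\big]$. On $E_1\cap E_2$ the displacement equals $Y_\tau-X_\sigma=D/2-u$, where $u\equiv(X_\sigma-x')+(y'-Y_\tau)$ is a sum of two independent standard exponentials, so $\P(u>D/4\mid E_1\cap E_2)\lesssim e^{-\Omega(D)}\lesssim 1/D$. Since $Z_t$ has the law of $\mathrm{Gam}(t)-t$, inequality \eqref{e:one.over.x.ubd} bounds its density at any point by $1$, and by $1/(v+1)\lesssim 1/D$ at any point $v\ge D/4$; hence $\P(E_3\mid X,Y)\lesssim\ep$ always, and $\P(E_3\mid X,Y)\lesssim\ep/D$ on $\{u\le D/4\}$ (where the displacement lies in $[D/4,D/2]$). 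Combining,
\[
\P(E_1\cap E_2\cap E_3)\lesssim\ep\,\P(E_1)\,\P(E_2)\Big[\tfrac1D+\P(u>D/4\mid E_1\cap E_2)\Big]\lesssim\ep\,\frac{(x+1)(A-y+1)}{D^3},
\]
and dividing by $2\ep$ and letting $\ep\downarrow0$ gives the lemma. The reason this last step is the delicate one is that the number of remaining steps $k-\sigma-\tau$ is entirely uncontrolled, so the usual local‑CLT bound $\ep/\sqrt{k-\sigma-\tau}$ on the density of $Z$ is useless when $\sigma+\tau$ is close to $k$; what rescues the argument is precisely that \eqref{e:one.over.x.ubd} supplies a Gamma density bound that is both strong enough (order $1/D$ at distance $\asymp D$ above the mean) and uniform in the shape parameter.
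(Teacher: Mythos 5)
Your proposal is correct and takes essentially the same route as the paper: the same three-walk decomposition $X,Y,Z$ with stopping times $\sigma,\tau$, the same events $E_1,E_2,E_3$, the same Wald/optional-stopping gambler's-ruin bounds for $\P(E_1)$ and $\P(E_2)$, and the same use of \eqref{e:one.over.x.ubd} after splitting on the size of the overshoot to handle the gap-crossing density of $Z$. The only divergence is that you treat the undershoot $y'-Y_\tau$ as an exponential variable rather than bounded by $1$ (as the paper asserts on $E_2$), which is in fact the more careful reading since the one-minus-exp walk has unbounded downward jumps, and your handling of the sum of the two exponential overshoots absorbs this without changing the final bound.
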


\subsection{Estimates for long time scales}
Recall from \eqref{e:delta.A-1} and \eqref{e:delta.A-2} the
definitions of \s{\lm_A,\vph_A,\de_A}.

\begin{lemma}\label{l:long.kernel} For \s{k\gtrsim A^2}, it holds uniformly over \s{x,y} that
	\[p^k_A(x,y)\asymp (\lm_A)^k
		\f{\de_A(x) \de_A(y) }{ A^3}
	\quad\text{and}\quad
	\vph^k_A(x,y) \asymp \f{\de_A(y)}{A^2}.\]

\begin{proof} Take \s{t\le k} with \s{A^2\lesssim t\le (\eta A)^2}, and run the walk for \s{t} time steps: by Corollary~\ref{c:two.side}, the walk survives in \s{[0,A]} up to time \s{t} with probability \s{\asymp \de_A(x)/t^{1/2} \asymp \de_A(x)/A}. By Corollary~\ref{c:apx.sine}, the density at \s{X_t=z} conditioned on survival is \s{\vph^t_A(x,z) \asymp \de_A(z)/A^2}. In particular we can find a large absolute constant \s{C} such that
	\[ C^{-1} \vph^{}_A(z)
	\le \vph^t_A(x,z)
	\le  C \vph^{}_A(z).\]
Therefore, if we start from the distribution \s{\vph^t_A(x,\cdot)} and evolve the walk forward for \s{k-t} steps, the terminal density at \s{X_k=y} (conditioned on survival up to time \s{t}) will be sandwiched between \s{C^{-1} (\lm_A)^{k-t}\vph_A(y)} and \s{C (\lm_A)^{k-t} \vph_A(y)}. Both lower and upper bounds agree up to constant factors with \s{(\lm_A)^k \de_A(y)/A^2}, and multiplying with the probability of survival up to time \s{t} proves the first estimate \s{p^k_A(x,y)\asymp (\lm_A)^k\de_A(x)\de_A(y)/A^3}. Integrating over \s{y} proves that the probability of survival up to time \s{k} is \s{\asymp (\lm_A)^k \de_A(x)/A}, therefore \s{\vph^k_A(x,y) \asymp \de_A(y)/A^2}.
\end{proof}
\end{lemma}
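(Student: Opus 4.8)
The plan is to bootstrap the long-time kernel estimate from the short-time estimate of Corollary~\ref{c:apx.sine} by a burn-in argument. Write $P$ for the sub-Markov kernel of the exp-minus-one walk killed on exiting $[0,A]$, and for a density $\mu$ on $[0,A]$ let $\mu P^m$ denote the unnormalized density at time $m$ of the killed walk started from $\mu$, i.e.\ $(\mu P^m)(y)=\int_0^A\mu(z)\,p^m_A(z,y)\,dz$. The idea is: run the walk for a window of length $t\asymp A^2$; by Corollary~\ref{c:apx.sine} the conditioned law $\vph^t_A(x,\cdot)$ is then comparable, uniformly in the start $x$, to the principal left eigenfunction $\vph_A$; then evolve forward for the remaining $k-t$ steps using that $\vph_A$ is, by definition, the left eigenfunction of $P$ for eigenvalue $\lm_A$, so $\vph_A P^{k-t}=\lm_A^{k-t}\vph_A$.

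First I would fix $t$ with $A^2\lesssim t\le(\eta A)^2$, so that both Corollary~\ref{c:two.side} and Corollary~\ref{c:apx.sine} apply at time $t$. Corollary~\ref{c:two.side} gives that the walk started from $x$ survives in $[0,A]$ for $t$ steps with probability $\asymp\de_A(x)/t^{1/2}\asymp\de_A(x)/A$, uniformly in $x$ since $\de_A(x)\lesssim A\asymp t^{1/2}$. Corollary~\ref{c:apx.sine} gives $\vph^t_A(x,z)\asymp\de_A(z)/A^2$ uniformly in $x,z$, and since $\vph_A(z)\asymp\de_A(z)/A^2$ by \eqref{e:delta.A-2}, there is an absolute constant $C$ with
\[C^{-1}\vph_A(z)\le\vph^t_A(x,z)\le C\vph_A(z)\qquad\text{for all }x,z\in[0,A].\]

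Next I would propagate this comparison forward. Since $P$ has a nonnegative kernel, the map $\mu\mapsto\mu P^{k-t}$ is monotone, so the displayed sandwich together with $\vph_A P^{k-t}=\lm_A^{k-t}\vph_A$ yields
\[C^{-1}\lm_A^{k-t}\vph_A(y)\le\bigl(\vph^t_A(x,\cdot)\,P^{k-t}\bigr)(y)\le C\lm_A^{k-t}\vph_A(y).\]
By \eqref{e:delta.A-1} and $t\asymp A^2$ we have $\lm_A^t=\exp\{-[1+O(1/A)]\pi^2 t/(2A^2)\}\asymp1$, hence $\lm_A^{k-t}\asymp\lm_A^k$; combined with $\vph_A(y)\asymp\de_A(y)/A^2$ this gives $\bigl(\vph^t_A(x,\cdot)P^{k-t}\bigr)(y)\asymp\lm_A^k\de_A(y)/A^2$. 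The Chapman--Kolmogorov identity for the killed kernel gives $p^k_A(x,y)=\P_x(\text{survives to }t)\cdot\bigl(\vph^t_A(x,\cdot)P^{k-t}\bigr)(y)$, and multiplying by the survival probability $\asymp\de_A(x)/A$ yields the first claim $p^k_A(x,y)\asymp\lm_A^k\de_A(x)\de_A(y)/A^3$. Finally, integrating over $y$ and using $\int_0^A\de_A(y)\,dy\asymp A^2$ shows the survival probability to time $k$ is $\asymp\lm_A^k\de_A(x)/A$, and dividing recovers $\vph^k_A(x,y)=p^k_A(x,y)/\P_x(\text{survives to }k)\asymp\de_A(y)/A^2$.

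The one genuinely delicate point — the step I would be most careful about — is the monotone-propagation argument: we know the time-$t$ conditioned law only up to a constant factor of $\vph_A$, not exactly, and we must use positivity of $P$ to see that this comparison is preserved with the same constant $C$ over the remaining $k-t$ steps, rather than invoking any finer spectral information. This is precisely what lets us avoid a quantitative spectral-gap estimate for $P$: all that is needed is the qualitative ``burn in to within a constant factor of $\vph_A$'' supplied by Corollary~\ref{c:apx.sine}. The remainder — uniformity in $x,y$ including near the endpoints of $[0,A]$, the replacement $\lm_A^{k-t}\asymp\lm_A^k$, and $\int_0^A\de_A\asymp A^2$ — is routine bookkeeping given \eqref{e:delta.A-1}, \eqref{e:delta.A-2}, and the hypothesis $k\gtrsim A^2$.
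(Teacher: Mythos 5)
Your proposal is correct and follows essentially the same route as the paper: burn in for $t\asymp A^2$ steps using Corollaries~\ref{c:two.side} and~\ref{c:apx.sine} to compare the conditioned time-$t$ law with $\vph_A$, then propagate via the left-eigenfunction relation and multiply by the time-$t$ survival probability. Your explicit appeals to monotonicity of the positive kernel and to $\lm_A^t\asymp1$ for $t\lesssim A^2$ just spell out steps the paper leaves implicit.
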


\begin{rmk}\label{r:monotone}
Since \s{p^k_A(A/2,A/2)} is nondecreasing in \s{A}, an immediate consequence of Lemma~\ref{l:long.kernel} is that \s{\lm_A} is nondecreasing in \s{A}.
\end{rmk}

Recall that \s{(\lm_A)^k\asymp1} for \s{k\lesssim A^2}. It therefore holds uniformly over all \s{A,k,x,y} that
	\begin{equation}
	\label{e:all.times.kernel.ubd}
	p^k_A(x,y)
	\lesssim
	\f{(\lm_A)^k \de_A(x)\de_A(y)}{
		(A^2 \wedge (k+1))^{3/2} }\quad
	\begin{array}{l}
	\text{by Lemma~\ref{l:short.kernel}
	for \s{k\lesssim A^2},}\\
	\text{and Lemma~\ref{l:long.kernel}
	for \s{k\gtrsim A^2}.}
	\end{array}
	\end{equation}
For \s{A-x-y\asymp A} we also have the bounds
	\begin{equation}
	\label{e:all.times.large.jump}
	\left.\begin{array}{l}
	p^k_A(x,A-y)\\
	p^k_A(A-x,y)
	\end{array}\hspace{-6pt}\right\}
	\lesssim
	\f{(\lm_A)^k(x+1)(y+1)}{A^3}\quad
	\begin{array}{l}
	\text{by
	Lemma~\ref{l:large.jump}
	for \s{k\lesssim A^2},}\\
	\text{and Lemma~\ref{l:long.kernel}
	for \s{k\gtrsim A^2}.}
	\end{array}
	\end{equation}
In both \eqref{e:all.times.kernel.ubd} and \eqref{e:all.times.large.jump} the \s{\lesssim} can be replaced with \s{\asymp} in the regime \s{k\gtrsim A^2}.

We conclude with our estimates for the exp-minus-one \s{k}-bridge, which we recall is an exp-minus-one walk conditioned to return to the origin at time \s{k}. We first estimate the probability \s{R^k_A} that this process has range at most \s{A},
	\[R^k_A
	\equiv
	\lim_{\ep\downarrow0}
	\f{\P( |\range(X)|\le A;
		|X_k|\le\ep )}{\P( |X_k|\le\ep )},
	\quad
	\text{\s{X} an exp-minus-one walk.}
	\]

\begin{lemma}\label{l:bridge}
For \s{k\gtrsim A^2},
the exp-minus-one \s{k}-bridge
has range at most \s{A} with probability
	\[ R^k_A
	\asymp (\lm_A)^k \f{k^{3/2}}{A^3}.
	\]

\begin{proof} Let \s{X} be an exp-minus-one walk started from \s{X_0=0}. Decompose \s{R^k_A = R^k_{A/4} + R^k_{A,\ge}}, where \s{R^k_{A/4}} is the contribution from the event that the range of \s{X} is smaller than \s{A/4}, while \s{R^k_{A,\ge}} is the contribution from the event that the range is in \s{[A/4,A]}. In order to have range less than \s{A/4} the walk must certainly stay confined within distance \s{A/4} of the origin, so
	\[ R^k_{A/4}
	\le \lim_{\ep\downarrow0}
		\f{\P(
		(X_{0:k})\subseteq [-A/4,A/4];
		|X_k|\le\ep)}
		{\P(|X_k|\le\ep)}
	\asymp k^{1/2} p^k_{A/2}(A/4,A/4)
	\lesssim k^{1/2} (\lm_{A/2})^k / A\]
by \eqref{e:all.times.kernel.ubd}.
For \s{k\gtrsim A^2}
the right-hand side is
\s{\lesssim k^{3/2} (\lm_A)^k/A^3}.

For larger \s{H} we make a more precise calculation.
Let \s{S,T\in[0,k)} denote the times where the walk achieves its minimum and maximum respectively:
the contribution to \s{R^k_{A,\ge}} from \s{S<T} is
	\begin{align*}
	&\asymp
	\int_{A/4}^A
	\lim_{\ep\downarrow 0}
	\f{
	\P(\range(X|_{[0,k]})\in dH;
	S<T;
	|X_k|\le\ep  )
	}{\P(|X_k|\le\ep)}
	\,dH\\
	&\asymp
	k^{1/2}
	\sum_{s,t=0}^{k-1}
	\Ind{s<t}
	\int_{A/4}^A
	p^{t-s}_H(0,H)
	p^{k-t+s}_H(H,0)
	\,dH
	\lesssim
	k^{5/2} \int_0^{3A/4}
		\f{(\lm_{A-\Delta})^k}{(A-\Delta)^6}
		\,d\Delta
	\end{align*}
where in the final step we applied \eqref{e:all.times.large.jump} and made the change of variables \s{H=A-\Delta}. From the expansion \eqref{e:delta.A-1} and the monotonicity of \s{\lambda_A}
(Remark~\ref{r:monotone}) we have
	\beq\label{e:change.in.lm.of.A}
	\f{\lm_{A-\Delta}}{\lm_A}
	\leq \max\Big\{1,\exp\Big\{ \f{-\Theta(\Delta) + O(1)}{A^3} \Big\}\Big\},
	\eeq
so we find that the contribution to \s{R^k_{A,\ge}} from \s{S<T} is
	\[
	\lesssim k^{5/2}\f{(\lm_A)^k}{A^6}
		\int_0^{3A/4}
	\Big(\f{\lm_{A-\Delta}}{\lm_A}\Big)^k \,d\Delta
	\asymp k^{3/2}\f{(\lm_A)^k}{A^3}.
	\]
The contribution to \s{R^k_{A,\ge}} from \s{S>T} has the same value, so the upper bound follows. The lower bound can be obtained in a similar manner, but summing only over pairs \s{s<t} with \s{t-s\gtrsim A^2} and \s{k-t+s\gtrsim A^2} and applying the lower bound from Lemma~\ref{l:long.kernel} (see the comment below \eqref{e:all.times.large.jump}).
\end{proof}
\end{lemma}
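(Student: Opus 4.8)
The plan is to decompose the bridge event according to the size of the range, treating small ranges crudely and ranges of order $A$ precisely. Write $R^k_A = R^k_{A/4} + R^k_{A,\ge}$ as in the statement, where the first term is the contribution from ranges below $A/4$ and the second is the contribution from ranges in $[A/4,A]$. For the small-range piece, confinement within $[-A/4,A/4]$ is a necessary condition, so we may bound $R^k_{A/4}$ by a conditioned probability that the walk stays in $[-A/4,A/4]$ and returns near the origin at time $k$; after the (now standard) trick of dividing by $\P(|X_k|\le\ep)\asymp k^{-1/2}$, this becomes $\asymp k^{1/2}\,p^k_{A/2}(A/4,A/4)$, and Lemma~\ref{l:long.kernel} (via \eqref{e:all.times.kernel.ubd}) gives $\lesssim k^{1/2}(\lm_{A/2})^k/A$. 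For $k\gtrsim A^2$ the separation between $\lm_{A/2}$ and $\lm_A$ coming from \eqref{e:delta.A-1} makes this negligible compared to the target $k^{3/2}(\lm_A)^k/A^3$.

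The heart of the argument is the range-$\asymp A$ piece. Here I would condition on the times $S$ and $T$ at which the walk attains its running minimum and maximum, and on the exact value $H\in[A/4,A]$ of the range. By symmetry it suffices to handle $S<T$ and double. Given $S<T$ and range $H$, the walk must travel from its minimum (reset to height $0$) up to height $H$ over the interval $[S,T]$ and then back down to $0$ over the wrapped-around interval of length $k-(T-S)$, never leaving $[0,H]$ in either stretch; summing the product of two killed transition densities $p^{t-s}_H(0,H)\,p^{k-t+s}_H(H,0)$ over $s<t$ and integrating over $H$ (again after dividing by $\P(|X_k|\le\ep)\asymp k^{-1/2}$, which contributes the leading $k^{1/2}$, and counting the $\asymp k^2$ pairs $(s,t)$) gives the displayed expression $\lesssim k^{5/2}\int_0^{3A/4}(\lm_{A-\Delta})^k/(A-\Delta)^6\,d\Delta$ after substituting $H=A-\Delta$ and applying \eqref{e:all.times.large.jump} (valid since both endpoints $0$ and $H$ are within $O(A)$ of the boundary, indeed $A-0-H\asymp A$ when $H\le 3A/4$; the complementary sub-range $H\in[3A/4,A]$ is handled the same way with roles of endpoints adjusted). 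Finally one pulls out $(\lm_A)^k/A^6$ and uses the ratio bound \eqref{e:change.in.lm.of.A}, which follows from the expansion \eqref{e:delta.A-1} together with the monotonicity of $\lm_A$ (Remark~\ref{r:monotone}): the integral $\int_0^{3A/4}(\lm_{A-\Delta}/\lm_A)^k\,d\Delta$ is $\asymp A^3/k$ because $(\lm_{A-\Delta}/\lm_A)^k\le\exp\{(-\Theta(\Delta)+O(1))k/A^3\}$ and $k\gtrsim A^2$, so the geometric-type decay kicks in on scale $\Delta\asymp A^3/k$. This collapses $k^{5/2}(\lm_A)^k/A^6$ times $A^3/k$ to the desired $k^{3/2}(\lm_A)^k/A^3$.

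For the matching lower bound I would run the same decomposition but only retain pairs $s<t$ with both $t-s\gtrsim A^2$ and $k-t+s\gtrsim A^2$ (there are still $\asymp k^2$ such pairs once $k\gg A^2$, and if $k\asymp A^2$ one still has a positive-density set of them), and restrict $\Delta$ to an interval of length $\asymp A^3/k$ near $0$ so that $(\lm_{A-\Delta}/\lm_A)^k\asymp 1$; on that range Lemma~\ref{l:long.kernel} gives the two-sided estimate $p^{t-s}_H(0,H)\asymp(\lm_H)^{t-s}\cdot A/H^3\asymp(\lm_A)^{t-s}/A^2$ (using $H\asymp A$ and $\de_H(0)=\de_H(H)=1$), and similarly for the return leg, so the summand is $\asymp(\lm_A)^k/A^4$ up to the ratio factors; summing over the $\asymp k^2$ pairs, integrating $\Delta$ over its length-$A^3/k$ window, and multiplying by the $k^{1/2}$ from the bridge normalization reproduces $k^{3/2}(\lm_A)^k/A^3$ from below. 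The main technical obstacle is the clean passage \eqref{e:change.in.lm.of.A}: one needs the first-order expansion of $\lm_A$ to be sharp enough — error $O(1/A^3)$ after the $-\pi^2/(2A^2)$ term, as asserted in \eqref{e:lambda.A.asymptotic}/\eqref{e:delta.A-1} — for the ratio $\lm_{A-\Delta}/\lm_A$ to decay at the rate $\exp\{-\Theta(\Delta)/A^3\}$ that makes the $\Delta$-integral contribute exactly the factor $A^3/k$; a weaker expansion would either lose the constant or fail to localize the integral correctly. Everything else is bookkeeping with the kernel estimates already established in this section.
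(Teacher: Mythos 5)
Your proposal is correct and follows essentially the same route as the paper: the same decomposition into range \s{<A/4} versus range in \s{[A/4,A]}, conditioning on the times of the minimum and maximum, bounding via \s{p^{t-s}_H(0,H)\,p^{k-t+s}_H(H,0)}, the substitution \s{H=A-\Delta} with the ratio bound \eqref{e:change.in.lm.of.A} localizing the \s{\Delta}-integral to scale \s{A^3/k}, and the matching lower bound over pairs with \s{t-s\gtrsim A^2}, \s{k-t+s\gtrsim A^2}. Two small slips worth fixing: \eqref{e:all.times.large.jump} is applied on the interval \s{[0,H]} (where its hypothesis is automatic since \s{H-0-0\asymp H}), not on \s{[0,A]}, so no separate treatment of \s{H\in[3A/4,A]} is needed; and in the lower bound \s{p^{t-s}_H(0,H)\asymp(\lm_H)^{t-s}\de_H(0)\de_H(H)/H^3=(\lm_H)^{t-s}/H^3} (your extra factor of \s{A} is spurious — with the correct \s{1/H^3\asymp 1/A^3} per leg the bookkeeping gives exactly the claimed \s{k^{3/2}(\lm_A)^k/A^3}).
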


We conclude with an estimate on the local time profile for the exp-minus-one bridge, which will be used to control the variance of ``good'' cycles in our second moment argument. Let \s{X} be an exp-minus-one \s{k}-bridge, and let \s{\widetilde{X}} be \s{X} with a constant shift that centers it at \s{A/2}:
	\beq\label{e:X.recentered.around.half.A}
	\widetilde{X}_t\equiv X_t
			+( A-\max X-\min X)/2
	\eeq
We then define the local times of the range-restricted \s{k}-bridge by
	\[\ell^k_A(S)
	\equiv
	\E\bigg[
		\sum_{t=1}^k
		\Ind{\widetilde{X}_t \in S}
	\,\Big|\,|\range(X)|\le A
	\bigg]\quad\quad
		\text{for \s{S\subseteq[0,A]}.}\]

\begin{lemma} \label{l:expected.profile}
For \s{k\gtrsim A^2}, it holds uniformly over subintervals \s{S\subseteq [0,A]} that
the local time of a range-restricted $k$-bridge is bounded by
	\[\ell^k_A(S) \lesssim (1+|S|) \max_{x\in S} \de_A(x)^2.\]

\begin{proof}
Let \s{X} denote an exp-minus-one walk,
and \s{H} its exact range, so that the recentered walk
 \s{\widetilde{X}}
is confined in the interval
\s{I_{A,H} \equiv [(A-H)/2,(A+H)/2]}.
We will compute
	\[
	\ell^k_A(S)
	=
	\lim_{\ep\downarrow0}
	\f{
	\sum_{t=1}^k
	\P( \widetilde{X}_t\in S,
		H\le A;
		|X_k|\le\ep) }
	{\P( H\le A; |X_k|\le\ep)}.
	\]
Lemma~\ref{l:bridge} gives that the denominator is \s{\asymp \ep  (\lm_A)^k k/A^3} (since \s{X} is a walk rather than a bridge, there is a factor of \s{\ep/\sqrt{k}}).  We decompose \s{\ell^k_A = \ell^k_{A,<}+ \ell^k_{A,\ge}} by separating the numerator into the cases \s{H< A/4} and \s{A/4\le H\le A}. For \s{H<A/4}, applying the bound \eqref{e:all.times.kernel.ubd} gives
	\beq\label{e:loc.times.bound.very.small.range}
	\ell^k_{A,<}(S)
	\le \ell^k_{A,<}([0,A])
	\lesssim
	\f{k\, p^k_{A/2}(A/4,A/4)}
		{(\lm_A)^k k/ A^3}
	\lesssim A^2( \lm_{A/2} / \lm_A)^k
	\lesssim A^2
	\lesssim \max_{x\in S}\delta_A(x)^2,
	\eeq
where in the last bound
we used that \s{\widetilde{X}} is confined in
\s{[3A/8,5A/8]}, so in fact \s{\ell^k_{A,<}(S)} is zero
unless \s{\de_A(x)\asymp A} for some \s{x\in S}.

For larger \s{H},
we sum over all possibilities
\s{s,t\in[1,k]}
where the walk achieves its minimum and maximum respectively,
as well as all times \s{u\in[1,k]} where
\s{\widetilde{X}_u\in S}.
For simplicity we consider only the contribution from times \s{s<u<t}; the contribution from other permutations is calculated similarly and will be of the same asymptotic order (up to constants): thus
	\[\ell^k_{A,\ge}(S)
	\lesssim
		\sum_{s,u,t=1}^k
		\f{\Ind{s<u<t}}
			{(\lm_A)^k k/A^3}
		\int_{A/4}^A
		\int_{S \cap I_{A,H}}
		p_H^{u-s}(0,x_H)
		p_H^{t-u}(x_H,H)
		p_H^{k-t+s}(H,0)
		\,dx
		\,dH
	\]
where \s{x_H \equiv x-(A-H)/2}, so that
for \s{x\in I_{A,H}}
we have \s{x_H\in[0,H]} and \s{\de_H(x_H)\le\de_A(x)}.
By symmetry it suffices to consider \s{x_H\le H/2}: then \s{p_H^{u-s}(0,x_H)} can be bounded by \eqref{e:all.times.kernel.ubd}, while \s{p_H^{t-u}(x_H,H)} and \s{p_H^{k-t+s}(H,0)} can be bounded by \eqref{e:all.times.large.jump}.
The total contribution
to \s{\ell^k_{A,\ge}(S)} from times \s{s<u<t}
with \s{u-s\le H^2} is
	\[
	\lesssim
	\f{k^2 }{(\lm_A)^k k/A^3}
		\int_{A/4}^A
		\f{(\lm_H)^k}{H^6}
		\left(\int_S
		\de_A(x)^2 \,dx\right) \,dH
	\lesssim
	|S| \max_{x\in S}\de_A(x)^2,
	\]
while the total contribution
from times \s{s<u<t}
with
\s{u-s\ge H^2} is
	\[
	\lesssim
	\f{k^3 }{(\lm_A)^k k/A^3}
	\int_{A/4}^A \f{(\lm_H)^k}{H^9}
			\left(\int_S
		\de_A(x)^2 \,dx\right) \,dH
	\lesssim
	(k/A^3)|S|\max_{x\in S}\de_A(x)^2.
	\]
Adding these together
and combining with \eqref{e:loc.times.bound.very.small.range}
gives the stated bound.	
\end{proof}
\end{lemma}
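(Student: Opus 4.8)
The plan is to estimate the local time $\ell^k_A(S)$ by decomposing according to the exact range $H$ of the walk and, within each range stratum, by the temporal order of the time $u$ at which $\widetilde X_u\in S$ relative to the times $s,t$ at which the walk achieves its minimum and maximum. The denominator $\P(H\le A;\,|X_k|\le\ep)$ is controlled by Lemma~\ref{l:bridge}, which gives $\asymp\ep(\lm_A)^k k/A^3$; the numerator is a sum over $s,t,u$ of products of transition kernels $p^\cdot_H(\cdot,\cdot)$, each of which is bounded by the uniform estimates \eqref{e:all.times.kernel.ubd} and \eqref{e:all.times.large.jump}. The key observation making this tractable is that after recentering, the walk lives in the interval $I_{A,H}=[(A-H)/2,(A+H)/2]$, so $\de_H(x_H)\le\de_A(x)$ for the shifted coordinate $x_H=x-(A-H)/2$; hence the $H$-dependent bounds can all be replaced by the $H$-independent quantity $\de_A(x)^2$ at the cost of powers of $k$ and $H$ that are then absorbed by the $(\lm_H/\lm_A)^k$ factor via \eqref{e:change.in.lm.of.A}.

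The first step is to split off the contribution $\ell^k_{A,<}$ from ranges $H<A/4$: here the recentered walk is confined near the midpoint $[3A/8,5A/8]$, so this contribution vanishes unless $\de_A(x)\asymp A$ on $S$, in which case the crude bound via $p^k_{A/2}(A/4,A/4)$ and $(\lm_{A/2}/\lm_A)^k\lesssim1$ gives $\ell^k_{A,<}(S)\lesssim A^2\lesssim\max_{x\in S}\de_A(x)^2$. The second step handles $A/4\le H\le A$: fixing the ordering $s<u<t$ (the other five orderings being symmetric up to constants), I would write the numerator as a triple sum over $s,u,t$ and an integral over $H$ and over $x\in S\cap I_{A,H}$ of $p_H^{u-s}(0,x_H)\,p_H^{t-u}(x_H,H)\,p_H^{k-t+s}(H,0)$, using \eqref{e:all.times.kernel.ubd} for the first factor (assuming by symmetry $x_H\le H/2$) and \eqref{e:all.times.large.jump} for the two factors ending at the maximum $H$. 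The factor structure yields roughly $(\lm_H)^k\de_A(x)^2/H^{?}$ with the power of $H$ depending on whether the first time-gap $u-s$ is $\le H^2$ or $\ge H^2$: in the former case each of the three kernels contributes at most $1/H^{3}$ (for $p_H^{u-s}$ one uses the $(k+1)^{-3/2}$ bound, which is $\lesssim 1$ here but one keeps instead the factor $\de_A(x)^2$, giving net $H^{-6}$ from the two large-jump factors), producing $k^2/H^6$ before the $H$-integral; in the latter case an extra power of $k/H^3$ appears from the longer first segment, giving $k^3/H^9$.

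The third step is to carry out the $H$-integration. After dividing by the denominator $(\lm_A)^k k/A^3$ one is left with integrals of the shape $(k/A^{-3})\int_{A/4}^A (\lm_H/\lm_A)^k H^{-p}\,dH$ times $\int_S\de_A(x)^2\,dx$; since $\lm_H$ is nondecreasing in $H$ (Remark~\ref{r:monotone}) and $\lm_{A-\Delta}/\lm_A\le\exp\{(-\Theta(\Delta)+O(1))/A^3\}$ by \eqref{e:change.in.lm.of.A}, raising to the $k\gtrsim A^2$ power makes $(\lm_H/\lm_A)^k$ integrable in $H$ with total mass $O(A^3/k)$ once $k\gtrsim A^3$ — exactly as in the proof of Lemma~\ref{l:bridge}; more carefully one keeps $\int_{A/4}^A(\lm_H/\lm_A)^k\,dH\lesssim A$ always and $\lesssim A^3/k$ when $k\ge A^3$. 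Bookkeeping then gives $\ell^k_{A,\ge}(S)\lesssim |S|\max_{x\in S}\de_A(x)^2$ from the $u-s\le H^2$ part and $\lesssim (k/A^3)|S|\max_{x\in S}\de_A(x)^2$ from the $u-s\ge H^2$ part, and one uses $\int_S\de_A(x)^2\,dx\le |S|\max_{x\in S}\de_A(x)^2$. Finally I would remark that although the $u-s\ge H^2$ term carries a seemingly large factor $k/A^3$, this is precisely the expected order of the local time of a typical point for a bridge that must traverse a range $\asymp A$ in time $k$, so it is not lossy; combining with Step~1 yields $\ell^k_A(S)\lesssim(1+|S|)\max_{x\in S}\de_A(x)^2$ as claimed.

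The main obstacle I anticipate is the bookkeeping of which kernel bound to apply in which time-segment and range regime so that every factor of $k$ is genuinely absorbed: one must be careful that when $u-s\lesssim H^2$ the bound \eqref{e:all.times.kernel.ubd} contributes $(u-s+1)^{-3/2}$ rather than $H^{-3}$, and summing $(u-s+1)^{-3/2}$ over the constrained $s<u$ must be compared against simply summing $1$ and keeping $H^{-3}$; getting the cleanest statement requires choosing, in each regime, the grouping of the three time-lengths $u-s$, $t-u$, $k-t+s$ that makes at least two of them $\gtrsim H^2$ so that the sharp $\asymp$ forms of the long-time estimates apply. The rest is routine once the regimes are correctly organized.
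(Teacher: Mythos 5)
Your proposal reproduces the paper's proof essentially step for step: the same decomposition of the numerator by the exact range $H$ ($H<A/4$ versus $A/4\le H\le A$) and by the ordering $s<u<t$ of the minimum, the visit to $S$, and the maximum; the same kernel bounds \eqref{e:all.times.kernel.ubd} and \eqref{e:all.times.large.jump} with the split $u-s\le H^2$ versus $u-s\ge H^2$; and the same $H$-integration via Remark~\ref{r:monotone} and \eqref{e:change.in.lm.of.A}, arriving at the identical contributions $\max_{x\in S}\de_A(x)^2$, $|S|\max_{x\in S}\de_A(x)^2$, and $(k/A^3)|S|\max_{x\in S}\de_A(x)^2$. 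Your final absorption of the $k/A^3$ factor into the stated bound is the same gloss the paper makes in its last sentence (both tacitly rely on the lemma being invoked with $k\lesssim A^3$, as in Lemma~\ref{l:large.moment}), so nothing in your argument diverges from the paper's.
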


\section{Light cycles and long cycles}
\label{s:atypical}

Recall that for a given cycle \s{\CC} we use \s{\mw(\CC) \equiv n\wgt(\CC)/\len(P)} to denote its mean weight scaled by \s{n}. In this section we prove that w.h.p.\ no cycles in the supercritical regime \s{\mw>1/e} have \s{c_\circ-\mw\gg1/(\log n)^3}.  We also show that in the regime \s{|\mw-c_\circ|\lesssim 1/(\log n)^3}, w.h.p.\ there are no cycles of length \s{\gg(\log n)^3}. The formal statement is as follows:

\begin{theorem}\label{t:first}
In the complete graph or complete digraph with i.i.d.\ unit-rate exponential edge weights,
for all \s{\ep>0} there exists a constant \s{C=C(\ep)>0} such that
\begin{equation}\label{e:t.first.too.light}
	\P\left(
	\exists\text{ cycle }\CC:1/e<\mw(\CC)\leq c_\circ-C/(\log n)^3 
	\right) \le\ep.
\end{equation}
For all \s{\ep>0} and all \s{C_1>0} there exists
	a constant \s{C_2=C_2(\ep,C_1)>0} such that
\begin{equation}\label{e:t.first.light.and.too.long}
	\P\left(
	\begin{array}{c}
	\exists\text{ cycle }\CC :1/e< \mw(\CC)
	\le
	c_\circ+C_1/(\log n)^3\\
	\quad\quad\quad\text{and
	\s{\len(\CC)\geq C_2(\log n)^3}}
	\end{array}
	\right) \le\ep.
\end{equation}
\end{theorem}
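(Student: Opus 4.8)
The plan is to build on the first-moment estimates of Section~\ref{s:rw} — in particular the bounds on $\E Z^k_c(A)$ for $A$-uniform $c$-light $k$-cycles and on $\E\tZ^\ell_c(A)$ for the ``bad'' subpaths that witness a failure of uniformity — together with Lemma~\ref{l:log.squared}, which already rules out short cycles. For the first assertion \eqref{e:t.first.too.light}, I would set $A=A_\circ=\log n$ and $c=c_\circ-C/(\log n)^3$, and decompose the event $\{Z^{\ge A^2}_c>0\}$ exactly as in \eqref{e:cycles.and.paths.lbd}: any $c$-light cycle of length $\ge A^2$ is either $A$-uniform, contributing to $\sum_k \E Z^k_c(A)$, or else contains a subpath whose bridge decreases by $-A+O(1)$ while remaining $(A+O(1))$-uniform, contributing to $\sum_\ell \E\tZ^\ell_c(A)$. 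Plugging in Lemma~\ref{l:unif.cycles} and Lemma~\ref{l:unif.paths} gives a bound $\lesssim \sum_{k\ge1}(ce\lm_A)^k A^{-3}[1+n e^{-A}]$; with $A=\log n$ the bracket is $O(1)$, and by \eqref{e:c.crit} we have $ce\lm_{A_\circ}=c/c_\circ = 1 - Ce/(\log n)^3$, so the geometric series is $\lesssim (\log n)^3/A^3 \cdot 1/C = 1/C$, which is $<\ep$ for $C=C(\ep)$ large. Combining with Lemma~\ref{l:log.squared} to handle $\len(\CC)<A^2$ (noting $c_\circ - C/(\log n)^3 = 1/e[1+O(1/(\log n)^2)]$) completes \eqref{e:t.first.too.light}.

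The second assertion \eqref{e:t.first.light.and.too.long} is the substantial part, and here I would argue as sketched in the proof overview. Fix $c=c_\circ+C_1/(\log n)^3$ and suppose there is a $c$-light cycle $\CC$ of length $k\ge C_2(\log n)^3$. Consider its untilted bridge $W$ (Definition~\ref{d:bridge}), a path of length $k$. The governing principle is that $W$ must either (a) make many sharp decreases — i.e.\ contain many disjoint subpaths whose bridges drop by roughly $-A$ over a restricted range — or (b) spend long stretches confined to windows of width $\asymp A$. Both are costly: each sharp decrease over a subpath of length $\ell$ costs, by Lemma~\ref{l:unif.paths}, a factor $\asymp (ce\lm_A)^\ell A^{-3} n e^{-A}$, and with $A=\log n$ the $ne^{-A}$ is $O(1)$ while $ce\lm_A \le 1 + O(1/(\log n)^2)$ over the relevant lengths; confinement over a long interval of length $m \gg A^2$ costs $(\lm_A)^m$ by Lemma~\ref{l:exit.prob} / Lemma~\ref{l:long.kernel}, which with $A=\log n$ is $\exp\{-\Theta(m/(\log n)^2)\}$. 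The combinatorial step is to chop the bridge into maximal runs within windows and the ``descent'' pieces between them: a cycle of length $k$ with bounded excedance must, to return near zero, accumulate total descent $\asymp$ (number of windows traversed) $\times A$, so either the number of descent pieces is $\gtrsim k/(A\cdot\text{something})$ or some run has length $\gtrsim$ large multiple of $A^2$; in either case the accumulated cost beats the entropy $(n)_k/n^k \cdot (\text{const})^k \le (\text{const})^k$ of choosing the cycle once $k \ge C_2(\log n)^3$ with $C_2=C_2(\ep,C_1)$ large. One then sums over $k\ge C_2(\log n)^3$ (the geometric tail converges since the per-edge factor is $<1$) and applies Markov's inequality.

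The main obstacle is organizing the dichotomy in step (b) cleanly enough that the probability costs multiply across the pieces of the decomposition. The subtlety is that the bridge is a \emph{conditioned} walk (it must return to $-0$, and subpaths inherit correlated endpoints), so one cannot naively multiply independent survival probabilities; the right approach is to expose the bridge piece by piece, at each stage using the kernel bounds \eqref{e:all.times.kernel.ubd} and \eqref{e:all.times.large.jump} (which hold uniformly over all endpoints) to bound the contribution of each descent/confinement segment, and to choose the window width and the threshold separating ``short'' from ``long'' runs so that $ce\lm_A<1$ strictly with room to spare — this is where the choice $A=A_\circ=\log n$ and the precise asymptotic \eqref{e:delta.A-1} for $\lm_A$ are used in an essential way. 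A secondary technical point is bounding the number of ways to choose the locations of the $O(k/A^2)$ break points, which contributes a factor $\binom{k}{O(k/A^2)} \le \exp\{O(k\log A / A^2)\} = \exp\{o(k)\}$ and is therefore absorbed into the exponential gain from the cost estimates; this needs $A$ to be a growing function of $n$, which it is.
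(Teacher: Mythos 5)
Your argument for \eqref{e:t.first.too.light} is correct and coincides with the paper's own proof (the decomposition into $A_\circ$-uniform cycles and near-$A_\circ$ descents, i.e.\ Proposition~\ref{p:too.light}, plus Lemma~\ref{l:log.squared} for lengths below $(\log n)^2$). The gap is in \eqref{e:t.first.light.and.too.long}, where what you have written largely restates the heuristic of the proof overview, and the quantitative skeleton you propose does not close. Writing $c^+\equiv c_\circ+C_1/(\log n)^3$, the decisive comparison is at order $1/(\log n)^3$, not $1/(\log n)^2$: the per-edge entropy is $c^+e=1+\pi^2/[2(\log n)^2]+\Theta(1)/(\log n)^3$, so confinement in a window of width $\log n$ (or $\log n+\Delta$) gives a net per-step factor $c^+e\lm_A\ge1$ and hence no gain at all; a usable gain $(c^+e\lm_{A'})^k\le e^{-k/(A')^3}$ appears only for windows of width $A'\le\log n-\Delta'$ with $\Delta'\asymp C_1$ (condition \eqref{e:condition.on.delta.prime}). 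Likewise a single ``sharp decrease'' of depth $\approx\log n$ carries no cost: by Lemma~\ref{l:unif.paths}, $\E\tZ_{c^+}(A'')\asymp n e^{-A''}=e^{\Theta(\Delta+\Delta')}$ is a constant rather than a small quantity, so a first-moment/Markov bound can only exclude having more than a large \emph{constant} number $Q$ of such descents --- never your proposed threshold of $\gtrsim k/(A\cdot\text{something})$ descents.

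This is exactly where your bookkeeping fails. With at most $Q$ descents the pigeonhole does leave long stretches, but you supply no mechanism forcing those stretches into width-$A'$ windows when the cycle is \emph{not} $(\log n-\Delta)$-uniform; that mechanism is the up-/down-crossing decomposition of Lemma~\ref{l:light.and.too.long}, which places crossing levels $x$ near the bridge minimum and $y$ near $A'$ so that between consecutive crossings the walk is automatically confined to width-$A'$ intervals, and caps the number of crossings at the constant $Q$ via the $\tZ$/Markov argument. The constant cap also disposes of your entropy claim: the position entropy of $\Theta(k/A^2)$ break points is $\exp\{\Theta(k\log\log n/(\log n)^2)\}$, which is \emph{not} absorbed by the true net gain $\exp\{-\Theta(k/(\log n)^3)\}$ (your figure $\exp\{-\Theta(k/(\log n)^2)\}$ for the gain ignores the matching $(c^+e)^k$ entropy), whereas with $q\le Q$ the positional entropy is only $k^Q$ and is killed by the tail sum over $k\ge B(\log n)^3$. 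Finally, you omit the prior reduction to $(\log n+\Delta)$-uniform cycles (Lemma~\ref{c:too.non.unif}, via extracting $\ge\Delta+\Delta'$ distinct subpaths with Lemma~\ref{l:many.subpaths}); without the cap $H\le\log n+\Delta$ on the bridge range, the per-crossing factors (the $\Delta_H+\Delta'$ terms in the paper's proof) are unbounded, and cycles that are very non-uniform are covered by neither branch of your dichotomy.
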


\subsection{First moment for uniform cycles and paths}\label{sec:uniform-first-moment}

We begin with some first moment estimates for uniform cycles and paths. Let
\begin{equation}\label{e:z.uniform}
	\begin{array}{rl}
	Z^k_c(A)  \equiv
	&\hspace{-6pt}
	\text{number of \s{c}-light
		\s{A}-uniform \s{k}-cycles;}\\
	\tZ^k_c(A)
	\equiv &\hspace{-6pt}
	\text{number of \s{k}-paths
		that, for some \s{A''\in[A-2,A]},}\\
	&\text{have \s{c}-excedance \s{-A''}
		and are \s{(c,A'')}-uniform.}
	\end{array}
\end{equation}
We will also denote
	\[Z^{\ge k}_c(A)
	\equiv \sum_{\ell\ge k}Z^{\ell}_c(A),
	\quad\quad
	Z^{}_c(A)
	\equiv Z^{\ge 2}_c(A),\quad\quad
	\tZ^{}_c(A)
	\equiv \sum_{\ell\ge1}
	\tZ^\ell_c(A).\]
The purpose of defining \s{\tZ_c(A)}
is to bound the number of light cycles which do not contribute
to \s{Z_c(A)}. From Definition~\ref{d:unif}, if a cycle \s{\CC} is \s{c}-light but fails to be \s{A}-uniform, then it has a subpath \s{P} with \s{c}-excedance \s{<-A}. The following lemma shows how to extract
further subpaths from \s{P}
that are both light and uniform:

\begin{lemma}\label{l:many.subpaths}
For \s{2\le A'\le A}, given a path \s{P} with \s{c}-excedance \s{<-A}, one can extract at least \s{1+\lfloor A-A' \rfloor} distinct (though not necessarily disjoint) subpaths \s{p \subseteq P}, each contributing to \s{\tZ_c(A')} as defined in \eqref{e:z.uniform}.

\begin{proof} By scaling we can assume \s{c=1}.  Suppose \s{P} has length $k$ and edge weights \s{w_1,\ldots,w_k}.  By assumption, the process \s{W_j=\sum_{i=1}^j(nw_i-1)} goes from \s{W_0=0} to \s{W_k<-A}.  Let
	\[(x_i,y_i) = -(i,i+A'),
	\quad\text{for }
	0\le i\le \lfloor A-A' \rfloor.\]
Since \s{A'\ge2}, \s{y_i+1<x_i-1} for each \s{i}.  Since \s{W} decreases by at most one at each step, for each \s{i} there is some \s{\eta_i} for which \s{W_{\eta_i} \in (x_i-1,x_i]}, and some \s{\tau_i>\eta_i} for which \s{W_{\tau_i} \in (y_i,y_i+1]}.
Let \s{[\bar\eta_i,\bar\tau_i]} be any minimal subinterval of \s{[0,k]} for which \s{W_{\bar\eta_i} \in (x_i-1,x_i]} and \s{W_{\bar\tau_i} \in (y_i,y_i+1]}, and let \s{p_i} be the subpath of \s{P} corresponding to the interval \s{[\bar\eta_i,\bar\tau_i]}.
Since \s{[\bar\eta_i,\bar\tau_i]} is minimal, the subpath \s{p_i} is \s{(W_{\bar\eta_i}-W_{\bar\tau_i})}-uniform, where \s{A'-2\leq W_{\bar\eta_i}-W_{\bar\tau_i} \leq A'}.  Since the intervals \s{(x_i-1,x_i]} are disjoint, the paths \s{p_i} are distinct.
\end{proof}
\end{lemma}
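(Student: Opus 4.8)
The plan is to reduce to the case \s{c=1} by rescaling the edge weights of \s{P} (which converts the \s{c}-excedance into the \s{1}-excedance and \s{(c,\cdot)}-uniformity into \s{(1,\cdot)}-uniformity), and then to argue entirely with the excedance walk \s{W_j=\sum_{i=1}^j(nw_i-1)}, which runs from \s{W_0=0} to \s{W_k<-A}. The single structural feature that makes this work is that \s{W} has downward increments bounded by one, since \s{nw_i\ge0} forces \s{W_j-W_{j-1}\ge-1}; in particular \s{W} cannot descend across a unit interval in a single step, so on its way down from \s{0} to \s{W_k<-A} it must enter each of the bands \s{(-i-1,-i]} and, strictly later, each band \s{(-(i+A'),-(i+A')+1]}, for every integer \s{i} with \s{0\le i\le\lfloor A-A'\rfloor}; here \s{A'\ge2} is exactly what guarantees that the two bands attached to a fixed \s{i} are disjoint.

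For each such \s{i} I would take \s{[\bar\eta_i,\bar\tau_i]\subseteq[0,k]} to be a subinterval, minimal with respect to inclusion, on which \s{W} starts in the top band \s{(-i-1,-i]} and ends in the bottom band \s{(-(i+A'),-(i+A')+1]}, and let \s{p_i} be the corresponding subpath of \s{P}. The crux---and essentially the only real step---is that minimality pins the extremes of \s{W} to the two endpoints: if \s{W_t} were in the top band for some \s{t\in(\bar\eta_i,\bar\tau_i]}, then \s{[t,\bar\tau_i]} would be a strictly smaller valid interval, so \s{W} leaves the top band after \s{\bar\eta_i}; and it cannot climb back above \s{-i} thereafter, since to do so it would have to jump over the top band and then re-descend through it before reaching \s{W_{\bar\tau_i}} (which lies below the top band), again contradicting minimality. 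Hence \s{\max_{[\bar\eta_i,\bar\tau_i]}W=W_{\bar\eta_i}}, and symmetrically \s{\min_{[\bar\eta_i,\bar\tau_i]}W=W_{\bar\tau_i}}. Therefore the range of \s{W} on \s{[\bar\eta_i,\bar\tau_i]} equals \s{W_{\bar\eta_i}-W_{\bar\tau_i}=:A''_i}, which the placement of the bands forces into \s{[A'-2,A']}; so \s{p_i} has \s{1}-excedance exactly \s{-A''_i} and is \s{(1,A''_i)}-uniform, i.e.\ it contributes to \s{\tZ_c(A')}.

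Distinctness is then immediate, since \s{W_{\bar\eta_i}} lies in the band \s{(-i-1,-i]} and these bands are pairwise disjoint across \s{i}; hence the times \s{\bar\eta_i}, the intervals \s{[\bar\eta_i,\bar\tau_i]}, and the subpaths \s{p_i} are all distinct (though, as the statement allows, not necessarily pairwise disjoint), and letting \s{i} range over \s{0,1,\dots,\lfloor A-A'\rfloor} gives the claimed \s{1+\lfloor A-A'\rfloor} subpaths. I expect the minimality argument of the second paragraph to be the only delicate point: a naive choice of \s{\bar\eta_i,\bar\tau_i} only confines \s{W} between levels \s{-i} and \s{-(i+A')} on \s{[\bar\eta_i,\bar\tau_i]}, giving range \s{\le A'}, which is not enough to certify \s{(1,A''_i)}-uniformity for the \emph{recorded} constant \s{A''_i}; matching the range of \s{p_i}'s excedance walk to its absolute excedance is precisely what the minimal-subinterval choice buys. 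The remaining ingredients---the no-skipping property, existence of the band-crossing times, and disjointness of the bands---are routine.
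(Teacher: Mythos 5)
Your proposal is correct and follows essentially the same route as the paper: the same bands $(-i-1,-i]$ and $(-(i+A'),-(i+A')+1]$ for $0\le i\le\lfloor A-A'\rfloor$, the same use of the down-steps-bounded-by-one property to guarantee band crossings, the same minimal-subinterval choice pinning the max/min of $W$ at the endpoints, and the same distinctness argument via disjointness of the top bands. You simply spell out in more detail the minimality argument that the paper asserts in one sentence, and that elaboration is accurate.
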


Applying Lemma~\ref{l:many.subpaths}
with \s{A'=A}, when \s{A\geq 2}, we see that any \s{c}-light cycle either contributes to \s{Z_c(A)},
or has a subpath contributing to \s{\tZ_c(A)}.

\begin{lemma}\label{l:unif.cycles}
For \s{k\gtrsim A^2},
\s{c\lesssim1}, and \s{0<\delta<1},
	\begin{equation}\label{eq:E[Z_c^k(A)]}
	\E [Z^k_c(A)-Z^k_{c(1-\delta)}(A)]
	\asymp
	\f{(n)_k}{n^k} \f{(ce\lm_A )^k
		[1-(1-\delta)^k]}{A^3}.
	\end{equation}
	
\begin{proof}
Recalling Definition~\ref{d:unif}, a \s{k}-cycle with edge weights \s{w_1,\ldots,w_k} summing to \s{k\mw/n} contributes to \s{Z^k_c(A)} if and only if \s{\mw\le c},
and the untilted bridge
with increments \s{(nw_i/\mw-1)}
has range at most \s{A}. Applying \eqref{e:scale} with \s{\bar{s}=\mw}, we see that \s{\E Z^k_c(A)= (\E Z^k_c) R^k_A}, and then \eqref{eq:E[Z_c^k(A)]} follows from Lemmas~\ref{l:basic.first.moment}~and~\ref{l:bridge}.
\end{proof}
\end{lemma}

\begin{lemma}\label{l:unif.paths}
For \s{A\geq2}, \s{c\lesssim1}, and \s{k\ge 1},
	\begin{equation}\label{eq:E[tZ_c^k(A)]}
	\E \tZ^k_c(A)
	\lesssim 
		\f{(ce\lm_A)^k}{A^3} \f{n}{e^A}.
	\end{equation}

\begin{proof}
A \s{k}-path with edge weights \s{w_1,\ldots,w_k} summing to \s{k\mw/n} contributes to \s{\tZ^k_c(A)} if and only if, for some \s{A''\in[A-2,A]}, the process with increments \s{(nw_i/c-1)} goes from \s{0} to \s{-A''} without exiting \s{[-A'',0]}. In particular this implies \s{\mw=c(1-A''/k)}. Making the change of variables \s{A''=A-\Delta}, we have
	\[\E\tZ^k_c(A)
	= (n)_{k+1}
	\int_0^2
	f_k[( k-A+\Delta ) c/n]
	\f{p^k_{A-\Delta}(A-\Delta,0)}
	{f_k( k-A+\Delta )}\,\f{c\,d\Delta}{n}
	\]
where the first factor in the integrand is the probability density of the path weight, and the second factor is the probability that an exp-minus-walk, conditioned to go from \s{A-\Delta} to \s{0} in \s{k} steps, does so without exiting \s{[0,A-\Delta]}. Simplifying gives
	\[\E\tZ^k_c(A)
	\leq (n)_{k+1}\f{c^k}{n^k}
	\int_0^2 e^{k-A+\Delta}
	p^k_{A-\Delta}(A-\Delta,0)
	\,d\Delta
	\lesssim
	\f{n}{e^A}
	\f{(ce\lm_A)^k}{A^3}
	\int_0^2
	\Big(\f{\lm_{A-\Delta}}{\lm_A}\Big)^k \,d\Delta,
	\]
where the last bound follows from \eqref{e:all.times.large.jump}.  The final integrand is $\leq1$, which gives \eqref{eq:E[tZ_c^k(A)]}.
\end{proof}
\end{lemma}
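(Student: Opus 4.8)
The plan is a direct first-moment computation. The complete (di)graph has $(n)_{k+1}$ distinct $k$-paths, so I would write $\E\tZ^k_c(A)=(n)_{k+1}\,\P(\text{a fixed }k\text{-path contributes})$ and estimate the probability. The first task is to unpack the event in \eqref{e:z.uniform}: a $k$-path with edge weights $w_1,\dots,w_k$ contributes precisely when, for some $A''\in[A-2,A]$, its $c$-excedance process $V_j=\sum_{i\le j}(nw_i/c-1)$ ends at $V_k=-A''$ while no subpath has $c$-excedance outside $[-A'',A'']$. Since the excedance of a subpath over $[a,b]$ equals $V_b-V_a$, the uniformity requirement is exactly $\range(V)\le A''$; and since $V_0=0$ and $V_k=-A''$, this forces $V$ to stay inside $[-A'',0]$. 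Note also that $V_k=-A''$ pins down the mean weight, $\mw=c(1-A''/k)$, so the path's total weight is $c(k-A'')/n$ and $A''$ is a deterministic decreasing function of it.

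Next I would condition on the total weight --- equivalently on $\mw$, equivalently on $A''$ --- and apply \eqref{e:scale} with $\bar s=c$ and $\mu=c/n$: conditionally on $\mw$ the process $V$ is an exp-minus-one walk conditioned to sit at $-A''$ at time $k$, so (shifting up by $A''$) the conditional probability that $V$ never leaves $[-A'',0]$ equals $p^k_{A''}(A'',0)/f_k(k-A'')$, the ratio of the range-restricted endpoint-to-endpoint kernel to the unrestricted $\mathrm{Gam}(k)$ density at $k-A''$. Multiplying by the density $f_k(c(k-A'')/n)$ of the path weight at the relevant value, by the Jacobian $c/n$ of the change of variables from total weight to $A''$, and by $(n)_{k+1}$, then substituting $A''=A-\Delta$ with $\Delta\in[0,2]$, yields a closed integral expression for $\E\tZ^k_c(A)$.

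The remaining estimates are mechanical. Using the explicit $\mathrm{Gam}(k)$ density and $e^{-(k-A+\Delta)c/n}\le1$, the density ratio $f_k(c(k-A+\Delta)/n)/f_k(k-A+\Delta)$ is at most $e^{k-A+\Delta}(c/n)^{k-1}$; combined with the Jacobian this turns $(n)_{k+1}$ into at most $nc^k$ (since $(n)_{k+1}/n^k\le n$), leaving $\E\tZ^k_c(A)\lesssim nc^k\int_0^2 e^{k-A+\Delta}\,p^k_{A-\Delta}(A-\Delta,0)\,d\Delta$. I would then feed in the range-restricted kernel bound \eqref{e:all.times.large.jump} at $x=y=0$, which gives $p^k_{A-\Delta}(A-\Delta,0)\lesssim(\lm_{A-\Delta})^k/(A-\Delta)^3\asymp(\lm_{A-\Delta})^k/A^3$; then write $e^k(\lm_{A-\Delta})^k=(e\lm_{A-\Delta})^k$, absorb $e^\Delta\le e^2$ into the implied constant, and use the monotonicity of $\lm_A$ (Remark~\ref{r:monotone}) to replace $\lm_{A-\Delta}$ by $\lm_A$. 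The $\Delta$-integral is then $O((e\lm_A)^k)$, and assembling the factors gives $\E\tZ^k_c(A)\lesssim n(ce\lm_A)^k/(e^A A^3)$, which is \eqref{eq:E[tZ_c^k(A)]}. The bound is vacuous for $k\le A-2$, since then an exp-minus-one $k$-path cannot decrease by $A''$ and $\tZ^k_c(A)=0$.

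The step I expect to require the most care is the kernel input \eqref{e:all.times.large.jump}, which must apply uniformly over all $k\ge1$, including the short scales $k\lesssim A^2$ where $(\lm_A)^k\asymp1$ supplies no decay. On those scales the generic estimate $p^k_{A-\Delta}(A-\Delta,0)\lesssim(k+1)^{-3/2}$ from Lemma~\ref{l:short.kernel} is too weak; one genuinely needs the sharp large-downward-jump bound of Lemma~\ref{l:large.jump}, which reflects that traversing nearly the whole interval $[0,A-\Delta]$ in few steps costs a full factor $1/A^3$ rather than merely $1/k^{3/2}$. Securing that $1/A^3$ is exactly what makes the right-hand side of \eqref{eq:E[tZ_c^k(A)]} small enough to survive the summation over $k$ in \eqref{e:cycles.and.paths.lbd}.
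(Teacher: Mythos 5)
Your proposal is correct and follows essentially the same route as the paper's proof: the same exact integral representation via $(n)_{k+1}$, the conditioning on the excedance $-A''$ (equivalently on $\mw$) with the ratio $p^k_{A-\Delta}(A-\Delta,0)/f_k(k-A+\Delta)$, the same gamma-density simplification, and the same input \eqref{e:all.times.large.jump} together with the monotonicity of $\lm_A$ to conclude. Your added remarks (the equivalence of $(c,A'')$-uniformity with confinement to $[-A'',0]$, vacuity for $k\le A-2$, and why the sharp $1/A^3$ from Lemma~\ref{l:large.jump} is needed at short scales) are accurate but do not change the argument.
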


\subsection{Uniform cycles and light cycles}\label{sec:uniform-light}

We now apply our first moment estimates to argue that in the supercritical regime \s{c>1/e}, cycles cannot be too uniform or too light.  Recall \eqref{e:c.crit}
that we set \s{A_\circ\equiv\log n} and \s{c_\circ \equiv 1/(e\lm_{A_\circ})}.
More generally, we define the relation
	\begin{equation}\label{e:gamma}
	c\,e\lm_A = \exp\{-\gm/A^3\}
	\end{equation}
--- where we can think of \s{c} as being determined by \s{A} and \s{\gm}.
A first consequence of Lemma~\ref{l:unif.cycles}
is that cycles of length at least \s{(\log n)^2}
cannot be too uniform in the targeted regime for the mean weight:

\begin{lemma}\label{p:too.unif}
For all \s{\ep,C>0} there exists \s{\Delta=\Delta(\ep,C)>0}
large enough that
	\[
	\P\left(\begin{array}{c}
		\exists\text{ cycle }\CC:
		\mw(\CC)\le c_\circ+C/(\log n)^3,\\
		\text{\s{\len(\CC)\ge A^2},
		and \s{\CC} is \s{A}-uniform}
		\end{array}
		\right)\le\ep
		\quad\quad\text{for }
		A\equiv \log n-\Delta.
	\]

\begin{proof}
Suppose \eqref{e:gamma} holds with \s{\gm>0}.
Using Markov's inequality with Lemma~\ref{l:unif.cycles} and \eqref{e:gamma},
	\[\P(Z^{\ge A^2}_c(A)>0)
	\le\E Z^{\ge A^2}_c(A)
	\lesssim A^{-3}\sum_{k\ge A^2}e^{-k\gm/A^3}
	\lesssim 1/\gm\,,\]
so by taking \s{\gm\asymp 1/\ep} we can ensure \s{\P(Z^{\ge A^2}_c(A)>0)\le\ep}.
Using the expansion \eqref{e:delta.A-1} for \s{\lambda_A},
we see that by choosing \s{A=\log n-\Delta} with \s{\Delta\asymp C+\gamma},
where \s{\gamma} and \s{C} are fixed as \s{n\to\infty}, we can ensure \s{c=1/(e\lambda_A) \exp(-\gamma/A^3) \geq c_\circ + C/(\log n)^3}.
\end{proof}
\end{lemma}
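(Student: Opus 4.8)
The plan is a first-moment argument. Every cycle counted by the event in the statement contributes to $Z^{\ge A^2}_c(A)$ for a suitable threshold $c$, and Lemma~\ref{l:unif.cycles} shows this count has small expectation provided $ce\lm_A$ is bounded strictly below $1$; the remaining task is then to check that the window $\mw(\CC)\le c_\circ+C/(\log n)^3$ really does lie below such a $c$, which is arranged by choosing the constant $\Delta$ large.

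Concretely, I would fix a parameter $\gm>0$ and, following \eqref{e:gamma}, let $c=c(A,\gm)$ be defined by $ce\lm_A=\exp(-\gm/A^3)$. Any cycle $\CC$ with $\len(\CC)\ge A^2$ that is $c$-light and $A$-uniform contributes to $Z^{\ge A^2}_c(A)=\sum_{k\ge A^2}Z^k_c(A)$, so Markov's inequality gives $\P(Z^{\ge A^2}_c(A)>0)\le\E Z^{\ge A^2}_c(A)$. For each $k\ge A^2$, Lemma~\ref{l:unif.cycles} (letting $\delta\uparrow1$ and using $(n)_k/n^k\le1$) yields $\E Z^k_c(A)\lesssim(ce\lm_A)^k/A^3=A^{-3}e^{-k\gm/A^3}$. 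Since $\gm$ is held fixed while $A=\log n-\Delta\to\infty$, one has $e^{-\gm A^2/A^3}=e^{-\gm/A}\asymp1$ and $1-e^{-\gm/A^3}\asymp\gm/A^3$, so the geometric sum evaluates to $\sum_{k\ge A^2}A^{-3}e^{-k\gm/A^3}\asymp A^{-3}\cdot A^3/\gm\asymp1/\gm$. Taking $\gm\asymp1/\ep$ therefore forces $\P(Z^{\ge A^2}_c(A)>0)\le\ep$.

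It remains to pick $\Delta=\Delta(\ep,C)$ so that this $c$ dominates $c_\circ+C/(\log n)^3$; then the event in the lemma is contained in $\{Z^{\ge A^2}_c(A)>0\}$ and we are done. With $A=\log n-\Delta$, expanding $\lm_A$ via \eqref{e:delta.A-1} gives $c=(e\lm_A)^{-1}e^{-\gm/A^3}=e^{-1}[1+\pi^2/(2A^2)-\gm/A^3+O(1/A^3)]$, and since $1/A^2=(\log n)^{-2}+2\Delta(\log n)^{-3}+O((\log n)^{-4})$ this becomes $c=e^{-1}[1+\pi^2/(2(\log n)^2)+(\pi^2\Delta-\gm)(\log n)^{-3}+O((\log n)^{-3})]$; comparing with $c_\circ=e^{-1}[1+\pi^2/(2(\log n)^2)+O((\log n)^{-3})]$ from \eqref{e:c.crit}, one obtains $c-c_\circ=e^{-1}[\pi^2\Delta-\gm+O(1)](\log n)^{-3}$ with an absolute implied constant. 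Hence it suffices to take $\Delta$ of order $C+\gm\asymp C+1/\ep$, fixed as $n\to\infty$, which guarantees $c\ge c_\circ+C/(\log n)^3$ for all large $n$.

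The argument is essentially bookkeeping once Lemma~\ref{l:unif.cycles} is in hand. The two points requiring mild care are: the geometric sum, where the lower cutoff $k\ge A^2$ contributes the factor $e^{-\gm/A}\asymp1$ (so the sum is $\asymp A^3/\gm$, not something of order $A^{-3}$); and the final comparison, where one must track how decreasing $A$ by the \emph{constant} $\Delta$ produces the decisive $\Theta(\Delta/(\log n)^3)$ shift in $c$ that has to overcome the $C/(\log n)^3$ gap.
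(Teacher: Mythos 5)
Your argument is correct and is essentially the paper's proof: define $c$ through \eqref{e:gamma}, bound $\P(Z^{\ge A^2}_c(A)>0)$ by Markov together with Lemma~\ref{l:unif.cycles} and the geometric sum $\lesssim 1/\gm$, take $\gm\asymp1/\ep$, and then choose $\Delta\asymp C+\gm$ via the expansion \eqref{e:delta.A-1} so that $c\ge c_\circ+C/(\log n)^3$. The only additions are bookkeeping details (the $\delta\uparrow1$ step and the explicit expansion of $1/A^2$) that the paper leaves implicit.
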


We next show how Lemma~\ref{l:unif.cycles} and Lemma~\ref{l:unif.paths} can be combined to rule out cycles that are lighter than the targeted regime
for the mean weight.

\begin{ppn}\label{p:too.light}
For all \s{\ep>0} there exists \s{C=C(\ep)>0} such that
	\[
	\P\left(
		\begin{array}{c}
		\exists\text{ cycle }\CC
		: \mw(\CC) \le c_\circ-C/(\log n)^3\\
		\text{ and }
		\len(\CC)\ge (\log n)^2
		\end{array}
		\right) \le\ep.
	\]

\begin{proof}
From Lemma~\ref{l:many.subpaths}, assuming \s{A\geq 2},
a \s{c}-light \s{k}-cycle
either contributes to \s{Z^k_c(A)},
or has a subpath contributing to
\s{\tZ^\ell_c(A)} for some \s{\ell\le k}.
By Lemmas~\ref{l:unif.cycles}~and~\ref{l:unif.paths},
if we take \s{A=A_\circ} and \s{\gm} a large positive constant, then
	\[\P(Z^{\ge A^2}_c>0)
	\le
	\E Z^{\ge A^2}_c(A)
	+\E \tZ^{}_c(A)
	\lesssim \sum_{\ell\ge1}
		\f{e^{-\gm \ell/A^3}}{A^3}
	\asymp 1/\gm\,,
	\]
so by taking \s{\gm\asymp 1/\ep} we can ensure \s{\P(Z^{\ge A^2}_c>0)\le\ep}.
Since \s{c/c_\circ = \exp(-\gm/A^3)} and \s{c_\circ\asymp1}, the proposition follows with \s{C\asymp 1/\ep}.
\end{proof}
\end{ppn}

\subsection{Non-uniform cycles and long cycles}\label{sec:non-uniform-long}

Recall that in Lemma~\ref{p:too.unif} above we ruled out cycles that are too uniform
in the targeted regime for the mean weight.  We now prove the complementary assertion that cycles cannot be too non-uniform in this regime:

\begin{lemma}\label{c:too.non.unif}
For all \s{\ep,C>0}
there exists \s{\Delta=\Delta(\ep,C)>0} large enough that
	\[\P\left(
	\begin{array}{c}
	\exists\text{ cycle }\CC : \mw(\CC)\le
		c_\circ + C/(\log n)^3\\
	\text{ and \s{\CC}
		is not \s{(\log n+\Delta)}-uniform}
	\end{array}
	\right)
	\le\ep.\]

\begin{proof}
We can set \s{\gamma=1} and \s{A'\equiv\log n-\Delta'} with \s{\Delta'\asymp C} and have $c \equiv 1/(e \lambda_{A'}) \exp(-\gamma/(A')^3) \geq c_\circ + C/(\log n)^3$.  Then Lemma~\ref{l:unif.paths} gives
\[\E \tZ_c(A')\lesssim \frac{n}{(A')^3 e^{A'}} \frac{1}{1-e^{-\gamma/(A')^3}} \asymp n/e^{A'} = e^{\Delta'}\,.\]
Let \s{A=\log n+\Delta \ge A'}.  By Definition~\ref{d:unif} and Lemma~\ref{l:many.subpaths} together with Markov's inequality,
	\[
	\P(Z_c > Z_c(A))
	\le \P \big(\tZ_c(A') \ge
		1+\lfloor A-A' \rfloor \big)
	\le \f{\E \tZ_c(A')}
		{1+\lfloor \Delta+\Delta' \rfloor}
	\lesssim \f{e^{\Delta'}}{(\Delta+\Delta')}.
	\]
By taking \s{\Delta \asymp e^{\Delta'}/\ep} we can ensure that this probability is sufficiently small.
\end{proof}
\end{lemma}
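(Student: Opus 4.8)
The plan is to control this probability by a first-moment bound on the count $\tZ_c(A')$ of light, uniform paths introduced in \eqref{e:z.uniform}, exploiting the fact that a cycle which is non-uniform at a large scale $\log n + \Delta$ must in fact contain \emph{many} light-and-uniform subpaths at a smaller scale $A' \approx \log n$, and then applying Markov's inequality with this multiplicity in the denominator. First I would fix the parameters. Given $\ep,C$, set $\gamma \equiv 1$ and $A' \equiv \log n - \Delta'$, so that the relation \eqref{e:gamma} determines a threshold $c = 1/(e\lm_{A'})\exp\{-\gamma/(A')^3\}$. The expansion \eqref{e:delta.A-1} together with the monotonicity of $\lm_A$ (Remark~\ref{r:monotone}) shows that $c - c_\circ$ is, up to positive constants, of order $(\Delta' - \gamma)/(\log n)^3$, so taking $\Delta' \asymp C$ guarantees $c \ge c_\circ + C/(\log n)^3$; with this choice $ce\lm_{A'} = \exp\{-\gamma/(A')^3\} \in (0,1)$ and $n/e^{A'} = e^{\Delta'}$ is a constant depending only on $C$. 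Summing the estimate of Lemma~\ref{l:unif.paths} over all path lengths $k \ge 1$ gives a geometric series of ratio $ce\lm_{A'}$, so that
\[
	\E\tZ_c(A') \lesssim \f{n}{(A')^3 e^{A'}}\cdot\f{1}{1-e^{-\gamma/(A')^3}} \asymp \f{n}{\gamma\, e^{A'}} \asymp e^{\Delta'}.
\]

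Next I would carry out the structural step. Fix $A \equiv \log n + \Delta$, with $\Delta$ still to be chosen; for $n$ large $A \ge A' \ge 2$. Suppose a cycle $\CC$ satisfies $\mw(\CC) \le c_\circ + C/(\log n)^3 \le c$ but is not $A$-uniform. By Definition~\ref{d:unif} it has a subpath whose $\mw(\CC)$-excedance lies outside $[-A,A]$; since the total $\mw(\CC)$-excedance around $\CC$ vanishes, the cyclic complement of that arc is again a subpath, and one of the two has $\mw(\CC)$-excedance $< -A$. Because $\mw(\CC) \le c$, this subpath's $c$-excedance is no larger, hence also $< -A$, so Lemma~\ref{l:many.subpaths} (with our $A'$) extracts from it at least $1 + \lfloor A - A'\rfloor = 1 + \lfloor \Delta + \Delta'\rfloor$ distinct subpaths of $\CC$, each contributing to $\tZ_c(A')$. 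Thus the event in question is contained in $\{\tZ_c(A') \ge 1 + \lfloor \Delta + \Delta'\rfloor\}$, and Markov's inequality bounds its probability by $\E\tZ_c(A')/(1 + \lfloor \Delta + \Delta'\rfloor) \lesssim e^{\Delta'}/(\Delta + \Delta')$. Since $\Delta' \asymp C$ is fixed, choosing $\Delta \asymp e^{\Delta'}/\ep$ --- a constant depending only on $\ep$ and $C$ --- makes this at most $\ep$.

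The crucial point, and the only real subtlety, is the amplification supplied by Lemma~\ref{l:many.subpaths}: the crude bound $\P(\tZ_c(A') \ge 1) \le \E\tZ_c(A') \lesssim e^{\Delta'}$ is useless, since the right-hand side is a fixed positive constant rather than something small. What rescues the argument is precisely that non-uniformity at scale $\log n + \Delta$ forces $\asymp \Delta$ nested light-and-uniform subpaths at scale $A' \approx \log n$, so the gap $\Delta + \Delta'$ between the two scales enters the denominator of Markov's inequality and can be made arbitrarily large independently of the (fixed) numerator $e^{\Delta'}$. A minor bookkeeping care-point is the passage from ``$\mw(\CC)$-excedance outside $[-A,A]$'' to ``$c$-excedance $< -A$'', which uses the cyclic-complement symmetry together with $\mw(\CC) \le c$.
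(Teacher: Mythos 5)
Your proposal is correct and follows essentially the same route as the paper: choose $\gamma=1$, $A'=\log n-\Delta'$ with $\Delta'\asymp C$ so that $c\ge c_\circ+C/(\log n)^3$ and $\E\tZ_c(A')\lesssim e^{\Delta'}$, then use Lemma~\ref{l:many.subpaths} to convert non-$(\log n+\Delta)$-uniformity into $1+\lfloor\Delta+\Delta'\rfloor$ distinct contributions to $\tZ_c(A')$ and conclude by Markov's inequality with $\Delta\asymp e^{\Delta'}/\ep$. Your extra remark on passing from ``$\mw(\CC)$-excedance outside $[-A,A]$'' to a subpath of $c$-excedance $<-A$ via the cyclic complement and $\mw(\CC)\le c$ is a correct filling-in of a step the paper leaves implicit.
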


In the remainder of this section we argue that there are no cycles in the targeted weight regime that have length much longer than \s{(\log n)^3}. In view of Lemmas~\ref{p:too.unif}~and~\ref{c:too.non.unif}, it remains to consider cycles which are neither too uniform nor too non-uniform:

\begin{lemma}\label{l:light.and.too.long}
For all \s{\ep,C,\Delta>0} there exists \s{B=B(\ep,C,\Delta)>0} large enough that
	\[
	\P\left(
	\begin{array}{c}
	\exists\text{ cycle }\CC:
	c_\circ - C/(\log n)^3 \le
	\mw(\CC) \le c_\circ + C/(\log n)^3,\\
	\text{\s{\CC} is \s{(\log n+\Delta)}-uniform
		but not \s{(\log n-\Delta)}-uniform},\\
	\text{and }\len(\CC)\ge B(\log n)^3
	\end{array}
	\right)
	\le\ep.
	\]

\begin{proof}
We denote
	\[c^\pm \equiv  c_\circ\pm C/(\log n)^3,\quad\quad
	A^\pm \equiv \log n \pm \Delta\,.\]
Consider a \s{k}-cycle with weights \s{w_1,\ldots,w_k} summing to \s{k\mw/n}, where \s{c^-\le\mw\le c^+},
which is \s{A^+}-uniform but not \s{A^-}-uniform.
Let \s{X} denote the corresponding untilted bridge, with increments \s{(nw_i/\mw-1)}.
For convenience we shift \s{X} by a constant so that its minimum is zero.
Then its maximum is in \s{[A^-,A^+]}.

Let \s{H} denote the ceiling of the maximum of \s{X}. Write \s{H\equiv \lceil A_\circ\rceil+\Delta_H} (so \s{\Delta_H} is integer-valued). Let \s{A' \equiv \lceil A_\circ\rceil-\Delta' } be the largest integer for which \s{A' \le A^- -2} and
	\beq\label{e:condition.on.delta.prime}
	c^+ e \lm_A
	\le \exp\{ -1/A^3 \}
	\quad\quad\text{for all $A$ such that }
	(\log n)/2
	\le A \le A'+2.
	\eeq
The constraint \s{A'\leq A^- -2} ensures \s{\Delta'+\Delta_H\geq 1}. It follows from \eqref{e:lambda.A.asymptotic} that \s{\Delta' \asymp\max(C,\Delta)}. Define 
	\[
	\begin{array}{rl}
	x \hspace{-6pt}&\equiv H-(A'-2)
	=\Delta_H+\Delta'+2,\\
	y \hspace{-6pt}&\equiv A'-2
	= H-(\Delta_H+\Delta'+2).
	\end{array}
	\]
For fixed \s{C} and \s{\Delta} when \s{n} is big enough we have \s{0<x<y<H}.

For a sequence of times \s{0<t_1<\ldots<t_q\le k}
with \s{q} even, let \s{E[t_{1:q}]} denote the event that the \s{t_i} partition the bridge \s{X} into alternating up-crossings and down-crossings: that is,
	\[
	E[t_{1:q}]
	=
	\set{U[t_{i-1},t_i]\text{ for all \s{i} even}}
	\cap
	\set{D[t_{i-1},t_i]\text{ for all \s{i} odd}}
	\]
where \s{U[a,b]} and \s{D[a,b]} indicate the events of up-crossing and down-crossing on \s{[a,b]}:
	\[\begin{array}{rl}
	U[a,b]
	\hspace{-6pt}&\equiv
	\set{ X_a<x\text{ and }b=\min\set{t>a : X_t>y} }\\
	D[a,b]
	\hspace{-6pt}&\equiv\set{ X_a>y\text{ and }b=\min\set{t>a: X_t<x} }.
	\end{array}
	\]
(with cyclic time indexing).  The proof idea is as follows: any cycle with a very large number of crossings will give an excessive contribution to \s{\tZ_c(A'')} where \s{A'' \equiv y-x}, so we can assume a bounded number of crossings. We then argue that a long cycle with a bounded number of crossings is unlikely to occur because its range is effectively reduced from \s{A} to \s{A'}.

Each down-crossing corresponds to a subpath with \s{c}-excedance \s{<-A''}. By Lemma~\ref{l:many.subpaths},
from each such down-crossing we can extract a
contribution to \s{\tZ_{c^+}(A'')}. From Lemma~\ref{l:unif.paths} and \eqref{e:condition.on.delta.prime}, the expected number of cycles with at least \s{Q/2} down-crossings is
	\[
	\leq \f{\E \tZ_{c^+}(A'')}{Q/2}
	\lesssim \f{n}{Q e^{A''}}
		\sum_{\ell\ge1} \f{(c^+e\lm_{A''}
			)^\ell}{(A'')^3}
	\lesssim \f{n}{Q e^{A''}}
	\lesssim \f{e^{\Delta+2\Delta'}}{Q},
	\]
which can be made arbitrarily small by taking \s{Q} large.

It therefore remains to control the cycles with $q<Q$.
To this end, we shall first bound the probability for an exp-minus-one walk \s{Y} started at \s{Y_0=X_0} --- with no conditioning on the value of \s{Y_k} --- to belong to the event \s{E[t_{1:q}]}.
\begin{enumerate}[1.]
\item First consider the probability of a length-\s{t} down-crossing, \s{D[a,a+t]}, conditioned on having just completed an up-crossing at time \s{a}. This means that \s{\lceil Y_a \rceil = y+h} for some integer \s{1\le h\le H-y}. Conditional on \s{h}, the probability to make a down-crossing is upper bounded by the probability that an exp-minus-one walk started exactly at \s{y+h} will travel to \s{[x-1,x+1]} in \s{t} steps without exiting the interval \s{[x-1,H+1]}. It follows that the probability of down-crossing \s{D[a,a+t]}, conditioned on having just completed an up-crossing at time \s{a}, is upper bounded by \s{p_\text{dn}(t)} where
	\[
	p_\text{dn}(t)
	= \max_{1\le h\le H-y}
	\int_0^2
	p^t_{A'}(y+h-(x-1),u) \,du.
	\]
Recalling \s{H-y = \Delta_H+\Delta'+2} and
applying \eqref{e:all.times.large.jump} gives
	\[p_\text{dn}(t)
	\lesssim \f{(\lm_{A'})^t(\Delta_H+\Delta')}{(A')^3}.
	\]
\item Similarly, consider the probability of a length-\s{t} up-crossing \s{U[a,a+t]}, conditioned on having just completed a down-crossing at time \s{a}. This is upper bounded by the probability \s{p_\text{up}(t)} that an exp-minus-one walk started exactly at the integer \s{x} will travel to \s{[y,H+1]} in \s{t} steps, and remain in the interval \s{[0,y+1]\subset[0,A']} up to time \s{t-1}.
Conditioning on the position \s{y+1-u} of the walk at time \s{t-1} and the position \s{z} at time \s{t} gives
	\[p_\text{up}(t)
	\le \int_y^{H+1}
		\int_0^{y+1}
			\f{p^{t-1}_{A'}(x,y+1-u)}
				{ e^{(z-y)+u} }
			\,du
			\, dz
	\lesssim 
		\int_0^{y+1}
			\f{p^{t-1}_{A'}(x,y+1-u)}{e^u}
			\,du.\]
If \s{t\gtrsim (\log n)^2}, then \eqref{e:all.times.kernel.ubd} gives
	\[
	p_\text{up}(t)
	\lesssim
	\f{(\lm_{A'})^t x}{(A')^3}
	\int_0^{y+1} \f{u+1}{e^u} \,du
	\lesssim
	\f{(\lm_{A'})^t (\Delta_H+\Delta')}{(A')^3}
	\]
If \s{t\lesssim (\log n)^2},
the contribution from \s{u\le y/2} satisfies the same bound by applying \eqref{e:all.times.large.jump} in place of \eqref{e:all.times.kernel.ubd},
while the contribution from \s{u\ge y/2} is negligible in comparison.
\end{enumerate}
\bigskip
Conditioned on all these crossings, the probability that \s{|Y_k-Y_0|\le\ep} is \s{\lesssim\ep},
whereas unconditionally
we have \s{|Y_k|\le\ep} with probability \s{\asymp \ep/k^{1/2}}.
Altogether we find
	\[\P(X\in E[t_{1:q}])
	= \lim_{\ep\downarrow0}
		\f{\P(Y\in E[t_{1:q}] ; |Y_k|\le\ep)}
			{\P(|Y_k|\le\ep)}
	\lesssim k^{1/2} \f{(\lm_{A'})^k (\Delta_H+\Delta')^{q} e^{O(q)}}{(A')^{3q}}.\]
Therefore, the probability to have
 a cycle \s{\CC} with \s{|\mw(\CC)-c_\circ|\le C/(\log n)^3}, which is \s{A^+}-uniformly light
but not \s{A^-}-uniformly light,
has length \s{\len(\CC)\ge L=B(\log n)^3}, and \s{q\le Q} is, using Markov's inequality
and Lemma~\ref{l:basic.first.moment},
	\[\lesssim
	\sum_{k\ge L} \E Z^k_{c^+}
	\sum_{q\le Q}
	\sum_{0<t_1<\ldots<t_q\le k}
	\P(X\in E[t_{1:q}])
	\lesssim
		 \sum_{k\ge L}
		\f{(c^+ e \lm_{A'})^k}{k^{3/2}}
	\sum_{q\le Q}
	\f{k^{1/2} [(\Delta_H+\Delta')k]^q  e^{O(q)}}{(A')^{3q}}\,.\]
We can assume \s{B\gtrsim 1/(\Delta_H+\Delta')} to make the summand increasing in $q$.
Recalling \eqref{e:condition.on.delta.prime}, this in turn is
	\[\lesssim
		\f{(A')^3  (\Delta_H+\Delta')^Q  e^{O(Q)} }{B(\log n)^3}
	\sum_{k\ge L}
		\f{Q [k/(A')^3]^Q}{\exp\{ k/(A')^3 \}}
		\f{1}{(A')^3}
	\lesssim
		\f{Q  (\Delta_H+\Delta')^Q e^{O(Q)}}{B}
		\int_{B}^\infty
		\f{z^Q}{e^z} \,dz.\]
This can be made arbitrarily small by taking \s{B} large, and the result follows.
\end{proof}
\end{lemma}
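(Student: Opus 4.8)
The plan is to control any candidate cycle $\CC$ through its untilted bridge $X$, the process with increments $(nw_i/\mw(\CC)-1)$, which by \eqref{e:scale} becomes an exp-minus-one $k$-bridge once we condition on $\mw(\CC)$; we shift $X$ so that $\min X=0$, so the hypothesis that $\CC$ is $(\log n+\Delta)$-uniform but not $(\log n-\Delta)$-uniform forces $\max X\in[A^-,A^+]$ with $A^\pm\equiv\log n\pm\Delta$. Writing $H$ for the ceiling of $\max X$, the argument rests on a dichotomy: either $X$ makes many up-and-down excursions between a low level $x$ and a high level $y$ — each down-excursion being a very light subpath, so that many of them are ruled out by the first-moment bound on uniform paths — or $X$ makes only boundedly many such excursions, in which case over most of its length it is confined to an interval of width $A'$ strictly smaller than $A^-$; since the associated survival cost is then $(\lm_{A'})^k$ rather than $(\lm_{A^+})^k$, and $\lm_A$ is increasing in $A$, compounding this small reduction over a length $k\gtrsim B(\log n)^3$ produces an exponential penalty that overwhelms the first-moment count of light cycles once $B$ is large.

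To set up the dichotomy I would fix $A'$ to be the largest integer with $A'\le A^--2$ and $c^+e\lm_A\le\exp\{-1/A^3\}$ for every $A$ with $(\log n)/2\le A\le A'+2$, where $c^+\equiv c_\circ+C/(\log n)^3$; by \eqref{e:lambda.A.asymptotic} this forces only $\Delta'\equiv\lceil\log n\rceil-A'\asymp\max(C,\Delta)$. I then place the levels at $x\equiv H-(A'-2)$ and $y\equiv A'-2$, so that for $n$ large $0<x<y<H$, the interval $[x-1,H+1]$ has width exactly $A'$, and $[0,y+1]$ has width $<A'$; write $A''\equiv y-x$. For the many-excursion branch: every down-crossing of $X$ from near $y$ to near $x$ is a subpath with $c^+$-excedance $<-A''$, so Lemma~\ref{l:many.subpaths} extracts from it a subpath contributing to $\tZ_{c^+}(A'')$; since Lemma~\ref{l:unif.paths} together with the defining inequality for $A'$ gives $\E\tZ_{c^+}(A'')\lesssim n/e^{A''}\asymp e^{O(\max(C,\Delta))}$, Markov's inequality bounds the expected number of bad cycles with at least $Q/2$ down-crossings by $\lesssim e^{O(\max(C,\Delta))}/Q$, which is below $\ep$ once $Q=Q(\ep,C,\Delta)$ is chosen large.

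It remains to rule out bad cycles with fewer than $Q$ crossings. For a fixed increasing sequence $0<t_1<\dots<t_q\le k$ ($q<Q$) encoding an alternating pattern of up- and down-crossings between the levels $x$ and $y$, I would bound the probability that a free exp-minus-one walk $Y$ started from $X_0$ realizes this pattern. The key point is that a down-crossing of length $t$, conditioned on having just finished an up-crossing, must travel from a point just above $y$ down to a neighborhood of $x$ without leaving $[x-1,H+1]$, an interval of width $A'$; since the walk runs between opposite ends of that interval, \eqref{e:all.times.large.jump} bounds the probability by $\lesssim(\lm_{A'})^t\,(H-y)/(A')^3$. An up-crossing of length $t$ similarly stays in $[0,y+1]\subset[0,A']$ up to time $t-1$ and ends near the top of that interval, so \eqref{e:all.times.kernel.ubd} (with \eqref{e:all.times.large.jump} used for the short crossings $t\lesssim(\log n)^2$) gives the same bound. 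Multiplying over the $q$ crossings and dividing by the unconditional return probability $\asymp\ep/k^{1/2}$ to pass from a walk to a bridge yields
\[
\P\bigl(X\in E[t_{1:q}]\bigr)\lesssim k^{1/2}\,\frac{(\lm_{A'})^k\,(H-y)^q\,e^{O(q)}}{(A')^{3q}}.
\]
Finally, Lemma~\ref{l:basic.first.moment} gives $\E Z^k_{c^+}\lesssim(c^+e)^k/k^{3/2}$, there are $\lesssim k^q$ choices of $t_{1:q}$, and summing over $q\le Q$ (with $B$ large enough that the summand increases in $q$) and over $k\ge L\equiv B(\log n)^3$, while using $c^+e\lm_{A'}\le\exp\{-1/(A')^3\}$ and the change of variables $z=k/(A')^3\asymp k/(\log n)^3$, bounds the expected number of bad cycles of length $\ge L$ by $\lesssim B^{-1}\,[\max(C,\Delta)]^{O(Q)}\,e^{O(Q)}\int_B^\infty z^Q e^{-z}\,dz$, which tends to $0$ as $B\to\infty$ for fixed $Q$. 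Combining the two branches via a union bound proves the lemma.

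The main obstacle is this middle step — making the per-crossing estimate carry the \emph{reduced} range $A'$ rather than the ambient range $A^+$. That requires choosing the levels $x,y$ precisely so that each down-crossing is trapped in a window of width exactly $A'$ (the interval $[x-1,H+1]$) and each up-crossing in a window of width $<A'$ (the interval $[0,y+1]$), and then matching each kind of crossing to exactly the right range-restricted kernel bound from Section~\ref{s:rw}: the large-jump estimate \eqref{e:all.times.large.jump} for the down-crossings, since they run between opposite boundaries of their window, and \eqref{e:all.times.kernel.ubd} for the up-crossings, with a separate cruder treatment for short crossings where $(\lm_{A'})^t\asymp 1$. A secondary delicacy is that the gap between $A'$ and $A^+$ must be made $\asymp\max(C,\Delta)$, not $O(1)$, so that $c^+e\lm_{A'}$ stays below $1$ by a margin $\asymp 1/(\log n)^3$ which, compounded over $k\gtrsim B(\log n)^3$ steps, still yields a genuine exponential gain; this is the purpose of the condition defining $A'$, and it is why $B$ must be taken large relative to $C$, $\Delta$ and the crossing cap $Q$.
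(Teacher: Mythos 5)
Your proposal is correct and follows essentially the same route as the paper's proof: the same choice of $A'$ and levels $x,y$, the same dichotomy between many crossings (handled via Lemma~\ref{l:many.subpaths}, Lemma~\ref{l:unif.paths}, and Markov) and few crossings (per-crossing kernel bounds \eqref{e:all.times.large.jump}/\eqref{e:all.times.kernel.ubd} carrying the reduced range $A'$), and the same final summation with the change of variables $z=k/(A')^3$. The only difference is that the paper spells out the up-crossing estimate in more detail (conditioning on the positions at times $t-1$ and $t$ to control the exponential overshoot into $[y,H+1]$), which you summarize but do not carry out.
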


\begin{ppn}\label{p:light.and.too.long}
For all \s{\ep,C>0} there exists \s{B=B(\ep,C)>0} large enough that
	\[
	\P\left(
	\begin{array}{c}
	\exists\text{ cycle }\CC:
	c_\circ - C/(\log n)^3 \le
	\mw(\CC) \le c_\circ + C/(\log n)^3\\
	\text{and }\len(\CC)\ge B(\log n)^3
	\end{array}
	\right)
	\le\ep.
	\]

\begin{proof}
Follows by combining
Lemmas~\ref{p:too.unif},~\ref{c:too.non.unif},~and~\ref{l:light.and.too.long}.
\end{proof}
\end{ppn}

\begin{proof}[Proof of Theorem~\ref{t:first}]
Proposition~\ref{p:too.light} rules out light supercritical cycles of length \s{\geq (\log n)^2}, and
Lemma~\ref{l:log.squared} rules out the ones shorter than \s{(\log n)^2}, which implies \eqref{e:t.first.too.light}.
Proposition~\ref{p:light.and.too.long} rules out cycles of length \s{\gg(\log n)^3} in the regime \s{|\mw-c_\circ|\lesssim 1/(\log n)^3}, which together with \eqref{e:t.first.too.light} implies \eqref{e:t.first.light.and.too.long}.
\end{proof}

\section{Variance of typical-profile cycles}
\label{s:second.moment}

In this section we complete the proof of our main result Theorem~\ref{t:main}, the key ingredient of which is to demonstrate that cycles of length \s{\asymp (\log n)^3} exist in the regime \s{|c - c_\circ| \lesssim 1/(\log n)^3}.  (We also argue that it is unlikely that much shorter cycles exist in this weight range.)  We prove the existence of these cycles by a second-moment computation on a set of ``good'' cycles, as we now formally define.

\subsection{Good cycles} Recall \s{A_\circ\equiv\log n}, and let \s{A\equiv \lceil A_\circ\rceil-\Delta} for \s{\Delta} a large positive integer, with \s{\Delta\le (\log n)/2}. Choose \s{c\equiv c_A} such that \s{ce\lm_A=1}.

\begin{dfn}\label{d:good}
A \s{k}-cycle \s{\CC}, with weights \s{w_1,\ldots,w_k} summing to \s{k\mw/n}, is termed \s{\Delta}-\emph{good} if
\begin{enumerate}[(i)]
\item \s{c(1-1/k) \le\mw\le c}
	for \s{c=c_A}, \s{A= \lceil A_\circ\rceil-\Delta};
\item the process \s{X} with increments
\s{(nw_i/c-1)} has range \s{\le A-2},
\item the recentered process \s{\widetilde{X}}
(as in \eqref{e:X.recentered.around.half.A})
has \s{\le \Delta \times \delta_A(x)^4}
visits to \s{(x-1,x]},
for each \s{1\le x\le A}.
\end{enumerate}
\end{dfn}
Let \s{\bm{\Omega}_\Delta(k)} be the number of \s{\Delta}-good \s{k}-cycles \s{\CC}, and let
\[
{\bm\Omega}_\Delta \equiv \sum_{k=A^3/\Delta}^{A^3\Delta} {\bm\Omega}_\Delta(k)\,.
\]
We show by the moment method that for large~\s{\Delta}, \s{\bm{\Omega}_\Delta} is positive with large probability.
To this end we first argue that \s{\bm{\Omega}_\Delta} is large in expectation:

\begin{lemma}
\label{l:large.moment}
For \s{\Delta} large,
\s{\E\bm{\Omega}_\Delta \asymp\Delta}.

\begin{proof}
For \s{A^2\lesssim k \lesssim n^{1/2}},
consider a \s{k}-cycle \s{\CC}
 that satisfies properties (i) and (ii). The conditional probability that (iii) \emph{fails} is, by taking a union bound over \s{1\le x\le A} and applying Lemma~\ref{l:expected.profile} with Markov's inequality,
	\[\lesssim \sum_{x=1}^A
	\f{ 1}
		{\Delta\de_A(x)^2}
	\lesssim 1/\Delta.\]
It follows,
assuming \s{\Delta} large enough, that the expectation of \s{\bm{\Omega}_\Delta(k)}
is equal up to constants to the expected number of \s{k}-cycles satisfying properties (i) and (ii) only. By Lemma~\ref{l:unif.cycles},
	\[
	\E \bm{\Omega}_\Delta(k)
	\asymp
	\E[Z^k_c(A)-Z^k_{c(1-1/k)}(A)]
	\asymp
	\f{(ce\lm_A)^k}{A^3} =\f{1}{A^3}.
	\]
Summing over \s{\Delta^{-1}\le k/A^3 \le\Delta} proves the claim.
\end{proof}
\end{lemma}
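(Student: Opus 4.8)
The plan is to estimate \s{\E\bm{\Omega}_\Delta(k)} for each \s{k} in the window \s{A^3/\Delta\le k\le A^3\Delta}, show it is \s{\asymp A^{-3}} uniformly there, and sum. First I would record that the parameters sit in the right range: since \s{\Delta\le(\log n)/2} we have \s{A=\lceil\log n\rceil-\Delta\asymp\log n} with \s{A\ge\Delta}, so every \s{k} in the window obeys \s{A^2\le A^3/\Delta\le k\le A^3\Delta\lesssim(\log n)^4=o(\sqrt n)}; in particular \s{k\gtrsim A^2}, the standing hypothesis of Lemmas~\ref{l:bridge},~\ref{l:expected.profile},~and~\ref{l:unif.cycles}, and \s{(n)_k/n^k\asymp1}.

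The heart of the argument is to show that imposing condition~(iii) of Definition~\ref{d:good} on top of (i) and (ii) costs only a constant factor once \s{\Delta} is large. To this end I would fix a \s{k}-cycle, condition on the event that it satisfies (i) and (ii), and invoke the scaling identity~\eqref{e:scale} with \s{\bar s=\mw}: the \s{\mw}-normalized bridge is then an exp-minus-one \s{k}-bridge, and since \s{\mw\in[c(1-1/k),c]} the recentered process \s{\widetilde{X}} of Definition~\ref{d:good}(iii) differs from the recentering of this range-restricted \s{k}-bridge by a deterministic shift of size at most \s{1}. Lemma~\ref{l:expected.profile}, applied with the single interval \s{S=(x-1,x]} so that \s{|S|=1}, then bounds the conditional expected number of visits of \s{\widetilde{X}} to \s{(x-1,x]} by \s{\lesssim\de_A(x)^2}, so by Markov's inequality this visit count exceeds \s{\Delta\,\de_A(x)^4} with conditional probability \s{\lesssim(\Delta\,\de_A(x)^2)^{-1}}; a union bound over \s{1\le x\le A} gives
	\[
	\P\big(\text{(iii) fails}\mid\text{(i),(ii)}\big)\lesssim\sum_{x=1}^A\f{1}{\Delta\,\de_A(x)^2}\lesssim\f1\Delta\sum_{j\ge1}\f{1}{j^2}\lesssim\f1\Delta,
	\]
using \s{\de_A(x)\asymp(x+1)\wedge(A-x+1)}. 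For \s{\Delta} large this conditional failure probability is at most \s{1/2}, so \s{\E\bm{\Omega}_\Delta(k)} agrees up to an absolute constant with the expected number of \s{k}-cycles satisfying (i) and (ii) alone.

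That last expectation I would evaluate directly with Lemma~\ref{l:unif.cycles}. With \s{c=c_A} chosen so that \s{ce\lm_A=1}, the cycles counted are exactly those with \s{c(1-1/k)\le\mw\le c} whose untilted bridge is range-restricted, so the count is \s{\asymp\E[Z^k_c(A)-Z^k_{c(1-1/k)}(A)]}, which by Lemma~\ref{l:unif.cycles} is
	\[
	\asymp\f{(n)_k}{n^k}\cdot\f{(ce\lm_A)^k\,[1-(1-1/k)^k]}{A^3}\asymp\f{1}{A^3},
	\]
using \s{(n)_k/n^k\asymp1}, \s{1-(1-1/k)^k\asymp1}, and, crucially, \s{(ce\lm_A)^k=1^k=1}. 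Hence \s{\E\bm{\Omega}_\Delta(k)\asymp A^{-3}} uniformly over the window, and summing over the \s{\asymp\Delta A^3} integers \s{k} with \s{A^3/\Delta\le k\le A^3\Delta} yields \s{\E\bm{\Omega}_\Delta\asymp\Delta A^3\cdot A^{-3}=\Delta}.

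The step I expect to demand the most care is the middle one: extracting from conditions (i)--(ii) that \s{\widetilde{X}} is a bounded shift of a genuinely range-restricted exp-minus-one \s{k}-bridge, so that Lemma~\ref{l:expected.profile} applies level by level to produce the \s{\de_A(x)^2} local-time bound, and checking that the fourth-power threshold \s{\Delta\,\de_A(x)^4} in Definition~\ref{d:good}(iii) is exactly calibrated so that the resulting union bound \s{\sum_x(\Delta\,\de_A(x)^2)^{-1}} both converges and is \s{o(1)} for large \s{\Delta}. Once that is in place, the remaining two steps are routine applications of the cycle-count and bridge-local-time estimates already established.
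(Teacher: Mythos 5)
Your proposal is correct and follows essentially the same route as the paper's proof: bound the conditional probability that condition (iii) fails by $\lesssim 1/\Delta$ via Lemma~\ref{l:expected.profile}, Markov's inequality, and a union bound over the levels $1\le x\le A$; then evaluate the count of cycles satisfying (i)--(ii) by Lemma~\ref{l:unif.cycles} using $ce\lm_A=1$ to get $\asymp A^{-3}$ per length $k$, and sum over the window $A^3/\Delta\le k\le A^3\Delta$. Your extra remarks on the parameter ranges and on the bounded discrepancy between the $c$- and $\mw$-normalized processes are sensible elaborations of steps the paper leaves implicit, not a different argument.
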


\subsection{Variance bound}

The crux of the proof is in the following lemma which argues that the count of good cycles has low variance.

\begin{lemma}\label{l:conditional}
For any cycle \s{\CC},
	\[
	\sum_{\CC'\ne\CC}
	\P(\text{\s{\CC'} is \s{\Delta}-good} \,|\,
		\text{\s{\CC} is \s{\Delta}-good})
	\le \E\bm{\Omega}_\Delta+ o_\Delta(1)
	\]
where \s{o_\Delta(1)}
indicates an error tending to zero in the limit
of \s{\Delta\to\infty}.

\begin{proof}
Fix \s{\Delta} large, and fix a \s{k}-cycle \s{\CC}.
 Condition on the event that \s{\CC} is good
(from here on, ``good'' means \s{\Delta}-good), and write
	\[
	\bm{\Sigma}(\CC)
	\equiv
	\sum_{\CC'\ne\CC}
	\P(\text{\s{\CC'} is good} \,|\,
		\text{\s{\CC} is good}).
	\]
If a cycle \s{\CC'} is disjoint from \s{\CC} then the events \s{\set{\text{\s{\CC} is good}}} and \s{\set{\text{\s{\CC'} is good}}} are independent: thus, the contribution
to \s{\bm{\Sigma}(\CC)}
 from all \s{\CC'\cap\CC=\emptyset} is at most \s{\E\bm{\Omega}_\Delta}. It remains then to argue that the contribution from cycles intersecting with \s{\CC} is \s{o_\Delta(1)}.

To this end, consider \s{\CC'\ne\CC}
such that \s{\CC \cap \CC'} consists of \s{q} shared segments, with \s{q\ge1}. Let \s{r} count the vertices in \s{\CC'\setminus\CC}. The cycle \s{\CC'} is partitioned into alternating \emph{in-segments} (contained in \s{\CC}) and \emph{out-segments} (edge-disjoint from \s{\CC}); the combined length of all the out-segments is \s{r+q}.
Label the in-segments \s{[u_j,v_j]}, \s{1\le j\le  q}, in order of their traversal by \s{\CC'} --- \s{\CC} may traverse the segments in a different order.
Let \s{\mathbf{n}_\text{out}(r,q)}
count the number of
such cycles \s{\CC'} for a given tuple \s{(\CC,r,q,(u_j,v_j)_{1\le j\le q})},
	\[
	\mathbf{n}_\text{out}(r,q)
	\le n^r \binom{r+q-1}{q-1}
	\le
	n^r (r+q)^{q-1}\]
(since \s{\CC'} is given by choosing
an ordered sequence of \s{r} vertices,
then dividing them in to \s{q} groups).

Let \s{X'} be the process with increments \s{(nw'_i/c-1)}, where \s{w'_i} are the edge weights on \s{\CC'}. The edge weights in \s{\CC'\setminus\CC} are still i.i.d.\ unit-rate exponential random variables after the conditioning on \s{\CC}. For \s{1\le j\le q} let integers \s{y_j,z_j} be defined by
	\[
	\begin{array}{rll}
	y_j\hspace{-6pt}&=\lceil X'_{v_j} \rceil
	-\lceil X'_{u_j}\rceil
	&\text{(discretized increment over \s{j}-th in-segment);}\\
	z_j\hspace{-6pt}&=\lceil X'_{u_{j+1}}\rceil
	-\lceil X'_{v_j}\rceil
	&\text{(discretized increment over \s{j}-th out-segment).}
	\end{array}
	\]
Write \s{\bm{y}\equiv(y_1,\ldots,y_q)}
and likewise \s{\bm{z}\equiv(z_1,\ldots,z_q)}.
Let \s{\overline{y}} denote the average of the \s{y_j},
and likewise \s{\overline{z}}
the average of the \s{z_j}. For \s{\CC'} to satisfy property (i), we must have \s{q(\overline{y}+\overline{z})\in\set{-1,0}}.
For each \s{j}, 
\s{y_j+A\ge1} and
\s{-z_j+A\ge1}. Thus,
given \s{\overline{y}},
the number of choices for compatible \s{(\bm{y},\bm{z})} is at most
	\[\mathbf{a}_q(\overline{y})
	\equiv \binom{q(A+\overline{y})-1}{q-1}
	\sum_{i=0}^1
		\binom{q(A+\overline{y})+i-1}{q-1}
	\le [e^{O(1)}(A+\overline{y})]^{2q}.\]
Next, by property (iii), the number of choices of \s{(u_j,v_j)} consistent with \s{\bm{y}} is at most
	\[\prod_{j=1}^q\Big\{
	 \Delta^2
	\sum_x \delta_A(x)^4 \delta_A(x+|y_j|)^4\Big\}
	\le
	\Delta^{2q}
	\prod_{j=1}^q\Big\{
	(A-|y_j|) 
	\max_x
	\big[
	 (x+1)^4 (A-|y_j|-x+1)^4
	 \big]\Big\},\]
where we have used that there are \s{\le A-|y_j|}
choices of \s{x} for which the summand is positive.
For each \s{j} the maximum is achieved with \s{x+1=(A-|y_j|)/2}, and combining with Jensen's inequality the above is
	\[\le
	\Delta^{2q}\prod_{j=1}^q (A-|y_j|)^9
	\le\Delta^{2q} \Big(
		A- q^{-1}\sum_j |y_j|
		\Big)^{9q}
	\le \Delta^{2q}(A+\overline{y})^{9q}
	\equiv \mathbf{b}_q(\overline{y}).\]
Lastly, let \s{\mathbf{p}(\bm{z})} be the probability that \s{X'} has increments
as specified by \s{\bm{z}},
with range at most \s{A} (otherwise \s{\CC'} would violate property (ii)). Writing \s{\ell_j} for the length of the \s{j}-th out-segment, a similar calculation as in the proof of Lemma~\ref{l:unif.paths} gives
	\[
	\mathbf{p}(\bm{z})
	\le 
	\prod_{j=1}^q\bigg\{
	O(c/n)
	\f{f_{\ell_j}[ (\ell_j+z_j)c/n]}
	{f_{\ell_j}(\ell+z_j)}
	\max_x p_A^{\ell_j}(x,x+z_j)
	\bigg\}.\]
Applying \eqref{e:all.times.large.jump}
and making some straightforward manipulations
(recalling \s{ce\lm_A=1}), we arrive at the conclusion that for any \s{\bm{z}} consistent with \s{\bm{y}},
	\[\mathbf{p}(\bm{z})\le
	\Big(\f{ce\lm_A}{n}\Big)^{r}
	\Big[
	\f{e^{O(1)}
	ce\lm_A}{ n e^{-\overline{z}} }
	\f{(A-\overline{z})^2}{A^3}
	\Big]^q \le e^{O(q)}
	\mathbf{c}_{r,q}(\overline{y}),
	\quad
	\mathbf{c}_{r,q}(\overline{y})\equiv
	\f{1}{n^r}
	\Big[
	\f{(A+\overline{y})^2}
	{ n e^{\overline{y}} A^3}
	\Big]^q.\]
Combining these estimates gives
	\[
	\bm{\Sigma}(\CC)-\E\bm{\Omega}_\Delta
	\le e^{O(q)} \sum_{q,r,\overline{y}}
	\mathbf{n}_\textup{out}(r,q)
	\mathbf{a}_q(\overline{y})
	\mathbf{b}_q(\overline{y})
	\mathbf{c}_{r,q}(\overline{y})
	\le \sum_{q,r,\overline{y}}
	\Big[
	\f{e^{O(1)} \Delta^2
	(A+\overline{y})^{13}
	}{ ne^{\overline{y}}  }
	\Big]^{q}
	\f{(r+q)^{q-1}}{A^{3q}}
	\]
where the sum is taken over \s{q\ge1}, \s{q\le r+q \le \Delta A^3}, and \s{|\overline{y}|\le A-2}
with \s{q\overline{y}} integer-valued. Making the change of variables \s{Y \equiv A+\overline{y}\ge2}, we find
	\[
	\bm{\Sigma}(\CC)-\E\bm{\Omega}_\Delta
	\le
	\sum_{q, Y}
	\Big[
	\f{ \Delta^3
	}{ n/e^A }
	\f{Y^{13}}{e^Y}
	\Big]^q
	\le
	\sum_{q, Y}
	\Big[
	\f{ \Delta^3
	}{ e^\Delta }
	\f{Y^{13}}{e^Y}
	\Big]^q
	\lesssim \f{ \Delta^3
	}{ e^\Delta },
	\]
which tends to zero in the limit \s{\Delta\to\infty} as claimed.
\end{proof}
\end{lemma}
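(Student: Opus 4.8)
The plan is a second-moment argument: writing $\bm{\Sigma}(\CC)\equiv\sum_{\CC'\ne\CC}\P(\CC'\text{ is $\Delta$-good}\mid\CC\text{ is $\Delta$-good})$, I want to show $\bm{\Sigma}(\CC)\le\E\bm{\Omega}_\Delta+o_\Delta(1)$. The free part is cheap: if $\CC'$ is edge-disjoint from $\CC$, then $\{\CC'\text{ is good}\}$ and $\{\CC\text{ is good}\}$ are measurable with respect to disjoint families of independent edge weights, hence independent, so the contribution from all edge-disjoint $\CC'$ is at most $\sum_{\CC'}\P(\CC'\text{ is good})=\E\bm{\Omega}_\Delta$. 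Everything then reduces to proving that cycles $\CC'$ sharing at least one edge with $\CC$ contribute only $o_\Delta(1)$.

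To handle these I would classify an intersecting $\CC'$ by the combinatorial shape of its overlap with $\CC$: it is an alternating cyclic concatenation of $q\ge1$ \emph{in-segments} $[u_j,v_j]$ (maximal runs of edges on $\CC$, labelled in the order $\CC'$ traverses them) and $q$ \emph{out-segments} (edge-disjoint from $\CC$), and we let $r$ be the number of vertices of $\CC'$ lying outside $\CC$, so the out-segments have total length $r+q$. For fixed $\CC$, $q$, $r$ and fixed in-segments $(u_j,v_j)$, the number of compatible $\CC'$ is at most $n^r\binom{r+q-1}{q-1}\le n^r(r+q)^{q-1}$, obtained by choosing an ordered list of $r$ outside vertices and splitting it among the $q$ out-segments. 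The crux is then a per-configuration bound on $\P(\CC'\text{ is good}\mid\CC\text{ is good})$, which one sums over $q,r$ and over the discretized increment profile.

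For the per-configuration estimate I would run the walk $X'$ with increments $(nw'_i/c-1)$ along $\CC'$ and record the rounded net displacements $y_j$ over the $j$-th in-segment and $z_j$ over the $j$-th out-segment, with averages $\overline y,\overline z$. Four constraints enter. (a) Property~(i) for $\CC'$ forces $q(\overline y+\overline z)\in\{-1,0\}$. (b) Property~(ii) forces $X'$ to have range $\le A$, bounding the number of $(\bm y,\bm z)$ consistent with a given $\overline y$ by $[e^{O(1)}(A+\overline y)]^{2q}$. (c) Property~(iii) \emph{of the conditioning cycle $\CC$} --- its recentered walk (cf.\ \eqref{e:X.recentered.around.half.A}) visits each unit level $x$ at most $\Delta\,\delta_A(x)^4$ times --- limits the number of placements of the $(u_j,v_j)$ consistent with $\bm y$ to $\prod_{j=1}^q\big(\Delta^2\sum_x\delta_A(x)^4\,\delta_A(x+|y_j|)^4\big)$; maximizing over the level (peak near $x+1\approx(A-|y_j|)/2$) and applying Jensen gives $\le\Delta^{2q}(A+\overline y)^{9q}$. (d) The out-segment weights remain i.i.d.\ unit exponentials after conditioning on $\CC$, so the probability $X'$ realizes the prescribed $\bm z$ with range $\le A$ is controlled exactly as in Lemma~\ref{l:unif.paths} via the large-jump kernel bound \eqref{e:all.times.large.jump}; since $ce\lm_A=1$ this is $\le n^{-r}\big[e^{O(1)}(A+\overline y)^2/(n\,e^{\overline y}A^3)\big]^q$.

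Multiplying the four factors and summing over $q\ge1$, over $r$ with $q\le r+q\le\Delta A^3$, and over $\overline y$ with $q\overline y$ integral, the $n^r$ from the vertex count cancels the $n^{-r}$ from (d), and substituting $Y\equiv A+\overline y\ge2$ collapses the summand to $\big[(\Delta^3/(n/e^A))\,Y^{13}e^{-Y}\big]^q$. Since $A=\lceil\log n\rceil-\Delta$ we have $n/e^A\asymp e^\Delta$, so this is $\big[\Delta^3e^{-\Delta}\,Y^{13}e^{-Y}\big]^q$; the convergence of $\sum_Y Y^{13}e^{-Y}$ and of the geometric series in $q$ then yields $\bm{\Sigma}(\CC)-\E\bm{\Omega}_\Delta\lesssim\Delta^3e^{-\Delta}=o_\Delta(1)$. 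I expect the main obstacle to be the bookkeeping in the per-configuration bound: the $\Delta^{2q}$ introduced by property~(iii) of $\CC$ (with the exponent $4$ in the $\delta_A(x)^4$ budget chosen precisely so the level-optimization produces only $(A-|y_j|)^9$), together with all the stray polynomial powers of $A$, must be absorbed --- partly into the $A^3$'s coming from the kernel bounds and ultimately into the single exponentially large gain $n/e^A\asymp e^\Delta$ per out-segment; getting every power to balance is the delicate step, whereas the probabilistic inputs are all in hand from Section~\ref{s:rw}.
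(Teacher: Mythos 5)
Your proposal is correct and follows essentially the same route as the paper's proof: the same in-segment/out-segment decomposition with the count $n^r\binom{r+q-1}{q-1}$, the same discretized increments $(\bm y,\bm z)$ with the factors $\mathbf a_q,\mathbf b_q,\mathbf c_{r,q}$ (using property (iii) of the conditioning cycle and the large-jump kernel bound as in Lemma~\ref{l:unif.paths}), and the same final summation via $Y=A+\overline y$ giving $\lesssim\Delta^3e^{-\Delta}$. No substantive differences to report.
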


\begin{cor}\label{c:whp}
In the limit \s{\Delta\to\infty},
\s{\P(\bm{\Omega}_\Delta=0)\to0}.
\begin{proof}
By Chebychev's inequality and Lemma~\ref{l:conditional},
	\[
	\P(\bm{\Omega}_\Delta=0)
	\le\f{\text{Var}\,|\bm{\Omega}_\Delta|}{(\E|\bm{\Omega}_\Delta|)^2}
	\le \f{\E|\bm{\Omega}_\Delta| + o_\Delta(1)}{(\E|\bm{\Omega}_\Delta|)^2}.
	\]
By Lem.~\ref{l:large.moment} the right-hand side tends to zero in the limit \s{\Delta\to\infty}.
\end{proof}
\end{cor}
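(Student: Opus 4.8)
The plan is a second moment (Chebyshev) argument applied to $\bm{\Omega}_\Delta$, with $\Delta\to\infty$ playing the role of the large parameter. The first moment is already in hand: Lemma~\ref{l:large.moment} gives $\E\bm{\Omega}_\Delta\asymp\Delta$, so $\E\bm{\Omega}_\Delta\to\infty$, and it remains only to show that $\text{Var}(\bm{\Omega}_\Delta)$ has smaller order than $(\E\bm{\Omega}_\Delta)^2\asymp\Delta^2$.

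To control the variance I would write $\bm{\Omega}_\Delta=\sum_{\CC}\Ind{\CC\text{ is }\Delta\text{-good}}$, the sum over cycles $\CC$ of length in $[A^3/\Delta,\,A^3\Delta]$, expand the second moment, and peel off the diagonal:
\[
\E[\bm{\Omega}_\Delta^2]
=\E\bm{\Omega}_\Delta
+\sum_{\CC}\P(\CC\text{ good})\sum_{\CC'\ne\CC}\P(\CC'\text{ good}\mid\CC\text{ good}).
\]
The inner sum is exactly the quantity estimated in Lemma~\ref{l:conditional}, bounded there by $\E\bm{\Omega}_\Delta+o_\Delta(1)$ uniformly over $\CC$; inserting this and using $\sum_{\CC}\P(\CC\text{ good})=\E\bm{\Omega}_\Delta$ gives $\E[\bm{\Omega}_\Delta^2]\le(\E\bm{\Omega}_\Delta)^2+\E\bm{\Omega}_\Delta\,(1+o_\Delta(1))$, and hence $\text{Var}(\bm{\Omega}_\Delta)\lesssim\E\bm{\Omega}_\Delta\asymp\Delta$.

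The corollary then follows immediately, since $\{\bm{\Omega}_\Delta=0\}\subseteq\{|\bm{\Omega}_\Delta-\E\bm{\Omega}_\Delta|\ge\E\bm{\Omega}_\Delta\}$ and Chebyshev's inequality yields
\[
\P(\bm{\Omega}_\Delta=0)\le\f{\text{Var}(\bm{\Omega}_\Delta)}{(\E\bm{\Omega}_\Delta)^2}\lesssim\f{1}{\E\bm{\Omega}_\Delta}\asymp\f{1}{\Delta}\to0
\qquad\text{as }\Delta\to\infty.
\]
Granting Lemmas~\ref{l:large.moment} and~\ref{l:conditional}, this step is entirely routine, so there is no real obstacle at the level of the corollary itself; the substance is hidden in those two inputs --- above all in the overlapping-cycle combinatorics behind the variance estimate of Lemma~\ref{l:conditional}. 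The one point worth flagging here is bookkeeping: Lemma~\ref{l:conditional} must deliver an \emph{additive} $o_\Delta(1)$ correction to the independent-pairs contribution $\E\bm{\Omega}_\Delta$ (not merely a multiplicative constant), since that is precisely what makes $\text{Var}(\bm{\Omega}_\Delta)\lesssim\E\bm{\Omega}_\Delta$ --- of strictly smaller order than $(\E\bm{\Omega}_\Delta)^2$, as Chebyshev requires.
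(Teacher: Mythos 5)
Your proposal is correct and follows essentially the same route as the paper: Chebyshev's inequality combined with the first-moment estimate $\E\bm{\Omega}_\Delta\asymp\Delta$ (Lemma~\ref{l:large.moment}) and the conditional bound of Lemma~\ref{l:conditional}, which you correctly unpack into $\operatorname{Var}(\bm{\Omega}_\Delta)\lesssim\E\bm{\Omega}_\Delta$ so that $\P(\bm{\Omega}_\Delta=0)\lesssim 1/\Delta\to0$. Your closing remark about needing an additive (not merely multiplicative) correction in Lemma~\ref{l:conditional} is exactly the right point of emphasis.
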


\subsection{Conclusion}

We conclude this section with the proof of our main result.

\begin{proof}[Proof of Theorem~\ref{t:main}]
Theorem~\ref{t:first}
and Corollary~\ref{c:whp} together imply that
we can choose \s{C_1=C_1(\ep)} sufficiently large so that
	\[
	\P\left(\left.
	\begin{array}{c}
	c_\circ-C_1/(\log n)^3
	\le n\W_n
	\le c_\circ+C_1/(\log n)^3\\
	\text{and
	\s{(\log n)^2/C_1\le \LL_n
	\le C_1(\log n)^3}}
	\end{array}
	\,\right|\,n\W_n>1/e
	\right)\ge1-\ep.
	\]
Combining this with
Lemma~\ref{l:unif.cycles}
and Lemma~\ref{c:too.non.unif},
we can choose \s{C_2=C_2(\ep,C_1)} sufficiently large so that for any interval \s{I\subseteq[0,C_1]} with \s{|I| \le 1/C_2},
	\[
	\P\left(\begin{array}{c}
	\exists\text{ cycle }\CC:
	[\mw-c_\circ](\log n)^3
	\in I\\
	\text{or
	\s{\len(\CC)/(\log n)^3
	\in I}}
	\end{array}
	\right)\le\ep.
	\]
Combining these proves \eqref{e:main.len.wt} and \eqref{e:main.nondegenerate}.
\end{proof}

\section{Eigenfunctions of the one-minus-exp walk}\label{s:eig}

Recall that the one-minus-exp random walk is the real-valued stochastic process \s{(X_t)_{t\ge0}} whose jumps \s{X_{t+1}-X_t} are independent and identically distributed as \s{1-\Exp(1)}. In this section we study eigenfunctions of this walk with killing outside an interval \s{[0,H]}.

\subsection{Eigenfunctions and eigenvalues}\label{ss:eig.intro}
Consider the one-minus-exp walk killed outside \s{[0,H]}, and suppose that \s{g} is a left eigenfunction of this process with eigenvalue \s{\lm}: this means (see Remark~\ref{r:left.right}) that \s{g} is a function supported on \s{[0,H]}, not identically zero,
satisfying
	\beq\label{e:left.eigenvalue.eqn}
	\lm g(x) = e^{x-1}\int_0^H
		\Ind{u\ge x-1} g(u) e^{-u} \,du,\quad
		0\le x \le H.
	\eeq
As \s{g} is not identically zero,
\s{\set{x : g(x)\ne0}} is a non-empty subset of \s{[0,H]},
and we denote its infimum by \s{x_{\min}}. We then have
	\[\lm g(x) e^{1-x} = \int_0^H g(u) e^{-u}\,du,
	\quad 0\le x \le \min\set{H,1+x_{\min}}.\]
--- including for some 
\s{x_{\min} \le x \le \min\set{H,1+x_{\min}}}
for which \s{g(x)\ne0}. If the integral on the right-hand side vanishes, then \s{\lm=0}; otherwise \s{\lm\ne0}, \s{x_{\min}=0}, and (by rescaling) \s{g(x) = e^x} for \s{0\le x\le \min\set{H,1}}.
In particular, for \s{H\le1} we conclude the process has a unique non-zero eigenvalue \s{\lm=H/e} with associated  left eigenvector \s{g(x) = e^x \Ind{0\le x\le H}}.

We see from \eqref{e:left.eigenvalue.eqn}
that \s{g} is smooth on \s{(0,H)\setminus\Z},
with continuous derivatives up to order \s{k}
on \s{(0,H)\setminus\Z_{\le k}}. We can therefore differentiate \eqref{e:left.eigenvalue.eqn} to find
	\beq\label{eq:g-ODE}
	\lm g'(x)
	= \lm g(x) - g(x-1),\quad 1<x<H.
	\eeq
Take for the moment \s{H=\infty}. We can solve for \s{g} on the intervals
\s{[k,k+1]} (\s{k\in\Z_{\ge1}}) one at a time, as follows: if \s{g} satisfies \eqref{eq:g-ODE}, then
\s{h} satisfies \s{h'(x) = g(x-1)/\lm}
	for all \s{x>1}, where
	\beq\label{e:from.g.to.h}
	h(x)
	\equiv -g(x) + \int_1^x g(u)\,du,\quad x\ge1.
	\eeq
Knowing \s{g(x)} for \s{1\le x\le \overline{x}}
determines \s{h(x)} for \s{1\le x\le \overline{x}};
the reverse also holds since
	\[
	0=-g(x) + \int_1^x g(u)\,du,
	\quad 0\le x\le\overline{x}
	\]
has only the trivial solution \s{g=0}.
Suppose inductively that \s{g} (hence \s{h}) has been determined on \s{[0,k]} for \s{k\in\Z_{\ge1}}: then
we can determine \s{h} on the next interval \s{[k,k+1]}
by evaluating
	\[
	h(x) = h(k)+ \lm^{-1} \int_{k-1}^{x-1} g(u)\,du,
	\quad
	k\le x \le  k+1,
	\]
which determines \s{g} on \s{[k,k+1]}. This proves that in the case \s{H=\infty}, there is a unique continuous function \s{g= g_\lm} supported on \s{[0,\infty)} that satisfies \s{g(x)=e^x} for \s{0\le x\le1}, and satisfies \eqref{eq:g-ODE} for \s{x>1}. It is straightforward to verify that
	\beq\label{eq:g}
	g_\lm(x) \equiv e^x \sum_{k=0}^{\lceil x\rceil-1}
		\f{(x-k)^k}{(-\lm e)^kk!},\quad x\ge0
	\eeq
is such a function, so it must be the unique one.

\begin{figure}[h!]
\begin{center}
\includegraphics[width=0.45\textwidth]{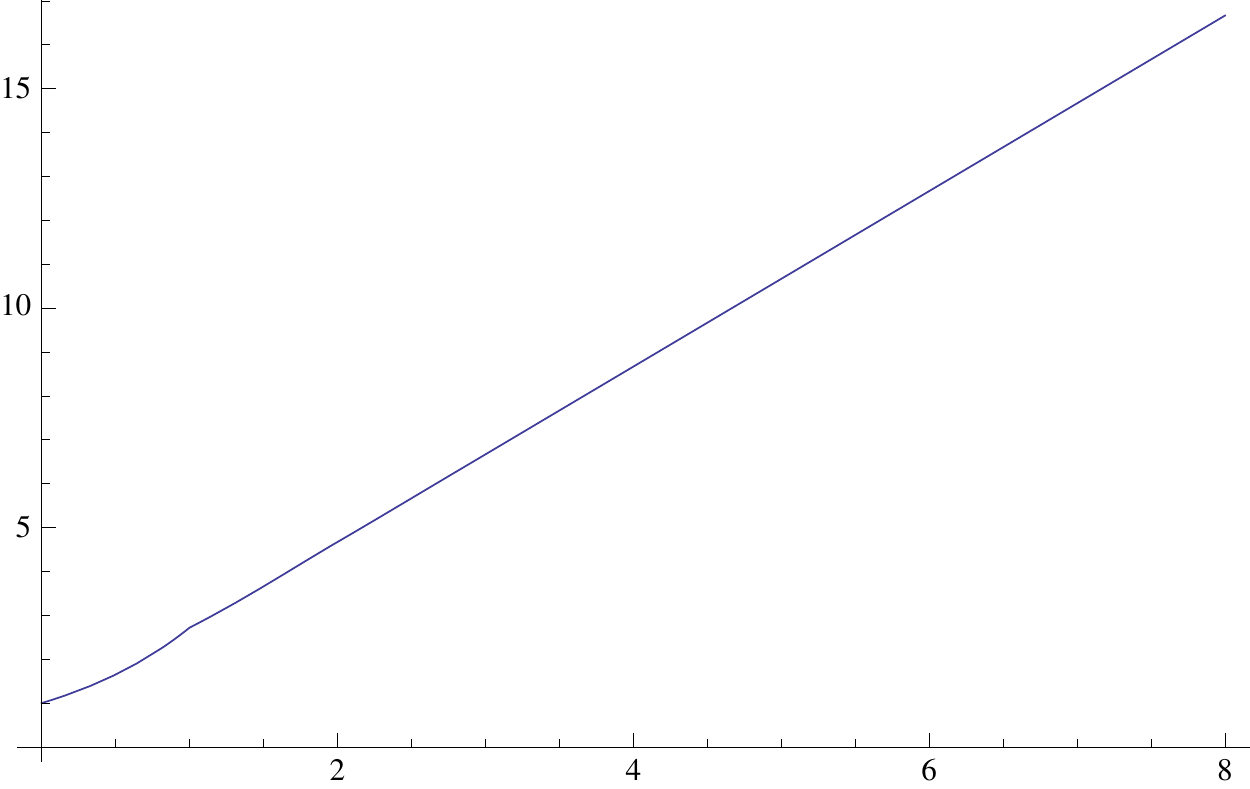}\hfill\includegraphics[width=0.45\textwidth]{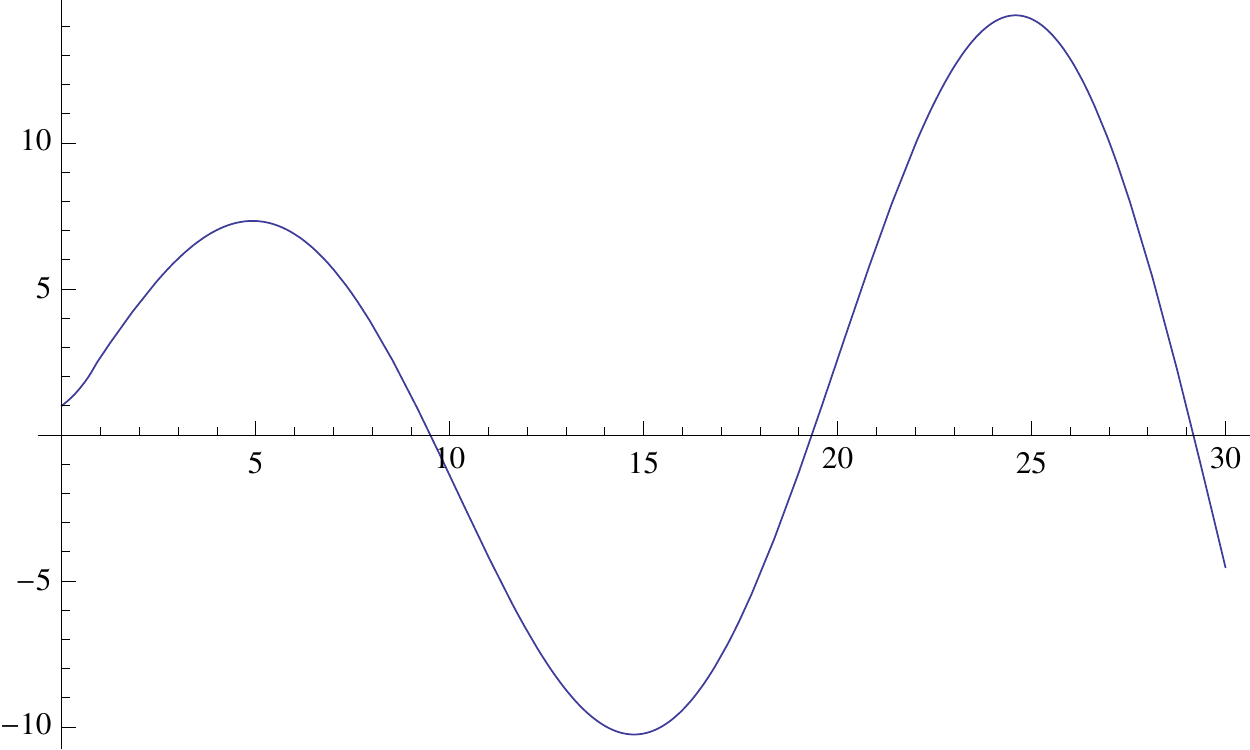}
\end{center}
\caption{The left plot shows $g_1(x)$, and the right plot shows $g_{0.95}(x)$.}
\label{fig:g_lambda(x)}
\end{figure}

If for finite \s{H} and \s{\lm\ne0}, \s{g} is a solution of \eqref{e:left.eigenvalue.eqn}, then \s{g} satisfies \eqref{eq:g-ODE}, and it follows from the preceding discussion that \s{g} must be the restriction of \s{g_\lm} to \s{[0,H]}. We calculate
	\[
	\f{1}{\lm e}
	\int_a^b g_\lm(u) e^{-u}\,du
	= \sum_{k=1}^{\lceil a\rceil}
		\f{(a+1-k)^{k}}{(-\lm e)^k k!}
	-\sum_{k=1}^{\lceil b\rceil}
		\f{(b+1-k)^{k}}{(-\lm e)^k k!}
	= \f{g_\lm(a+1)}{e^{a+1}}
		-\f{g_\lm(b+1)}{e^{b+1}},
	\]
so equation \eqref{e:left.eigenvalue.eqn} with \s{g=g_\lm}
and \s{1\le x\le H} simplifies to
	\[
	0
	= \f{g_\lm(x)}{e^x}
	-\f{1}{\lm e}\int_{x-1}^H g_\lm(u) e^{-u}\,du
	= \f{g_\lm(H+1)}{e^{H+1}},
	\]
therefore \s{g_\lm(H+1)=0}
which is a polynomial equation in \s{\lm} of degree \s{\lceil H\rceil}
(Fig.~\ref{fig:lambdas(H)}).

\begin{figure}[t!]
\begin{center}
\includegraphics[width=0.5\textwidth]{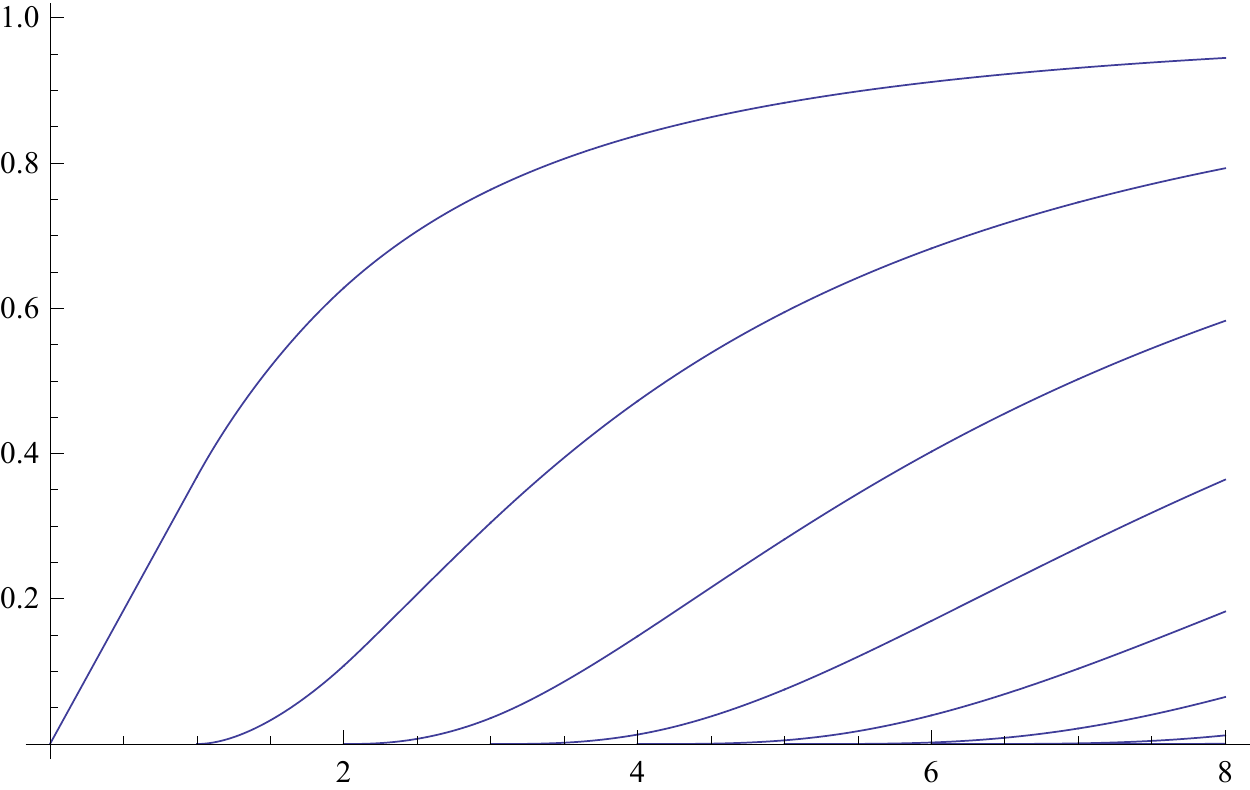}
\end{center}
\caption{The eigenvalues of the walk with two-sided killing as a function of \s{H}.}
\label{fig:lambdas(H)}
\end{figure}

\begin{rmk}\label{r:left.right}
We comment briefly on left and right eigenfunctions of the process. Formally,
the operator for the 
one-minus-exp walk \s{X_t} killed outside \s{[0,H]}
is given by
	\[
	(Kf)(x)
	\equiv \E_x[ f(X_1) \Ind{0\le X_1\le H}]
	= \int_0^H f(y) p(x,y) \,dy,\]
where \s{p} is the transition kernel from \s{x} to \s{y}.
The left action of \s{K} is the right action of the adjoint operator, \s{(K^\star f)(y) = (fK)(y)}. In the current setting, \s{p(x,y) = k(y-x)} where \s{k} is the density function of the \s{1-\Exp(1)} random variable. We therefore have
	\[
	(K^\star f)(x)
	= \int_0^H f(y) k(x-y)\,dy
	= \int_0^H f(H-y) k(y-(H-x))\,dy
	= (R_H KR_H f)(x),
	\]
where  \s{R_H f(x) \equiv f(H-x)}. The reflection \s{R_H} is involutive, and it relates the left and right eigenfunctions of \s{K}: \s{g} is a left eigenfunction of \s{K} (i.e.\ a right eigenfunction of \s{K^\star}) if and only if \s{R_H g} is a right eigenfunction of \s{K} with the same eigenvalue. If \s{f_1,f_2} are right eigenfunctions of \s{K} with non-zero eigenvalues \s{\lm_1,\lm_2} then, writing \s{\langle\cdot,\cdot\rangle} for the \s{L^2[0,H]} inner product,
	\[
	\langle R_H f_1,f_2 \rangle
	= (\lm_2)^{-1} \langle R_H f_1, K f_2 \rangle
	= (\lm_2)^{-1} \langle K^\star R_H f_1, f_2 \rangle
	= (\lm_1/\lm_2)\langle R_H f_1,f_2 \rangle.
	\]
Consequently, if \s{\lm_1\ne\lm_2} then
\s{\langle R_H f_1,f_2 \rangle=0}.
\end{rmk}

\subsection{Series expansion}
We now review the solution obtained by Wright~\cite{wright} for general homogeneous
difference-differential equations
with constant coefficients,
	\beq\label{e:Lambda.equals.zero}
	\Lm(y)(x)
	\equiv\sum_{\mu=0}^m\sum_{\nu=0}^n
	a_{\mu\nu} y^{(\nu)}(x+b_\mu)=0, \quad
	a_{\mu\nu}\in\C, \
	0=b_0<b_1<\ldots<b_m,
	\eeq
where \s{y^{(\nu)}} denotes the \s{\nu}-th derivative of \s{y}. Observe that
	\[
	\Lm(e^{sx})
	= e^{sx} \tau(s)  \text{ where }
	\tau(s)\equiv\sum_{\mu=0}^m\sum_{\nu=0}^n
		a_{\mu\nu} e^{b_\mu s} s^\nu,
	\]
so if \s{\tau(s)=0} then \s{y(x)=e^{s x}} solves \s{\Lm(y)=0}. More generally, we can apply the formula
\s{(fg)^{(\nu)}
= \sum_{r=0}^\nu\binom{\nu}{r} f^{(r)} g^{(\nu-r)}}
to calculate
	\[
	\Lm(x^\ell e^{sx})
	= e^{sx}
	\sum_{\mu=0}^m\sum_{\nu=0}^n
	a_{\mu\nu}
	\sum_{r\ge0}
		\f{(\nu)_r (\ell)_r }{r!} 
		e^{b_\mu s}
		s^{\nu-r}
		(x+b_\mu)^{\ell-r},
	\]
where for \s{r\in\Z_{\ge0}},
 \s{(j)_r} denotes the falling factorial
\s{j(j-1)\cdots(j-r+1)}, which is zero if \s{r-j\in\Z_{\ge1}}. Expanding in powers of \s{x} gives
	\[
	\Lm(x^\ell e^{sx})
	= e^{sx}
	\sum_{j=0}^\ell
	\binom{\ell}{j}
	x^j
	\underbracket{
	\sum_{\mu=0}^m\sum_{\nu=0}^n
	a_{\mu\nu}
	\sum_{r\ge0}
	\f{(\nu)_r(\ell-j)_r}{r!}
	e^{b_\mu s} s^{\nu-r} (b_\mu)^{\ell-j-r}
	}_{\tau^{(\ell-j)}(s)}.
	\]
If \s{s} is a root of \s{\tau} of order \s{\ell+1},
meaning \s{0=\tau(s)=\tau'(s)=\ldots=\tau^{(\ell)}(s)},
then \s{\Lm(x^j e^{sx})=0} for the integers \s{0\le j\le \ell}.

Assume that \s{m,n\ge1}, and that the \s{m\times n} coefficient matrix \s{a_{\mu\nu}} contains a non-zero entry in each of the first and last rows and columns, which eliminates pure difference equations and pure differential equations. Under these assumptions, it is shown \cite{wright} that the general solution to \eqref{e:Lambda.equals.zero} is given by a limit of linear combinations of the solutions \s{x^j e^{s x}} described above; further, it is explained how to compute the coefficients of this linear combination given initial data \s{y^{(\nu)}(0)} (\s{0\le \nu<n}) and \s{y^{(n)}(x)} (\s{0\le x\le b_m}). Let
	\begin{align*}
	\bm{H}_1(s)
	&\equiv \sum_{\mu=1}^m \sum_{\nu=0}^n
		a_{\mu\nu} e^{b_\mu s}
		\int_0^{b_\mu}
		y^{(\nu)}(u) e^{-su}\,du,\\
	\bm{H}(s)&\equiv \bm{H}_1(s)
	+ \sum_{\mu=0}^m\sum_{\nu=1}^n
		a_{\mu\nu} e^{b_\mu s}
		\sum_{\iota=1}^{\nu}
		s^{\nu-\iota}
		y^{(\iota-1)}(0),
	\end{align*}
and let \s{F_x(s) \equiv  e^{sx}\bm{H}(s)/\tau(s)},
regarded as a function of \s{s\in\C}.
It is then shown that
	\beq\label{e:wright.formula}
	y(x) = \sum_{s} \res_{s} F_x
	\eeq
where \s{\res_{s} F_x} denotes the complex residue of \s{F_x} at a pole \s{s\in\C}, and the sum is taken over all poles \s{s}. To ensure convergence, the poles are arranged in an appropriate order \cite{wright} which is somewhat delicate in general; in our particular setting we will find below that the summation is absolutely convergent.

To avoid the singularity of \s{g'} at zero, let \s{y(x) \equiv g_\lm(x+\ep)}, so \s{y} satisfies
	\[
	\lm^{-1}y(x) - y(x+1) + y'(x+1)=0,\quad
	-\ep<x< \infty. 
	\]
This clearly corresponds to \eqref{e:Lambda.equals.zero}
with \s{m=n=1},
	\beq\label{eq:tau}
	a_{\mu\nu} = \bordermatrix{
	 & \scriptstyle\nu{=}0 & \scriptstyle\nu{=}1 \cr
	\scriptstyle\mu{=}0 & 1/\lambda & 0 \cr
	\scriptstyle\mu{=}1 & -1 & 1
	},\quad
	b_\mu = \bordermatrix{
	 & \cr
	\scriptstyle\mu{=}0 & 0 \cr
	\scriptstyle\mu{=}1 & 1 
	},\quad
	\tau(s)=1/\lambda - e^s + s e^s.
	\eeq
Note that \s{\tau'(s)=e^s s} and \s{\tau''(s)=e^s(s+1)}, so if \s{\tau(s)=0} then \s{s} is a simple root unless \s{s=0}, in which case \s{\lm=1} and \s{s=0} is a double root. From \eqref{eq:g} we have initial data
	\[
	y(x) = g_\lm(x+\ep)
	= \left\{\hspace{-4pt}\begin{array}{ll}
	e^{x+\ep}
		&\text{for } x+\ep\in[0,1],\\
	e^{x+\ep}[1-(x+\ep-1)/(\lm e)]
		&\text{for } x+\ep\in[1,2],
	\end{array}\right.
	\]
which we use to evaluate
	\[
	\bm{H}_1(s)
	= e^s\int_0^1 [-y(u)+y'(u)] e^{-su}\,du
	= \f{-(e^\ep-e^{\ep s})}{\lm(1-s)}, \quad
	\bm{H}(s)
	= \bm{H}_1(s) + e^{s+\ep},
	\]
where \s{\bm{H}_1(s)|_{s=1}} is understood to be \s{-\ep e^\ep}. If \s{\tau(s)=0} then \s{\bm{H}(s) = e^{(1+\ep)s}\ne0}, so the poles of \s{F_x(s) \equiv e^{sx}\bm{H}(s)/\tau(s)} correspond precisely to the zeroes of \s{\tau(s)}. We compute
	\[
	\res_s F_x
	= \left\{\begin{array}{ll}
	\displaystyle \lim_{z\to s} (z-s) F_x(z)
	= \f{e^{sx} \bm{H}(s)}{\tau'(s)}
	= \f{e^{(x+\ep)s}}{s}
		&\text{if \s{s} is simple pole,}\\
	\displaystyle
	\lim_{z\to 0} \f{d}{dz} [ z^2 F_x(z) ]
	= 2(x+\ep+1/3)
		&\text{if
			\s{\lm=1,s=0}}
	\end{array}\right.
	\]
(recall from above that these are the only two cases
for roots of \s{\tau}). Applying \eqref{e:wright.formula} gives
	\beq\label{e:g.lambda.sum.over.poles}
	g_\lm(x)
	=\sum_{\substack{s\in\C:\\
		\tau(s)=0}}
	\f{e^{sx}}{s}
	\text{ for }\lm\in(0,1), \quad
	g_1(x)
	= 2(x+1/3) 
	+ \sum_{\substack{s\in\C\setminus\set{0}:\\
		\tau(s)=0}}
	\f{e^{xs}}{s},\eeq
modulo issues of convergence to be addressed 
in the next subsection.

\subsection{Roots of \texorpdfstring{$\tau$}{\texttau} and Lambert \texorpdfstring{$W$}{W} function}

The roots of \s{\tau} can be expressed in terms of the Lambert \s{W} function, which has been very well studied and which we now briefly review.\footnote{This function also arose in the analysis \cite{mathieu-wilson} of the subcritical regime of the minimum mean-weight cycle,
for seemingly different reasons. It figures prominently in the analysis of random graphs near the phase
transition, e.g.\ \cite{JKLP}; see \cite[\S2]{Lambert-W} for additional applications. Some of the discussion in this section is adapted from~\cite{mathieu-wilson}.} The number of rooted spanning trees of the complete \s{n}-vertex graph is \s{n^{n-1}} (Cayley's formula). The \emph{tree function} \s{T} is the associated exponential generating function,
	\[ T(z)
	\equiv \sum_{n=1}^\infty
		\f{n^{n-1}}{n!} z^n.
	\]
From Stirling's formula (\s{n!\sim \sqrt{2\pi n}(n/e)^n}), the sum converges for all $|z|\leq 1/e$. It satisfies the relation \s{T(z) = z\exp\{ T(z)\}} (see e.g.\ \cite[Proposition~5.3.1]{stanley}). The \emph{Lambert \s{W} function} is defined by the equation
	\beq\label{e:def.Lambert.W}
	z = W(z) \, e^{W(z)}.
	\eeq
This is a multivalued function, with branches \s{W_k} naturally indexed by the integers \s{k\in\Z}; see \cite[\S4]{Lambert-W} and Fig.~\ref{f:lambert.w}.

\begin{figure}[h]
\includegraphics[width=0.5\textwidth]{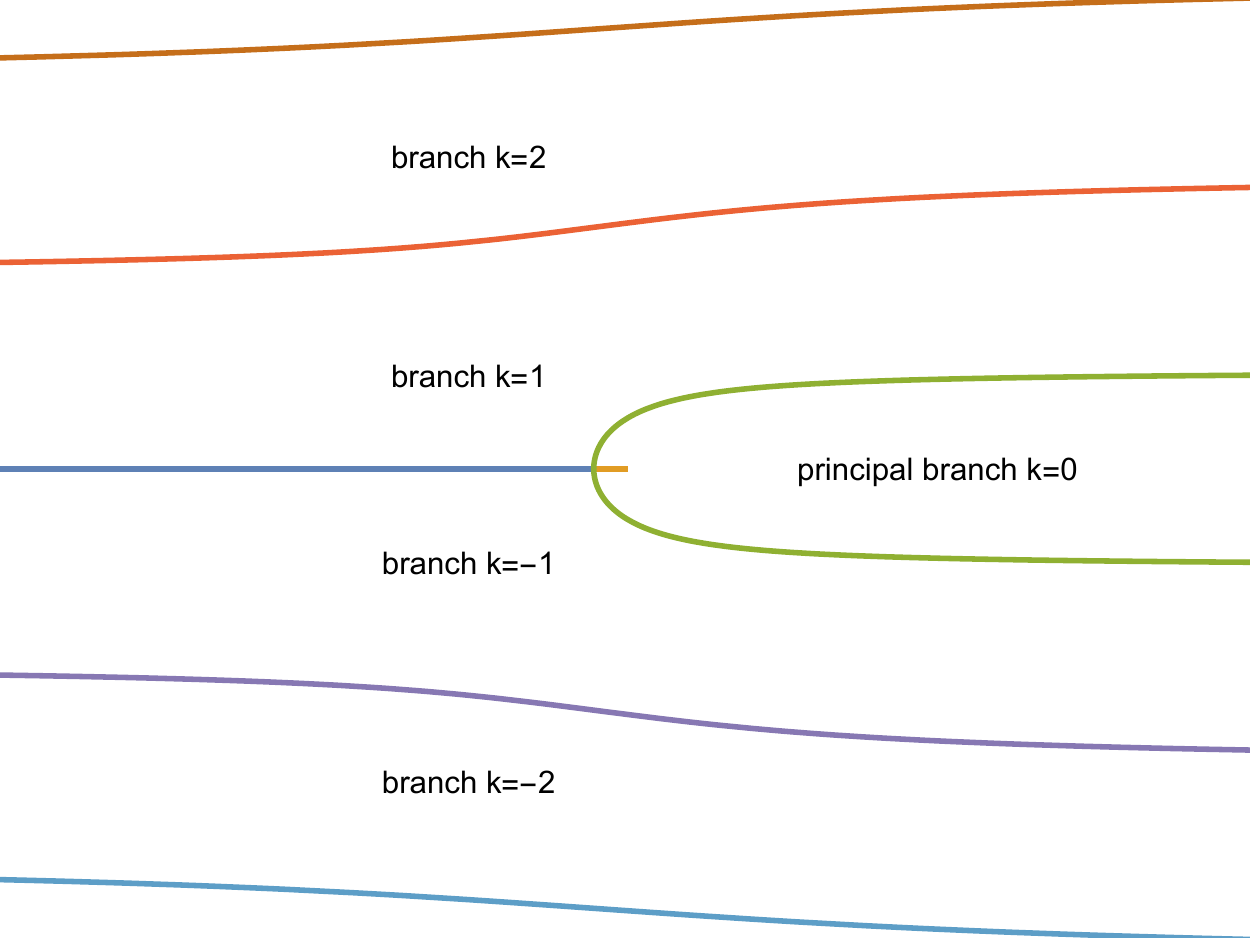}
\caption{(This is a duplicate of \cite[Fig.~4]{Lambert-W}.) The Lambert \s{W} function is defined as the (multi-valued) inverse of the function \s{f(w) = w e^w}. The curves in the figure show the preimage set \s{f^{-1}(\R_{<0})} --- they are a subset of \s{\R\cup\set{w:\xi=-\eta\cot\eta}} where \s{\xi\equiv\real w} and \s{\eta\equiv\imag w}. The curves naturally partition the \s{w}-plane into branches indexed by \s{k\in\Z} (the \s{k}-th branch is the image of \s{W_k}). Following convention \cite{Lambert-W}, branches are defined to be closed in the direction of increasing \s{\imag w}.}
\label{f:lambert.w}
\end{figure}

The principal branch \s{W_0} satisfies \s{W_0(z) = -T(-z)} for \s{|z|\leq 1/e}, and can be defined elsewhere by analytic continuation.
It is straightforward to check that \s{T(1/e) = 1}.
For \s{\delta\in\C} with \s{|e^{-\delta}|\le1}, we can use \s{T(1/e) = 1} and the relation \s{T(z) = z\exp\{ T(z)\}} to deduce that
	\beq\label{eq:treefunctionatcritical}
	T(1/e^{1+\delta})
	= 1 - \sqrt{2\delta} + O(\delta).
	\eeq
The branch \s{W_0} has a cut on \s{z\in(-\infty,-1/\ee)}; and in view of \eqref{eq:treefunctionatcritical}, it has a  square-root-type singularity near \s{z=-1/\ee}. 
Any other branch \s{W_k} (\s{k\ne0}) 
has a cut on \s{z\in(-\infty,0)} with a logarithmic singularity near \s{z=0}.

Recalling \eqref{eq:tau}, the solutions of \s{\tau(s)=0} are given by
	\beq\label{e:def.s.k.using.W.k}
	s_k=1+w_k,
	\quad w_k\equiv W_k(-1/(\ee\lambda)),
		\quad k\in\Z.\eeq
For \s{\lm\in(0,1]}, \s{s_k} is obtained by evaluating \s{W_k} precisely on the branch cut. Following the convention that branches are closed in the direction of increasing \s{\imag w} (\cite{Lambert-W}, see also Fig.~\ref{f:lambert.w}), we see that \s{s_k} and \s{s_{-k-1}} are complex conjugates for each \s{k\ge0}. When \s{\lm=1}, \s{s_0} and \s{s_{-1}} are obtained by evaluating \s{W_0} and \s{W_{-1}} at the branch point \s{-1/e}, giving the double root \s{s_0=s_{-1}=0}. There is a convergent series expansion \cite[eqn.~4.20]{Lambert-W} for each branch \s{W_k}; truncating the series and evaluating at \s{-1/(\ee\lambda)} gives, for positive integers \s{k},
	\beq\label{eq:s_k-asymptotic}
	\begin{array}{rl}
	s_k
	\hspace{-6pt}&=
	-\log[\pi\lm(4k+1)/2]
 	+i\pi(4k+1)/2
	+ O(1+k^{-1}{\log k}), \vspace{2pt}\\
	s_{-k-1}
	\hspace{-6pt}&=
	-\log[\pi\lm(4k+1)/2]
 	-i\pi(4k+1)/2
	+ O(1+k^{-1}{\log k}).
	\end{array}\eeq
Thus, for any \s{x\in\R} there exists a finite constant \s{C_x} such that \s{|\exp\{s_k x\}/s_k| \le C_x/|k|^{1+x}} for all \s{k\in\Z}, excluding the case \s{s_0=s_{-1}=0}. It follows that for \s{x>0}, the summations in \eqref{e:g.lambda.sum.over.poles} are absolutely convergent as claimed.

\begin{figure}[t!]
\begin{center}
\includegraphics[width=0.5\textwidth]{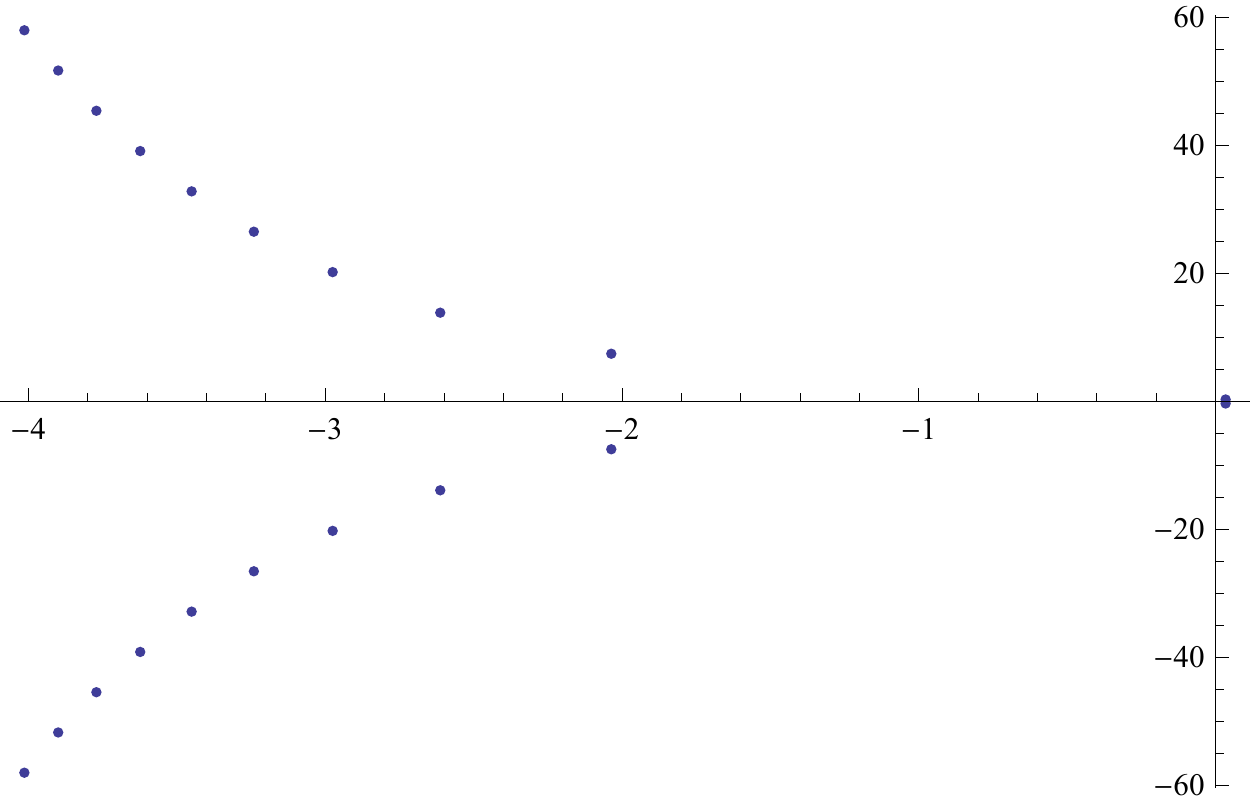}
\end{center}
\caption{Complex roots of \s{\tau(s)
	=1/\lambda - e^s(1-s)},
	shown for \s{\lambda=0.95}.}
\label{fig:roots-alpha}
\end{figure}

\begin{figure}[h]
\begin{center}
\includegraphics[width=0.5\textwidth]{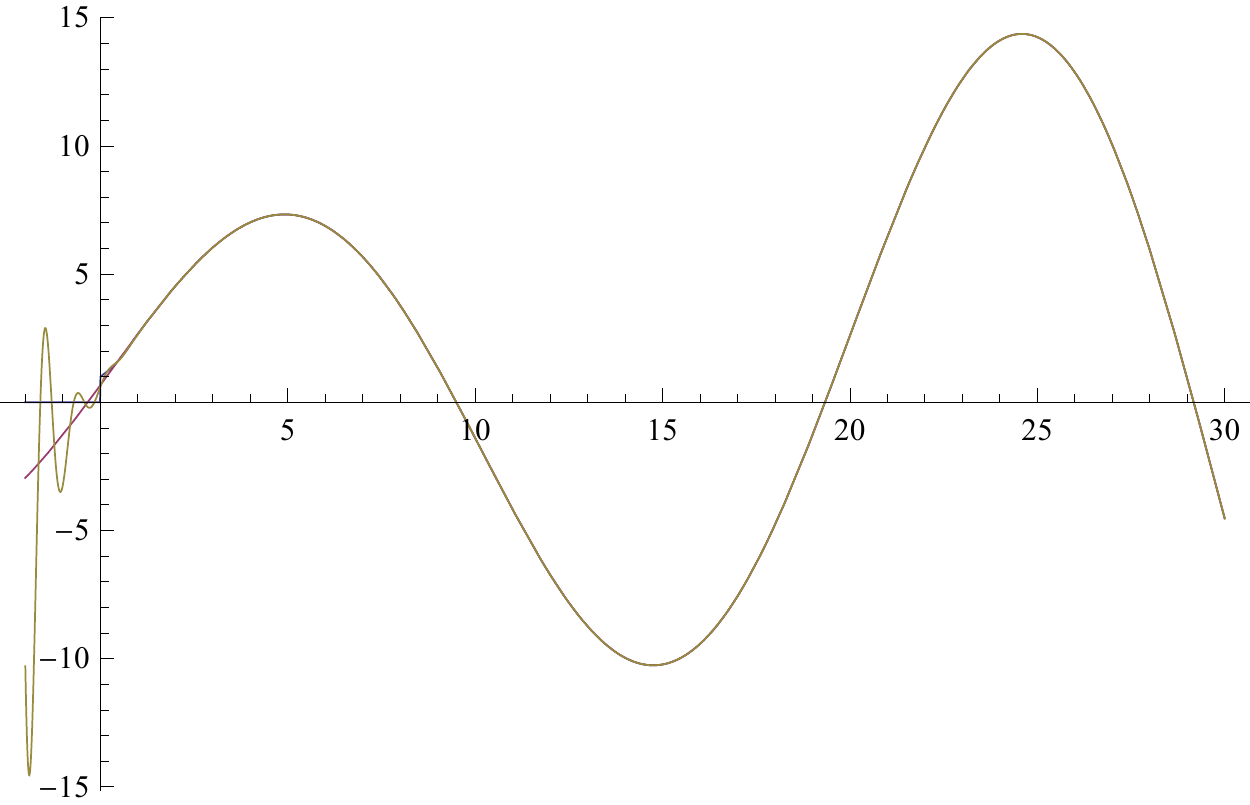}
\end{center}
\caption{Taking \s{\lm=0.95}, the figure shows
\s{g_{0.95}(x)} with its one-term and two-term approximations in the series expansion~\eqref{eq:g-series} (i.e., using the rightmost or two rightmost conjugate pairs of \s{s_k} from Fig.~\ref{fig:roots-alpha}).  The three curves are nearly indistinguishable
except for negative or very small~$x$.
}
\label{fig:series-x}
\end{figure}

\subsection{Asymptotics for \texorpdfstring{$\lambda$}{\textlambda} near 1}

We now extract the asymptotics of \s{g_\lm(x)} when \s{\lm\in(0,1]} is close to \s{1},
and \s{x} is large.
Recalling that \s{s_k} and \s{s_{-k-1}} are complex conjugates for \s{k\in\Z_{\ge0}}, we can re-express \eqref{e:g.lambda.sum.over.poles} as
	\beq\label{eq:g-series}
	g_\lm(x)
	= f_\lm(x) + 2\sum_{k\ge1} \real(e^{s_k x}/s_k),
	\quad
	f_\lm(x)
	= \left\{\hspace{-4pt}\begin{array}{ll}
	2\real(e^{s_0x}/s_0) & \text{if }\lm\in(0,1),\\
	2(x+1/3) & \text{if }\lm=1.
	\end{array}\right.
	\eeq
It is strongly suggested by \eqref{eq:s_k-asymptotic} and Fig.~\ref{fig:roots-alpha} that \s{f_\lm(x)} is a good approximation to \s{g_\lm(x)} in the limit of large positive \s{x}. We shall prove for \s{\lm} sufficiently near \s{1} that this is indeed true, which amounts to proving that \s{\real s_k} is strictly maximized over \s{k\in\Z_{\ge0}} at \s{k=0}.
Recalling \eqref{e:def.s.k.using.W.k},
it is clearly equivalent to prove
that \s{\real w_k} is strictly maximized over \s{k\in\Z_{\ge0}} at \s{k=0}.
For \s{\lm} near \s{1}, \s{w_0} lies near \s{W(-1/e)=-1}.
It is clear from the definition of the branch cuts
(\cite[\S4]{Lambert-W} and Fig.~\ref{f:lambert.w}) that
for \s{\lambda\in(0,1]} and \s{k\ge0},
	\beq\label{Im-W_k-range}
	\imag w_k \in [2k\pi,(2k+1)\pi)\,,
	\eeq
so \s{|w_k|\ge2\pi} for \s{k\ge1}. Combining with
\eqref{e:def.Lambert.W} gives
	\[
	\real w_k = -\log|e\lm|-\log|w_k|
	\le -1-\log(2\pi)-\log|\lm|
	\le -2
	\text{ for }
	|\lm|\text{ near }1\,.
	\]
Combining with \eqref{eq:s_k-asymptotic},
we see that there is a finite constant \s{C}
such that for all \s{\lm} in a neighborhood of \s{1},
\s{|g_\lm(x) - f_\lm(x)|\le C e^{-2x}}
for all \s{x>0}.

We next approximate \s{f_\lm(x)} for \s{\lm = e^{-\delta}}, \s{\delta} a small positive real. Taking \s{z=-1/(e\lm)} and \s{p=\sqrt{2(1+ez)}},
for \s{|p|<\sqrt{2}} there is a convergent series expansion \cite[eqn.~(4.22)]{Lambert-W}
	\[
	s_0=1+W_0(z)
	= \sum_{\ell\ge1}\mu_\ell p^\ell
	= p-\f13 \, p^2 + \f{11}{72} \, p^3 + O(p^4)\,.
	\]
Since \s{p= i\sqrt{2(e^\delta-1)} = i \sqrt{2\delta}[1+O(\delta)]}, the first term in the expansion matches what we have already noted in \eqref{eq:treefunctionatcritical}. Therefore
	\[
	\real s_0 = (2\delta/3)[1+O(\delta)]\,,
	\quad\quad
	\imag s_0 = \sqrt{2\delta}[1+O(\delta)]\,,
	\]
and as a result, for \s{\lm=e^{-\delta}}
with \s{\delta>0} small,
	\[
	\begin{array}{rl}
	f_\lm(x) 
	\hspace{-6pt}&= 2\exp\{(\real s_0)x\}
	\sin\Big[ (\imag s_0)x + \arctan[(\real s_0)/(\imag s_0)] \Big] \Big/ |s_0|
	\\
	\hspace{-6pt}&= 2\exp\{ (2\delta/3)x[1+O(\delta)]\}
	\sin\Big[
	[1+O(\delta)]
	\sqrt{2\delta}(x+1/3)
	\Big] \Big/ \sqrt{2\delta}\,.
	\end{array}
	\]
In particular, taking \s{\delta\downarrow0} with \s{x} fixed, we recover  \s{f_1(x) = 2(x+1/3)}.

Lastly we identify the value of \s{H} for which \s{g_\lm},
\s{\lm=e^{-\delta}}, gives the principal eigenfunction.
Recalling the discussion of \S\ref{ss:eig.intro}, \s{H+1}
corresponds to the smallest positive zero of \s{g_\lm}.
From the above, \s{f_\lm(x_0)=0} for
	\[
	x_0 \equiv \f{\pi-\arctan[(\real s_0)/(\imag s_0)]}
		{\imag s_0} 
	= \f{\pi}{\sqrt{2\delta}} - \f13 
	+ O(\sqrt{\delta})\,.
	\]
Recalling that \s{f_\lm} approximates \s{g_\lm}, we can write
	\[
	\begin{array}{rl}
	g_\lm(x_0+x)
	\hspace{-6pt}&= O(\exp\{-\Omega(x_0+x)\})
		+ f_\lm(x_0+x)\\
	\hspace{-6pt}&=O(\exp\{-\Omega(x_0+x)\})
		+ 2\exp\{(\real s_0)(x_0+x)\}
		\sin[ (\imag s_0) x ]\,.
	\end{array}
	\]
From this we see that we can choose \s{x_\star=x_0 + \exp\{-\Omega(x_0)\}} and a sufficiently large constant \s{x'} such that \s{g_\lm} has a root at \s{x_\star}, and is non-vanishing between \s{x'} and \s{x_\star}.
To rule out zeroes on \s{[0,x']}, note that \s{g_1} is non-zero on all of \s{\R_{\ge0}}, since no finite \s{H} has an eigenvalue of \s{1}. It follows by continuity in \s{g_\lm} that for \s{\lm} sufficiently near \s{1}, \s{g_\lm} is non-vanishing on \s{[0,x']}. Therefore
	\[ 
	H = x_\star-1
	= \f{\pi}{\sqrt{2\delta}} - \f43 
	+ O(\sqrt{\delta})\,, \quad\quad
	\delta=
	\f{\pi^2}{2}\f{1+O(1/H^2)}{(H+4/3)^2}\,.
	\]

\bibliographystyle{alphaabbr}
\bibliography{cycles}

\end{document}